\DeclareFontFamily{U}{mathb}{\hyphenchar\font45}
\DeclareFontShape{U}{mathb}{m}{n}{ <5> <6> <7> <8> <9> <10> gen *
mathb <10.95> mathb10 <12> <14.4> <17.28> <20.74> <24.88> mathb12
}{} \DeclareSymbolFont{mathb}{U}{mathb}{m}{n}
\DeclareMathSymbol{\llcurly}{3}{mathb}{"CE}
\DeclareMathSymbol{\ggcurly}{3}{mathb}{"CF}
\DeclareFontFamily{U}{matha}{\hyphenchar\font45}
\DeclareFontShape{U}{matha}{m}{n}{
      <5> <6> <7> <8> <9> <10> gen * matha
      <10.95> matha10 <12> <14.4> <17.28> <20.74> <24.88> matha12
      }{}
\DeclareSymbolFont{matha}{U}{matha}{m}{n}
\DeclareMathSymbol{\curlywedge} {2}{matha}{"4E}
\DeclareMathSymbol{\curlyvee} {2}{matha}{"4F}
\title{Local and global stability of equilibria for a class of chemical reaction networks}
\author{Pete Donnell\footnotemark[1] 
\and Murad Banaji\footnotemark[1] \footnotemark[2] }
\begin{document}

\maketitle

\renewcommand{\thefootnote}{\fnsymbol{footnote}}

\footnotetext[1]{Department of Mathematics, University of Portsmouth, Lion Gate Building, Lion Terrace, Portsmouth, Hampshire PO1 3HF, UK.}
\footnotetext[2] {corresponding author: {\tt murad.banaji@port.ac.uk}}

\begin{abstract}
A class of chemical reaction networks is described with the property that each positive equilibrium is locally asymptotically stable relative to its stoichiometry class, an invariant subspace on which it lies. The reaction systems treated are characterised primarily by the existence of a certain factorisation of their stoichiometric matrix, and strong connectedness of an associated graph. Only very mild assumptions are made about the rates of reactions, and in particular, mass action kinetics are not assumed. In many cases, local asymptotic stability can be extended to global asymptotic stability of each positive equilibrium relative to its stoichiometry class. The results are proved via the construction of Lyapunov functions whose existence follows from the fact that the reaction networks define monotone dynamical systems with increasing integrals. 
\end{abstract}

\begin{keywords}
Chemical reactions; stability; monotone dynamical systems; DSR graph
\end{keywords}

\begin{AMS}
37C65; 80A30; 34C12; 37B25; 05C90
\end{AMS}

\section{Introduction}

Systems of chemical reactions can give rise to dynamical systems of various kinds (discrete or continuous time, discrete or continuous state, stochastic or deterministic, for example) and displaying a variety of behaviours \cite{erdi}. Perhaps most widely studied are models whose evolution is naturally described by systems of ordinary differential equations, namely deterministic, continuous time, spatially homogeneous models where chemical concentrations take nonnegative real values. A broad question of interest is when models of some {\bf chemical reaction network} (CRN) can be shown to allow, or forbid, certain behaviours for all reasonable choices of chemical reaction rates (kinetics). The attempt to make claims about the behaviour of CRNs which are to some degree independent of choices of kinetics is often termed {\bf chemical reaction network theory} (CRNT). Since the pioneering work of Feinberg \cite{feinberg0} and Horn and Jackson \cite{hornjackson}, there has been considerable progress in various directions, including the discovery of structural features of networks associated with multistationarity, oscillation, and the persistence of solutions. 

Much, though not all, work in CRNT has focussed on reaction networks with mass action kinetics, but unknown rate constants, namely on particular polynomial differential equations with unknown parameters. Here we construct a class of CRNs which can be proved to have strong convergence properties with weaker assumptions on the kinetics. First note that the evolution of a CRN quite naturally takes place on certain invariant convex sets termed stoichiometry classes (to be defined below). The basic convergence properties of the networks we describe are:

\begin{enumerate}
\item No more than one positive equilibrium on each stoichiometry class, and local asymptotic stability of each positive equilibrium on its stoichiometry class.
\item Under additional assumptions, global asymptotic stability of a unique positive equilibrium on each nontrivial stoichiometry class.
\end{enumerate}

The precise meaning of these statements will be clarified below. The results will be proved using the theory of monotone dynamical systems \cite{hirschsmith,halsmith}. There is a considerable intersection between this theory and the study of CRNs, reflecting the fact that CRNs fairly frequently give rise to order-preserving dynamical systems -- see \cite{volpert,kunzesiegelmathchem,minchevasiegel,leenheer,banajidynsys,angelileenheersontag,angelisontagfutile,banajimierczynski} for example. The key geometrical insights for the results presented here come from the results on monotone systems with increasing integrals in \cite{mierczynski}, generalised in \cite{banajiangeli,banajiangelierratum}. 

\section{Statement of the results}

The local and global results summarised above will be stated precisely as Theorems~\ref{mainthm0}~and~\ref{mainthm} below after some terminology and notation are introduced. Define $\mathbb{R}^n_{\geq 0}$ to be the nonnegative orthant in $\mathbb{R}^n$, i.e.
\[
\mathbb{R}^n_{\geq 0} = \{x \in \mathbb{R}^n\,:\, x_i \geq 0\,\,\,\mbox{for}\,\,\, i = 1, \ldots, n\}\,.
\]
Similarly 
\[
\mathbb{R}^n_{\leq 0} = \{x \in \mathbb{R}^n\,:\, x_i \leq 0\,\,\,\mbox{for}\,\,\, i = 1, \ldots, n\}\,.
\]
A vector in $\mathbb{R}^n_{\geq 0}$ will be referred to as {\bf nonnegative}, while one in $\mathrm{int}(\mathbb{R}^n_{\geq 0})$ will be termed {\bf positive}. As chemical concentrations are necessarily nonnegative, $\mathbb{R}^n_{\geq 0}$ is the natural state-space for ODE models of systems of chemical reactions. \\

We will be considering dynamical systems of the form:
\begin{equation}
\label{eqmain}
\dot x = \Gamma v(x)\,,
\end{equation}
with the following assumptions:
\begin{enumerate}
\item[A1.] $x \in \mathbb{R}^n_{\geq 0}$, $\Gamma$ is an $n \times m$ matrix, $v:U \to \mathbb{R}^m$ is $C^1$ and is defined on some open neighbourhood $U$ of $\mathbb{R}^n_{\geq 0}$. 
\item[A2.] The reaction rates $v(x)$ satisfy conditions K1, K2 and K3 listed in Appendix~\ref{appkinetic}.
\item[A3.] $\Gamma = \Lambda\Theta$, where
\begin{enumerate}
\item[i.] $\Lambda$ is an $n \times r$ matrix with each row containing exactly one nonzero entry, and no column of zeros. 
\item[ii.] $\Theta$ is an $r \times m$ matrix such that $\Theta_{ij}\Theta_{kj} \leq 0$ for $i \not = k$ and $\mathrm{ker}(\Theta^T)$ is one dimensional and includes a positive vector.
\end{enumerate}
\item[A4.] The DSR graph for the system at each $x \in \mathrm{int}(\mathbb{R}^n_{\geq 0})$ is strongly connected.
\end{enumerate}
\vspace{0.3cm}

{\bf Remark on condition A1.} $x$ describes the concentrations of a set of $n$ chemical species involved in $m$ chemical reactions. The matrix $\Gamma$ is termed the {\bf stoichiometric matrix} of the system and $\Gamma_{ij}$ defines the net production/consumption of species $i$ by reaction $j$. It is convenient, though not necessary, to assume that the vector field is defined on an open set containing the nonnegative orthant, in order to avoid technicalities when discussing its derivative. Note that in this paper we follow the convention that a reversible reaction is treated as a single process (rather than as a pair of irreversible reactions), contributing one column to $\Gamma$ and one reaction rate in $v$. In the case of an irreversible reaction we adopt the convention that reactants occur on the left (and are hence associated with negative entries in $\Gamma$), while products occur on the right (and are associated with positive entries in $\Gamma$). In the case of reversible reactions, the choice of ``left'' and ``right'' is arbitrary: altering this choice for the $j$th reaction re-signs the $j$th column of $\Gamma$ and the $j$th reaction rate $v_j$. All results in this paper are independent of this choice.

{\bf Remark on condition A2.} This is a weak assumption on the kinetics. It can be crudely summarised via the statements: ``reactions need all their reactants to proceed'' and ``provided all reactants are present, increasing a concentration speeds up a reaction''. The assumption has been discussed and illustrated previously in \cite{angelileenheersontag,banajimierczynski} for example. The assumption implies, amongst other things, that the nonnegative orthant is positively invariant (Lemma~10 in \cite{banajimierczynski}). With condition A1, this guarantees that (\ref{eqmain}) defines a local semiflow $\phi$ on $\mathbb{R}^n_{\geq 0}$.

{\bf Remark on condition A3.} Condition~A3(i) implies that $\Lambda$ has rank $r$, and condition~A3(ii) implies that each column of $\Theta$ contains exactly one negative entry and exactly one positive entry. The implications of condition A3 will be explored in detail later during proof of the results. In brief, the first factor $\Lambda$ will define a closed, convex and pointed cone in $\mathbb{R}^n$ and the system will be shown to be order-preserving with respect to the order defined by this cone. The nature of this order-cone and its relationship with invariant subspaces of the system will imply the local and global convergence results below. Several lemmas which precede the proofs of these results require weaker conditions on $\Lambda$ and $\Theta$ than A3(i) and A3(ii); condition A3 can thus be seen as the intersection of the various conditions needed in these preliminary lemmas. We will comment in the concluding section on how to identify matrices which admit a factorisation as in condition A3.

{\bf Remark on condition A4.} The DSR graph associated with a CRN \cite{banajicraciun2} is a signed, labelled, bipartite, multidigraph, with relationships to other well-known objects such as Petri nets. It is strongly connected  if there is a (directed) path from each vertex to each other vertex. We need only the following reduced construction here. Given an $n \times m$ matrix $A$ and an $m \times n$ matrix $B$, the (reduced) DSR graph $G_{A, B}$ is defined as follows: it is a bipartite digraph on $n+m$ vertices, $S_1, \ldots, S_n$ and $R_1, \ldots, R_m$ with arc $R_jS_i$ if, and only if, $A_{ij}\not = 0$ and arc $S_iR_j$ if, and only if, $B_{ji} \not = 0$. If desired, the arcs may be given the signs of the associated entries in $A$ and $B$. For System (\ref{eqmain}), at each $x \in \mathbb{R}^n_{\geq 0}$ we can define $G(x) = G_{\Gamma, -Dv(x)}$. Under condition A2, $G(x)$ is constant on $\mathrm{int}(\mathbb{R}^n_{\geq 0})$, and in fact on each elementary face of $\mathbb{R}^n_{\geq 0}$ (defined below).\\

{\bf Notation and terminology.} Given any $n \times k$ matrix $A$, and $x, y \in \mathbb{R}^n$ we will write $x \sim^A y$ for $x - y \in \mathrm{Im}\,A$. Clearly $\sim^A$ is an equivalence relation on $\mathbb{R}^n$. Given any such matrix $A$ and any $x \in \mathbb{R}^n_{\geq 0}$, define
\[
\mathcal{C}_{A, x} \equiv (x + \mathrm{Im}(A)) \cap \mathbb{R}^n_{\geq 0} = \{y \in \mathbb{R}^n_{\geq 0}\,:\, y \sim^A x\}.
\]
In the study of chemical reactions, $\mathcal{C}_{\Gamma, x}$ is termed the {\bf stoichiometry class} of $x$ (also known as the ``stoichiometric compatibility class'' of $x$); for system (\ref{eqmain}) satisfying assumptions A1--A4, $\mathcal{C}_{\Lambda, x}$ will be termed the {\bf $\Lambda$-class} of $x$. Since A3 implies that $\mathrm{Im}(\Gamma) \subseteq \mathrm{Im}(\Lambda)$, stoichiometry classes are subsets of $\Lambda$-classes, and clearly both are (forward) invariant under the local semiflow $\phi$. Stoichiometry classes or $\Lambda$-classes intersecting $\mathrm{int}(\mathbb{R}^n_{\geq 0})$ will be termed {\bf nontrivial}. \\

The first result of this paper is the following local claim:

\begin{theorem}
\label{mainthm0}
Suppose that System (\ref{eqmain}) satisfies assumptions A1, A2, A3 and A4. Each equilibrium $e \in \mathrm{int}(\mathbb{R}^n_{\geq 0})$ is the unique equilibrium on its stoichiometry class $\mathcal{C}_{\Gamma, e}$ and is locally asymptotically stable relative to $\mathcal{C}_{\Gamma, e}$.
\end{theorem}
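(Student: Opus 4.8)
The plan is to realise the dynamics on each $\Lambda$-class as a cooperative (and, via A4, irreducible) system, to extract from A3(ii) a strictly increasing first integral, and then to read off local stability from Perron--Frobenius theory. First I would fix a $\Lambda$-class, choose a base point $x_0$, and use that A3(i) makes $\Lambda$ injective with a left inverse $\Lambda^+$ (so $\Lambda^+\Lambda = I_r$). Passing to coordinates $y = \Lambda^+(x-x_0) \in \mathbb{R}^r$, and using $\Lambda^+\Gamma = \Theta$, the induced system on the $\Lambda$-class is $\dot y = \Theta\, v(x_0 + \Lambda y)$, with Jacobian $A(x) = \Theta\, Dv(x)\,\Lambda$, an $r \times r$ matrix. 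The original order structure on the $\Lambda$-class is then the one induced by the simplicial cone $\Lambda(\mathbb{R}^r_{\geq 0})$, and monotonicity in $x$ is equivalent to cooperativity of the $y$-system.

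Next I would check the sign structure of $A(x)$. Writing $\Gamma_{lk} = \Lambda_{l,c(l)}\Theta_{c(l),k}$, where $c(l)$ is the unique column in which row $l$ of $\Lambda$ is nonzero, and using that A2 forces $\mathrm{sign}(\partial v_k/\partial x_l) = -\mathrm{sign}(\Gamma_{lk})$ whenever this derivative is nonzero, a direct computation shows that each off-diagonal entry $A_{ij}$ ($i \neq j$) is a sum of terms whose $k$-th summand has sign $-\mathrm{sign}(\Theta_{ik}\Theta_{jk})$, which is $\geq 0$ by A3(ii). Hence $A(x)$ is Metzler on $\mathrm{int}(\mathbb{R}^n_{\geq 0})$, i.e.\ the $y$-system is cooperative. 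The positive vector $w \in \ker(\Theta^T)$ guaranteed by A3(ii) satisfies $w^T\Theta = 0$, so $H(y) := w^Ty$ obeys $\dot H = w^T\Theta\, v = 0$: it is a first integral, and since $w \gg 0$ it is strictly increasing along the cone order. Because $\mathrm{Im}(\Theta) = w^\perp$, the level sets of $H$ within a $\Lambda$-class are precisely the stoichiometry classes; in particular, distinct points of a single stoichiometry class are never order-comparable (each class is an antichain).

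For the stability claim I would work at an interior equilibrium $e$. Condition A4 upgrades Metzler to irreducible, strong connectedness of the DSR graph translating into irreducibility of $A(e)$, so Perron--Frobenius theory for irreducible Metzler matrices applies. Since $w^TA(e) = w^T\Theta\, Dv(e)\,\Lambda = 0$ exhibits $0$ as an eigenvalue with a strictly positive \emph{left} eigenvector, and the only eigenvalue of an irreducible Metzler matrix admitting a positive eigenvector is its simple, strictly dominant spectral abscissa, the spectral abscissa of $A(e)$ must be $0$; every other eigenvalue has strictly negative real part. The eigenvalue $0$ carries a strictly positive \emph{right} eigenvector $\rho \gg 0$, and $w^T\rho > 0$ forces $\rho \notin w^\perp = \mathrm{Im}(\Theta)$. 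As $\mathrm{Im}(\Theta)$ is $A(e)$-invariant and complementary to $\mathrm{span}(\rho)$, the restriction $A(e)|_{\mathrm{Im}(\Theta)}$ has exactly the remaining eigenvalues, all with negative real part. Since $\mathrm{Im}(\Theta)$ corresponds under $\Lambda$ to the tangent space $\mathrm{Im}(\Gamma)$ of the (flat) stoichiometry class, the class-restricted linearisation is a stable matrix, giving local asymptotic stability of $e$ relative to $\mathcal{C}_{\Gamma,e}$.

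Uniqueness of $e$ on the whole class I would deduce from the monotone-systems-with-increasing-integral results of \cite{mierczynski,banajiangeli}: strong monotonicity together with the strictly increasing first integral $H$ is incompatible with two distinct equilibria on a single level set, the antichain property being the key obstruction. The main difficulty I anticipate is not the Perron--Frobenius step but the rigorous verification that A2, A3 and A4 genuinely deliver the Metzler and irreducible structure uniformly on $\mathrm{int}(\mathbb{R}^n_{\geq 0})$ — in particular, translating strong connectedness of the species--reaction DSR graph into irreducibility of the reduced $r \times r$ matrix $A$, and reconciling the sign bookkeeping above with the precise kinetic conditions K1--K3 of Appendix~\ref{appkinetic}. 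Care is likewise needed to extend the cone-order preservation to the boundary faces, which is what legitimises invoking the global monotone machinery for the uniqueness half of Theorem~\ref{mainthm0}.
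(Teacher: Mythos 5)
Your proposal is essentially correct, but it proves the stability half by a genuinely different route from the paper. The paper never linearises: after establishing $K(\Lambda)$-quasipositivity of the Jacobian (Corollary~\ref{corqp}) and strict $K(\Lambda)$-quasipositivity at interior points (Corollary~\ref{corstrict1}), hence monotonicity and strong monotonicity on $\Lambda$-classes (Corollary~\ref{hscor}), it constructs a strictly ordered curve of equilibria $\psi([H(e_0),H(e)])$ below $e$ (Lemma~\ref{localhomeo}), and for $x$ near $e$ in $\mathcal{C}_{\Gamma,e}$ defines $Q(x) = (x-\mathrm{relbd}\,K(\Lambda)) \cap \psi([H(e_0),H(e)])$ and the Lyapunov function $L(x)=H(Q(x))$, which strong monotonicity makes strictly increasing along nontrivial orbits. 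Your spectral argument --- pass to $\dot y = \Theta v(x_0+\Lambda y)$, check $A=\Theta\,Dv\,\Lambda$ is Metzler, use $w^TA=0$ with $w\gg 0$ to pin the spectral abscissa at $0$, split the spectrum across $\mathrm{span}(\rho)\oplus\mathrm{Im}(\Theta)$, and conclude the class-restricted linearisation is Hurwitz --- is sound (your sign bookkeeping and the Perron--Frobenius facts you invoke are all correct), and it even yields a sharper local conclusion, namely exponential attraction on the class. What the paper's softer route buys is reusability: the same Lyapunov construction extends under A5--A6 to the whole relative interior of a stoichiometry class, which is how Theorem~\ref{mainthm} is proved; a linearisation argument cannot be globalised this way.

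Two places where your sketch needs completing. First, the irreducibility step you flag as the main difficulty is exactly what the paper outsources to \cite{banajiburbanks} (Lemma~\ref{lemDSR}, with Lemma~\ref{lemnotinspan} verifying its hypothesis), but in your reduced coordinates it admits a short direct proof: each term of the off-diagonal sum $A_{ij} = \sum_{k}\sum_{l:\,c(l)=j}\Theta_{ik}(\partial v_k/\partial x_l)\Lambda_{lj}$ has sign $-\mathrm{sign}(\Theta_{ik}\Theta_{jk})\geq 0$, so no cancellation can occur, and any directed path $S_{l_1}\to R_{k_1}\to S_{l_2}\to\cdots$ in the strongly connected DSR graph collapses (species $l$ mapping to its group $c(l)$) to a directed walk in the digraph of $A$; hence $A(e)$ is irreducible. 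Second, and more seriously, your uniqueness argument as stated is too thin: strong monotonicity together with the antichain property does not by itself exclude two \emph{unordered} equilibria on one level set, since strong monotonicity says nothing directly about unordered pairs. The paper's missing ingredient is the lattice structure of $\Lambda$-classes (Lemmas~\ref{latticefull}~and~\ref{lat0}): given unordered equilibria $x,y$ one forms $z = x\curlywedge y$, gets $\phi_t(z)\preceq z$ by monotonicity, concludes $z$ is an equilibrium because $\mathcal{C}_{\Gamma,z}$ is an antichain (Corollary~\ref{unord1}), and then obtains a contradiction by applying strong monotonicity to the ordered pair $(z,y)$, since $y-z$ lies on the relative boundary of $K(\Lambda)$ (Lemma~\ref{lemord0}). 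Your citation of \cite{mierczynski,banajiangeli} could substitute for this, but only after checking that those hypotheses survive the fact that strong monotonicity here holds only when one of the compared points lies in $\mathrm{int}(\mathbb{R}^n_{\geq 0})$ --- which is precisely the delicacy the lattice argument lets the paper sidestep.
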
\\

This will be proved via construction of a Lyapunov function on a neighbourhood in $\mathcal{C}_{\Gamma,e}$ of any positive equilibrium $e$. In order to extend this result to a global result, we need conditions to ensure that each nontrivial stoichiometry class contains an equilibrium, the Lyapunov function exists on the whole relative interior of each nontrivial stoichiometry class, and moreover that trajectories cannot approach the relative boundary of a nontrivial stoichiometry class. To make clear these notions we need to introduce some additional ideas. 

Given an $n \times r$ matrix $\Lambda$, define the closed, convex cone $K(\Lambda) \subseteq \mathbb{R}^n$ associated with $\Lambda$ as:
\[
K(\Lambda) = \{\Lambda y\,:\, y \in \mathbb{R}^r_{\geq 0}\}\,.
\]
A local semiflow on $\mathbb{R}^n_{\geq 0}$ is {\bf persistent} if 
\[
x \in \mathrm{int}(\mathbb{R}^n_{\geq 0}) \Rightarrow \omega(x) \cap \partial \,\mathbb{R}^n_{\geq 0} = \emptyset
\]
where $\omega(x)$ is the $\omega$-limit set of $x$.

Let $S \subseteq \{1, \ldots, n\}$ and $S^c = \{1, \ldots, n\}\backslash S$. Define 
\[
F_S = \{x \in \mathbb{R}^n\,:\, x_i > 0, i \in S\,\,\, \mbox{and}\,\,\, x_i = 0, i \not \in S\}.
\]
$F_S$ will be referred to as an {\bf elementary face} of $\mathbb{R}^n_{\geq 0}$. (Elementary faces are the relative interiors of the closed faces of $\mathbb{R}^n_{\geq 0}$.) An elementary face other than $\mathrm{int}(\mathbb{R}^n_{\geq 0})$ or $\{0\}$ will be termed nontrivial. An elementary face $F_S$ is {\bf repelling} if, at each $x \in F_S$, there exists $i \in S^c$ such that $\dot x_i > 0$. Quite generally a repelling face of $\mathbb{R}^n_{\geq 0}$ can contain no $\omega$-limit points of a local semiflow on $\mathbb{R}^n_{\geq 0}$ (Lemma~11 in \cite{banajimierczynski}). \\

To the list of conditions A1--A4 we add two further conditions:
\begin{enumerate}
\item[A5.] $K(\Lambda) \cap \mathbb{R}^n_{\leq 0} = \{0\}$.
\item[A6.] 
\begin{enumerate}
\item[i.] All reactions are reversible, or 
\item[ii.] Every nontrivial elementary face of $\mathbb{R}^n_{\geq 0}$ which intersects a nontrivial stoichiometry class of the system is repelling.
\end{enumerate}
\end{enumerate}
\vspace{0.3cm}

{\bf Remark on condition A5.} It will be shown later that condition A5 guarantees that the Lyapunov function constructed via conditions A1--A4 extends to the entire relative interior of each nontrivial stoichiometry class. Given $\Lambda$ of the form in condition~A3(i), it is clear that condition~A5 is not satisfied if and only if $\Lambda$ includes a column in $\mathbb{R}^n_{\leq 0}$, i.e., with no positive entries.

{\bf Remark on condition A6.} It will be shown in Lemma~\ref{lemsiphon} that conditions A1--A3 and A6(i) imply condition A6(ii) which, by the remarks above, implies the following: given any $x \in \mathrm{int}(\mathbb{R}^n_{\geq 0})$ and $y \in \mathcal{C}_{\Gamma, x}$, then $\omega(y) \cap \partial \mathbb{R}^n_{\geq 0} = \emptyset$, namely $\mathcal{C}_{\Gamma, x} \cap \partial \mathbb{R}^n_{\geq 0}$ contains no limit points of the local semiflow $\phi$. Note that this is a stronger conclusion than persistence of $\left.\phi\right|_{\mathcal{C}_{\Gamma, x}}$. The conclusion that for systems of reversible reactions satisfying conditions A1--A3 and A6(i), equilibria on each nontrivial stoichiometry class attract the whole class, including its boundary, is of some interest in itself: invariant elementary faces of $\mathbb{R}^n_{\geq 0}$ do not intersect nontrivial stoichiometry classes at all, thus making persistence a {\em structural} feature of the systems. The occurrence of such structural persistence was already remarked on in \cite{feinberg}; in contrast proving persistence for CRNs with mass action kinetics in the general situation where nontrivial stoichiometry classes may intersect invariant faces of $\mathbb{R}^n_{\geq 0}$ is delicate (\cite{craciunnazarovpantea,panteapersistence} for example) .\\

We have the following global result:

\begin{theorem}
\label{mainthm}
Suppose that System (\ref{eqmain}) satisfies conditions A1, A2, A3, A4, A5 and A6. Then each nontrivial stoichiometry class contains exactly one equilibrium, which lies in $\mathrm{int}(\mathbb{R}^n_{\geq 0})$, and is globally asymptotically stable relative to its stoichiometry class. 
\end{theorem}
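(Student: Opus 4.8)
The plan is to obtain the three assertions of the theorem — existence of an equilibrium on each nontrivial stoichiometry class, its interiority, and its global asymptotic stability relative to the class — from four ingredients established or available earlier: strong monotonicity of the semiflow $\phi$ with respect to the order $\leq_{K(\Lambda)}$ induced by the cone $K(\Lambda)$ (a consequence of A3 and A4), a strictly increasing first integral furnished by A3(ii), compactness of stoichiometry classes guaranteed by A5, and the absence of $\omega$-limit points on $\partial\mathbb{R}^n_{\geq 0}$ guaranteed by A6. Throughout I fix a nontrivial stoichiometry class $\mathcal{C} = \mathcal{C}_{\Gamma, x_0}$ with $x_0 \in \mathrm{int}(\mathbb{R}^n_{\geq 0})$.

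First I would construct the increasing integral and deduce compactness. Let $w$ span $\mathrm{ker}(\Theta^T)$ with $w \in \mathrm{int}(\mathbb{R}^r_{\geq 0})$, as permitted by A3(ii). Since $\Lambda$ has rank $r$ there exists $c$ with $\Lambda^T c = w$, and A5 — equivalently, that no column of $\Lambda$ lies in $\mathbb{R}^n_{\leq 0}$ — is exactly what allows $c$ to be taken in $\mathrm{int}(\mathbb{R}^n_{\geq 0})$. Then $c^T\Gamma = (\Lambda^T c)^T\Theta = w^T\Theta = 0$, so $H(x) = c^T x$ is a first integral, constant on $\mathcal{C}$, while for any nonzero $k = \Lambda y \in K(\Lambda)$ we have $c^T k = w^T y > 0$, so $H$ is strictly increasing along $\leq_{K(\Lambda)}$. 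Because $c \in \mathrm{int}(\mathbb{R}^n_{\geq 0})$ and $c^T\Gamma = 0$, any nonzero $d \in \mathrm{Im}(\Gamma) \cap \mathbb{R}^n_{\geq 0}$ would satisfy both $c^T d = 0$ and $c^T d > 0$; hence the recession cone of $\mathcal{C}$ is trivial, $\mathcal{C}$ is compact, and every forward orbit in $\mathcal{C}$ is precompact. I would also record that distinct points of $\mathcal{C}$ are $\leq_{K(\Lambda)}$-incomparable, since $H$ is constant on $\mathcal{C}$ yet strictly monotone along the order.

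Next, existence, interiority, and global convergence. For $x_0 \in \mathcal{C} \cap \mathrm{int}(\mathbb{R}^n_{\geq 0})$ the precompact orbit has nonempty, compact, connected, invariant $\omega(x_0) \subseteq \mathcal{C}$, and by A6 (via Lemma~\ref{lemsiphon} in the reversible case) $\omega(x_0) \subseteq \mathrm{int}(\mathbb{R}^n_{\geq 0})$. As $H$ is conserved it is constant on $\omega(x_0)$, and this is precisely the situation in which strong monotonicity together with an increasing integral \cite{mierczynski,banajiangeli} forces $\omega(x_0)$ to reduce to a single equilibrium $e \in \mathrm{int}(\mathbb{R}^n_{\geq 0})$; Theorem~\ref{mainthm0} then identifies $e$ as the unique equilibrium on $\mathcal{C}$, locally asymptotically stable relative to $\mathcal{C}$. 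To globalise, I would use the Lyapunov function
\[
V(x) = H(x \vee e) - H(x \wedge e),
\]
where $\vee$ and $\wedge$ denote the join and meet for $\leq_{K(\Lambda)}$; these exist for points of a common $\Lambda$-class and, because the columns of $\Lambda$ have disjoint supports, are computed coordinatewise as maxima and minima, so $x \vee e$ and $x \wedge e$ remain in $\mathbb{R}^n_{\geq 0}$. Then $V \geq 0$ on $\mathcal{C}$ with $V(x) = 0$ only at $e$. From $\phi_s(x \vee e) \geq_{K(\Lambda)} \phi_s(x) \vee e$ and $\phi_s(x \wedge e) \leq_{K(\Lambda)} \phi_s(x) \wedge e$, combined with the monotonicity and conservation of $H$, the map $t \mapsto V(\phi_t(x))$ is non-increasing, and strong monotonicity makes the decrease strict away from equilibria. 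Since A5 makes $V$ proper on the compact set $\mathcal{C}$, LaSalle's invariance principle gives, for every $x \in \mathcal{C}$, that $\omega(x)$ lies in the largest invariant subset of $\{\dot V = 0\}$; as $\omega(x)$ avoids the boundary by A6 and $e$ is the only interior equilibrium on $\mathcal{C}$, this set is $\{e\}$. Thus every point of $\mathcal{C}$, including its relative boundary, converges to $e$, which with local stability yields the claimed global asymptotic stability relative to $\mathcal{C}$.

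The main obstacle I anticipate is establishing the strict Lyapunov decrease, i.e. that $\dot V < 0$ at every non-equilibrium point of $\mathcal{C} \cap \mathrm{int}(\mathbb{R}^n_{\geq 0})$: this is where strong monotonicity (A4) must be married to the increasing-integral structure to exclude any nontrivial recurrence in $\omega$-limit sets, and it is precisely the step that promotes the merely generic convergence of monotone systems to convergence of every orbit. A secondary but genuine technical point is justifying the comparison estimates for $\phi_s(x \vee e)$ and $\phi_s(x \wedge e)$, and the properness of $V$, on the whole of $\mathcal{C}$ rather than only near $e$ — which is exactly where A5 and the disjoint-support structure of $\Lambda$ are needed.
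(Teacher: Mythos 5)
Your proposal is correct, but it takes a genuinely different route from the paper, principally in the globalisation step. The paper uses A5 through Zorn's lemma (Lemma~\ref{minimal}) to produce an infimum $z$ of the $\Lambda$-class (automatically an equilibrium), then combines Brouwer's fixed point theorem (Lemma~\ref{haseq}), A6 and Lemma~\ref{lemord0} to build a strictly ordered homeomorphic curve of equilibria $E_{z,c}$ spanning all values of $H$ (Lemmas~\ref{minimal1},~\ref{homeo}), and finally takes as Lyapunov function $L(y)=H(Q(y))$ where $Q(y)$ is the unique intersection of $y-\mathrm{relbd}\,K(\Lambda)$ with this curve; A6 must then be invoked a second time because $L$ is only nondecreasing at boundary points. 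You instead use A5 to choose $p_\Theta\gg 0$ (correct under A3(i): this is possible exactly when no column of $\Lambda$ is nonpositive), which immediately kills the recession cone $\mathrm{Im}\,\Gamma\cap\mathbb{R}^n_{\geq 0}$ and makes every stoichiometry class compact --- a more direct use of A5 than the paper's. You then obtain one interior equilibrium $e$ per nontrivial class by citing the convergence theorems of \cite{mierczynski,banajiangeli}; this is legitimate, though those results are stated for proper cones, so you need the paper's identification of a $\Lambda$-class with a subset of $\mathbb{R}^r$ --- and in fact, given your compactness observation, Brouwer applied directly to the compact, convex, forward-invariant set $\mathcal{C}$ gives existence more cheaply, with interiority from A6(ii) since repelling faces carry no equilibria. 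Your lattice Lyapunov function $V(x)=H(x\curlyvee e)-H(x\curlywedge e)$ (the paper's notation for your $\vee,\wedge$; note these are coordinatewise max/min only up to the signs of the entries of $\Lambda$, but your real claim --- that they stay in $\mathbb{R}^n_{\geq 0}$ --- is exactly Lemma~\ref{lat0}) is anchored at the single equilibrium $e$ and needs none of the equilibrium-curve machinery; that is the main simplification your route buys.

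The obstacle you flag --- strict decrease of $V$ --- does resolve, and your function in fact behaves better at $\partial\mathbb{R}^n_{\geq 0}$ than the paper's $L$. For $x\in\mathcal{C}\setminus\{e\}$ and $t>0$: backward uniqueness gives $\phi_t(x)\neq e$, and both lie in the antichain $\mathcal{C}$ (Corollary~\ref{unord1}), hence are incomparable; writing points of the $\Lambda$-class in $\mathbb{R}^r$-coordinates, incomparability forces $e-(\phi_t(x)\curlywedge e)=\Lambda w$ with $w\in\mathbb{R}^r_{\geq 0}$ having a zero coordinate, so $e-(\phi_t(x)\curlywedge e)\notin\mathrm{relint}\,K(\Lambda)$. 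On the other hand $x\curlywedge e\prec e$ with $e$ interior, so Corollary~\ref{hscor} (which needs only \emph{one} of the two ordered points to be interior) gives $e-\phi_t(x\curlywedge e)\in\mathrm{relint}\,K(\Lambda)$. Since $\phi_t(x\curlywedge e)$ is a lower bound of $\{\phi_t(x),e\}$ in the $\Lambda$-class, it follows that $\phi_t(x\curlywedge e)\prec\phi_t(x)\curlywedge e$ strictly, whence by Lemma~\ref{inc} and conservation of $H$,
\[
V(\phi_t(x)) = H(\phi_t(x)\curlyvee e)-H(\phi_t(x)\curlywedge e) < H(\phi_t(x\curlyvee e))-H(\phi_t(x\curlywedge e)) = V(x).
\]
This argument nowhere requires $x$ to be interior, so $V$ decreases strictly even along boundary orbits; consequently your second appeal to A6 inside LaSalle is redundant, and A6 is needed only where you first use it, to produce the interior equilibrium $e$.
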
\\

In physical terms chemical reaction networks satisfying conditions A1--A6 have very simple behaviour: different initial conditions on the same nontrivial stoichiometry class converge to the same positive equilibrium. 

\section{Examples}

The proofs of Theorems~\ref{mainthm0}~and~\ref{mainthm} are somewhat involved, so some examples of their application are provided first. 

\subsection{Example 1}

Consider the system of three reactions involving four chemicals
\begin{equation}
\label{eqdependent}
A \rightleftharpoons B + C, \qquad B \rightleftharpoons D, \qquad C + D \rightleftharpoons A 
\end{equation}
and the associated differential equation (\ref{eqmain}) with assumptions A1, A2. The stoichiometric matrix, $\Gamma$ admits a factorisation $\Gamma = \Lambda\Theta$ as follows: 
\[
\left(\begin{array}{rrr}-1&0&1\\1&-1&0\\1&0&-1\\0&1&-1\end{array}\right) = \left(\begin{array}{rrr}1&\,\,\,0&\,\,\,0\\0&1&0\\-1&0&0\\0&0&1\end{array}\right)\left(\begin{array}{rrr}-1&0&1\\1&-1&0\\0&1&-1\end{array}\right)
\]
Note that $\Lambda$ and $\Theta$ fulfil conditions A3. The DSR graph at each interior point (drawn under assumption A2) is shown in Figure~\ref{SRdependent}, and is clearly strongly connected. So condition A4 is satisfied. By observation, condition A5 holds. Finally, all reactions are assumed to be reversible, so condition A6 holds. By Theorem~\ref{mainthm} each nontrivial stoichiometry class contains exactly one equilibrium which attracts the whole stoichiometry class. 

\begin{figure}[h]
\begin{center}
\begin{minipage}{0.6\textwidth}
\begin{center}
\begin{tikzpicture}[domain=0:4,scale=0.45]
\fill (1,4) circle (4pt);
\node at (4,4) {$B$};
\draw[-, thick] (1.4,4) -- (3.6,4);
\node at (2.5,2.5) {$C$};
\draw[-, thick] (2.2,2.8) -- (1.2, 3.8);
\draw[-, dashed, thick] (2.8,2.2) -- (3.8, 1.2);
\fill (4,1) circle (4pt);
\node at (1,1) {$A$};
\node at (7,1) {$D$};
\fill (7,4) circle (4pt);
\draw[-, thick] (1.5,1) -- (3.6,1);
\draw[-, dashed, thick] (1,1.5) -- (1,3.6);
\draw[-, thick, dashed] (4.4,1) -- (6.5,1);
\draw[-, dashed, thick] (4.5,4) -- (6.6,4);
\draw[-, thick] (7,1.5) -- (7,3.7);
\end{tikzpicture}

\end{center}
\end{minipage}
\end{center}
\caption{\label{SRdependent} The DSR graph for reaction system (\ref{eqdependent}) at any interior point in $\mathbb{R}^n_{\geq 0}$. Edge-labels are omitted. Negative edges are represented as dashed lines, while positive edges are shown with bold lines: only connectedness is important for the results here, and so edge signs are not needed; however including these makes the relationship between the DSR graph and the network of reactions (\ref{eqdependent}) clearer to see. Pairs of antiparallel arcs of the same sign are represented as single undirected edges.}
\end{figure}
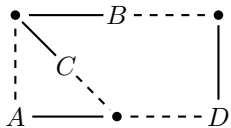

We remark that this example also fulfils the conditions in \cite{angelileenheersontag}, from which global stability follows from a rather different argument. This is not the case for the next example.

\subsection{Example 2} 

Consider the following system of 4 chemical reactions on 5 chemicals:
\begin{equation}
\label{eqdependent1}
A \rightleftharpoons B + C, \qquad B \rightleftharpoons D, \qquad C + D \rightleftharpoons A, \qquad C + E \rightleftharpoons A.
\end{equation}
As before, make assumptions A1, A2 about the kinetics. The stoichiometric matrix, $\Gamma$ admits a factorisation $\Gamma = \Lambda\Theta$ as follows: 
\[
\left(\begin{array}{rrrr}-1&0&1&1\\1&-1&0&0\\1&0&-1&-1\\0&1&-1&0\\0&0&0&-1\end{array}\right) = \left(\begin{array}{rrrr}1&\,\,\,0&\,\,\,0&\,\,\,0\\0&1&0&0\\-1&0&0&0\\0&0&1&0\\0&0&0&1\end{array}\right)\left(\begin{array}{rrrr}-1&0&1&1\\1&-1&0&0\\0&1&-1&0\\0&0&0&-1\end{array}\right).
\]
It is again easy to confirm that condition A3 is satisfied, and that the DSR graph at each interior point under assumption A2 (Figure~\ref{SRdep1}) is strongly connected, so that condition A4 is satisfied. 

\begin{figure}[h]
\begin{center}
\begin{tikzpicture}[domain=0:4,scale=0.45]
\fill (1,4) circle (4pt);
\node at (4,4) {$C$};
\draw[-, thick, dashed] (1.4,4) -- (3.5,4);
\node at (2.5,2.5) {$E$};
\draw[-, thick, dashed] (2.2,2.8) -- (1.2, 3.8);

\fill (4,1) circle (4pt);
\node at (1,1) {$A$};
\node at (7,1) {$B$};
\fill (10,1) circle (4pt);
\fill (7,4) circle (4pt);
\node at (10,4) {$D$};
\draw[-, thick, dashed] (1.5,1) -- (3.6,1);
\draw[-, thick] (1,1.5) -- (1,3.6);
\draw[-, thick] (4.4,1) -- (6.5,1);
\draw[-, thick, dashed] (7.5,1) -- (9.6,1);
\draw[-, thick] (10,1.4) -- (10,3.5);
\draw[-, thick, dashed] (7.4,4) -- (9.5,4);
\draw[-, thick, dashed] (4.5,4) -- (6.6,4);
\draw[-, thick] (4,1.4) -- (4,3.5);

\draw[-, thick] (6.7,4.3) .. controls (1,7) and (-1.5,4.5) .. (0.7,1.3);

\end{tikzpicture}

\end{center}
\caption{\label{SRdep1} The DSR graph for reaction system \ref{eqdependent1} at each interior point of $\mathbb{R}^n_{\geq 0}$. Conventions are as in Figure~\ref{SRdependent}.}
\end{figure}
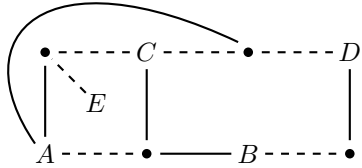

By observation, condition A5 holds. Condition A6 again holds automatically as reactions have been assumed to be reversible. By Theorem~\ref{mainthm} each nontrivial stoichiometry class contains exactly one equilibrium which attracts the whole stoichiometry class. 

To illustrate the case where reactions are not all reversible, we consider one further example. 

\subsection{Example 3}

We consider an example sometimes termed an ``enzymatic futile cycle'' \cite{angelisontagfutile} and whose biological importance is discussed in some detail in \cite{fellmetabolism}. Global stability in this example can be proven in many ways (\cite{angelisontagfutile} for example), but is also an immediate consequence of the results in this paper. 

The enzymatic futile cycle:
\begin{equation}
\label{reac3}
\begin{array}{ccccc}
S_1 + E & \rightleftharpoons & ES_1 & \rightarrow & S_2 + E\\
S_2 + F & \rightleftharpoons & FS_2 & \rightarrow & S_1 + F
\end{array}
\end{equation}
has stoichiometric matrix which factorises as follows:
\[
\left(\begin{array}{rrrr}-1 & 0 & 0 & 1\\-1 & 1 & 0 & 0\\1 & -1 & 0 & 0\\0 & 1 & -1 & 0\\0 & 0 & -1 & 1\\0 & 0 & 1 & -1\end{array}\right) = \left(\begin{array}{rrrr}1&0&0&0\\0&-1&0&0\\0&1&0&0\\0&0&1&0\\0&0&0&-1\\0&0&0&1\end{array}\right)\left(\begin{array}{rrrr}-1&0&0&1\\1&-1&0&0\\0&1&-1&0\\0&0&1&-1\end{array}\right),
\]
and DSR graph shown in Figure~\ref{DSR3}.
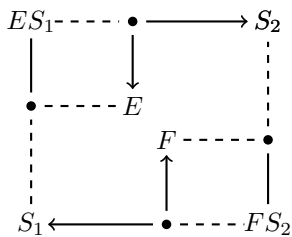
\begin{figure}[h]
\begin{center}
\begin{tikzpicture}[domain=0:4,scale=0.45]

\node at (1,1.5) {$S_1$};
\fill (1,5) circle (4pt);
\fill (5,1.5) circle (4pt);
\node at (1,7.5) {$ES_1$};
\fill (4,7.5) circle (4pt);
\node at (8,7.5) {$S_2$};
\node at (4,5) {$E$};
\node at (5,4) {$F$};
\node at (8,1.5) {$FS_2$};
\fill (8,4) circle (4pt);
\node at (8,7.5) {$S_2$};
\draw[<-, thick] (1.5,1.5) -- (4.6,1.5);
\draw[-, thick, dashed] (5.4,1.5) -- (7.3,1.5);
\draw[-, thick, dashed] (5.5,4) -- (7.6,4);

\draw[-, thick, dashed] (1,2.1) -- (1,4.6);
\draw[-, thick] (1, 5.4) -- (1,7);
\draw[<-, thick] (4, 5.5) -- (4,7.1);

\draw[-, thick, dashed] (1.7,7.5) -- (3.6,7.5);
\draw[->, thick] (4.4,7.5) -- (7.4,7.5);

\draw[-, thick, dashed] (1.4,5) -- (3.5,5);

\draw[-, thick] (8,2.1) -- (8,3.6);
\draw[-, thick, dashed] (8, 4.4) -- (8,6.9);

\draw[->, thick] (5,1.9) -- (5,3.55);

\end{tikzpicture}
\end{center}
\caption{\label{DSR3} The DSR graph for reaction system \ref{reac3} which is clearly strongly connected. Conventions are as in Figure~\ref{SRdependent}. Note that as a consequence of the irreversibility of some reactions, some edges in the DSR graph do not have an oppositely directed partner and so appear as directed.}
\end{figure}

Conditions A1 and A2 are assumed, while conditions A3--A5 are easy to confirm. On the other hand, Condition A6(ii) can be directly checked. The details involve the notions of {\bf siphons} and associated faces of $\mathbb{R}^n_{\geq 0}$, developed in Appendix~\ref{appsiphon}, and are presented in Appendix~\ref{appEx3}. 

\section{Proofs of the results}

\subsection{Summary of the proofs}

The proofs proceed roughly as follows. The key objects of interest are $\Lambda$-classes which are in general invariant $r$-dimensional objects in $\mathbb{R}^n$. Conditions A1--A3 will imply that on each $\Lambda$-class $\phi$ is monotone with respect to the ordering defined by $K(\Lambda)$. Additionally, if condition A4 holds, then $\phi$ is strongly monotone in the relative interior of each nontrivial $\Lambda$-class (in a sense to be made precise below). Condition A3 (ii) allows the construction of an increasing (linear) first integral on each $\Lambda$-class such that stoichiometry classes become the level sets of this function. One implication is that stoichiometry classes are antichains in the ordering defined by $K(\Lambda)$. Via a construction closely related to those in \cite{mierczynski,banajiangeli}, conditions A1--A4 allow the construction of a Lyapunov function on each stoichiometry class in a neighbourhood of any positive equilibrium, strictly increasing along nontrivial trajectories. Adding condition A5 allows this Lyapunov function to be extended to the entire relative interior of the stoichiometry class, while assumption A6 ensures that trajectories on nontrivial stoichiometry classes cannot have $\omega$-limit sets intersecting $\partial \mathbb{R}^n_{\geq 0}$. While some of the arguments are minor modifications of arguments in \cite{banajiangeli}, often they are simpler than the general arguments there, and so the presentation will be mostly self-contained. 

\subsection{Basics on cones and partial orders}

A closed, convex and pointed cone $K \subseteq \mathbb{R}^n$ (i.e., a closed, convex cone additionally satisfying $K \cap (-K) = \{0\}$) will be termed a {\bf CCP cone}. If a CCP cone $K$ has nonempty interior, then it will be termed {\bf proper} \cite{berman}. Define $K^{*}$ to be the {\bf dual cone} to $K$, i.e., $K^* = \{\lambda \in \mathbb{R}^n\,|\, \langle \lambda, y \rangle \geq 0\,\,\mbox{for all}\,\, y \in K\}$.

Given an $n \times r$ matrix $\Lambda$ with rank $r$, then $K(\Lambda)$ is an $r$-dimensional CCP cone in $\mathbb{R}^n$ generated by $r$ extremal vectors (the columns of $\Lambda$). However, we can consider $K(\Lambda)$ to be proper on any coset of $\mathrm{Im}\,\Lambda$ (the affine hull of $K(\Lambda)$) in the following sense: given $z \in \mathbb{R}^r$ and $c \in \mathbb{R}^n$, the map $z \mapsto c + \Lambda z$ is a linear bijection from $\mathbb{R}^r$ to a coset of $\mathrm{Im}\,\Lambda$, which maps the proper cone $\mathbb{R}^r_{\geq 0}$ to $c + K(\Lambda)$. Via this identification, standard results on monotone dynamical systems can be lifted to cosets of $\mathrm{Im}\,\Lambda$. 

The symbols $<,>,\leq, \geq, \ll, \gg$ will refer to the standard partial ordering on $\mathbb{R}^n$. For example, for $a,b \in \mathbb{R}^n$, $a < b$ means $b-a \in \mathbb{R}^n_{\geq 0} \backslash\{0\}$. When the ordering is that defined by some other cone $K$, the alternative symbols $\prec, \succ, \preceq, \succeq, \llcurly, \ggcurly$ will be used. (Where these symbols are used, the cone in question will be clear from the context.) For example, given some CCP cone $K\subseteq \mathbb{R}^n$ and $a, b \in \mathbb{R}^n$, $a \preceq b$ means $b - a \in K$. Normally $a \llcurly b$ means $b-a \in \mathrm{int}(K)$; here the meaning will be extended so that given any CCP cone $K\subseteq \mathbb{R}^n$, $a \llcurly b$ means $b-a \in \mathrm{relint}(K)$.

Given a partial order $\preceq$ defined by a cone $K$, and some set $X$, if $x, y \in X$ implies $x \not \prec y$, we will say that ``$X$ is a $K$-antichain''.

\subsection{$\Lambda$-classes are lattices}

The lemmas to follow will show that the cones considered here define orderings which make each $\Lambda$-class a lattice. Given $a, b \in \mathbb{R}^r$, define $c=a \wedge b$ by $c_i = \min\{a_i, b_i\}$, and $c=a \vee b$ by $c_i = \max\{a_i, b_i\}$. 

\begin{lemma}
\label{latticefull}
Let $\Lambda$ be an $n \times r$ matrix with rank $r$ defining a cone $K(\Lambda) \subseteq \mathbb{R}^n$. Let $c \in \mathbb{R}^n$ be an arbitrary vector. 
\begin{enumerate}
\item Consider $x, y \in \mathbb{R}^r$, $z = x \wedge y$, $x' = c+ \Lambda x$, $y' = c+ \Lambda y$, and $z' = c+ \Lambda z$. Then (i) $z' \preceq x'$ and $z' \preceq y'$ and (ii) Any $b' \in c+\mathrm{Im}(\Lambda)$ satisfying $b' \preceq x'$ and $b' \preceq y'$, also satisfies $b' \preceq z' $. In other words, $z'$ is the infimum of $x', y'$ in the order defined by $K(\Lambda)$.
\item Consider $x, y \in \mathbb{R}^r$, $z = x \vee y$, $x' = c+ \Lambda x$, $y' = c+ \Lambda y$, and $z' = c+ \Lambda z$. Then (i) $z' \succeq x'$ and $z' \succeq y'$ and (ii) Any $b' \in c+\mathrm{Im}(\Lambda)$ satisfying $b' \succeq x'$ and $b' \succeq y'$, also satisfies $b' \succeq z' $. In other words, $z'$ is the supremum of $x', y'$ in the order defined by $K(\Lambda)$.
\end{enumerate}
\end{lemma}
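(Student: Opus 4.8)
The plan is to reduce everything to the observation that the affine map $\iota : \mathbb{R}^r \to c + \mathrm{Im}(\Lambda)$ given by $\iota(w) = c + \Lambda w$ is an order-isomorphism from $(\mathbb{R}^r, \leq)$, carrying the standard componentwise order, onto the coset $c + \mathrm{Im}(\Lambda)$ equipped with the order $\preceq$ induced by $K(\Lambda)$. Since $\mathrm{rank}(\Lambda) = r$, the columns of $\Lambda$ are linearly independent, so $\Lambda$ is injective and $\iota$ is a bijection onto its image. The operations $\wedge$ and $\vee$ realise the infimum and supremum in $(\mathbb{R}^r, \leq)$, so once $\iota$ is shown to both preserve and reflect the order, the images $z' = \iota(z)$ will automatically be the infimum (Part 1) and supremum (Part 2) of $x', y'$ within the coset.

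First I would dispatch the easy direction, which already yields Part 1(i) and Part 2(i). For $z = x \wedge y$ we have $z_i \leq x_i$ and $z_i \leq y_i$ for all $i$, so $x - z, y - z \in \mathbb{R}^r_{\geq 0}$; hence $x' - z' = \Lambda(x - z) \in K(\Lambda)$ and likewise $y' - z' \in K(\Lambda)$, i.e.\ $z' \preceq x'$ and $z' \preceq y'$. The supremum case is identical with inequalities reversed. More generally this records the implication $a \leq b \Rightarrow \iota(a) \preceq \iota(b)$.

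The substantive step is the reverse implication, which underlies Parts 1(ii) and 2(ii). Given $b' \in c + \mathrm{Im}(\Lambda)$, write $b' = c + \Lambda b$; the hypotheses $b' \preceq x'$ and $b' \preceq y'$ say $\Lambda(x - b) \in K(\Lambda)$ and $\Lambda(y - b) \in K(\Lambda)$. By definition of $K(\Lambda)$ there exist $u, w \in \mathbb{R}^r_{\geq 0}$ with $\Lambda(x - b) = \Lambda u$ and $\Lambda(y - b) = \Lambda w$. Here injectivity of $\Lambda$ is essential: it forces $x - b = u \geq 0$ and $y - b = w \geq 0$, i.e.\ $b \leq x$ and $b \leq y$ componentwise, whence $b \leq x \wedge y = z$. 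Thus $z - b \in \mathbb{R}^r_{\geq 0}$ and $z' - b' = \Lambda(z - b) \in K(\Lambda)$, giving $b' \preceq z'$, which is Part 1(ii). Part 2(ii) follows by reversing all inequalities and replacing $\wedge$ with $\vee$.

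I anticipate no serious obstacle; the only point demanding care is the use of $\mathrm{rank}(\Lambda) = r$ to pass from $\Lambda u \in K(\Lambda)$ back to $u \in \mathbb{R}^r_{\geq 0}$. Without injectivity one could not recover the componentwise inequalities on the preimages, and indeed $K(\Lambda)$ could fail to be pointed, so the rank hypothesis is exactly what is consumed in the greatest-lower-bound and least-upper-bound arguments.
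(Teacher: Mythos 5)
Your proof is correct and follows essentially the same route as the paper's: the easy direction via $x - z \in \mathbb{R}^r_{\geq 0}$, and the greatest-lower-bound step by pulling the cone inequalities back to componentwise inequalities in $\mathbb{R}^r$ using the rank-$r$ hypothesis. The only cosmetic difference is that you invoke injectivity of $\Lambda$ directly, whereas the paper multiplies by a left inverse $\Lambda'$ with $\Lambda'\Lambda = I$ --- these are the same argument.
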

\begin{proof}
Part 1 will proved. The proof of Part 2 is similar. Since $z \leq x$,  $x-z = p \in \mathbb{R}^r_{\geq 0}$. So $x'-z' = (c+\Lambda x)-(c+\Lambda z) = \Lambda p$, i.e., $z' \preceq x'$. Similarly $z' \preceq y'$. 

Consider a vector $b' = c+\Lambda b$ satisfying $b' \preceq x'$ and $b' \preceq y'$. i.e., 
\[
x'-b' = c+\Lambda x - (c+\Lambda b) = \Lambda p_1 \quad \mbox{and} \quad y' - b' = c+\Lambda y - (c + \Lambda b) = \Lambda p_2\,,
\]
where $p_1, p_2 \in \mathbb{R}^r_{\geq 0}$. Multiplying each equation by any matrix $\Lambda '$ such that $\Lambda '\Lambda  = I$ gives $b \leq x$ and $b \leq y$, implying $b \leq x \wedge y \equiv z$, i.e. $z - b = p_3 \in \mathbb{R}^r_{\geq 0}$. So $z' - b' = (c+\Lambda z) - (c+\Lambda b) = \Lambda p_3 \in K(\Lambda)$, i.e. $b'\preceq z'$. 
\end{proof}\\

{\bf Notation.} Given any $\Lambda$ satisfying the assumptions of Lemma~\ref{latticefull}, and $x, y \in \mathbb{R}^n$ with $x \sim^\Lambda y$, we write $x \curlywedge y$ for the infimum of $x, y$ under the order defined by $K(\Lambda)$. Similarly $x \curlyvee y$ refers to the supremum of $x, y$ under the order defined by $K(\Lambda)$. Clearly $(x \curlyvee y) \sim^\Lambda (x \curlywedge y) \sim^\Lambda x \sim^\Lambda y$.\\

The next lemma shows that additional assumptions on $\Lambda$ ensure that the intersection of a coset of $\mathrm{Im}(\Lambda)$ with any closed order interval under the standard ordering (including, for example, $\mathbb{R}^n_{\geq 0}$ itself) is a lattice under the order defined by $K(\Lambda)$. 

\begin{lemma}
\label{lat0}
Let $\Lambda$ be an $n \times r$ matrix of rank $r$, and such that each row contains no more than one positive entry and no more than one negative entry. Given $c \in \mathbb{R}^n$, $x', y' \in c+ \mathrm{Im}(\Lambda)$ and $t \in \mathbb{R}^n$:
\begin{enumerate}
\item If $x', y' \geq t$, then $x' \curlywedge y' \geq t$.
\item If $x', y' \geq t$, then $x' \curlyvee y' \geq t$.
\item If $x', y' \leq t$, then $x' \curlywedge y' \leq t$.
\item If $x', y' \leq t$, then $x' \curlyvee y' \leq t$.
\end{enumerate}
\end{lemma}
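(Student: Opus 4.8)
The plan is to reduce all four statements to a single coordinatewise computation that exploits the sign pattern of the rows of $\Lambda$. Since $\Lambda$ has rank $r$, there are unique $x, y \in \mathbb{R}^r$ with $x' = c + \Lambda x$ and $y' = c + \Lambda y$; by the notation introduced after Lemma~\ref{latticefull} we have $x' \curlywedge y' = c + \Lambda(x \wedge y)$ and $x' \curlyvee y' = c + \Lambda(x \vee y)$. It therefore suffices to verify, one row at a time, that each component of the relevant vector obeys the claimed inequality against $t_i$.

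Fix a row index $i$. By hypothesis this row has at most one positive entry $\Lambda_{ip} > 0$ and at most one negative entry $\Lambda_{iq} < 0$ (omit either term if the corresponding entry is absent). Setting $a = \Lambda_{ip} x_p$, $a' = \Lambda_{ip} y_p$, $b = \Lambda_{iq} x_q$ and $b' = \Lambda_{iq} y_q$, we have $(x')_i = c_i + a + b$ and $(y')_i = c_i + a' + b'$. The key observation is that a minimum passes unchanged through the positive coefficient $\Lambda_{ip}$ but is converted into a maximum by the negative coefficient $\Lambda_{iq}$ (and conversely for a maximum), so that
\[
(x' \curlywedge y')_i = c_i + \min(a,a') + \max(b,b'), \qquad (x' \curlyvee y')_i = c_i + \max(a,a') + \min(b,b').
\]

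With this identity each statement becomes a two-line estimate. For Part~1 the hypotheses read $a + b \geq t_i - c_i$ and $a' + b' \geq t_i - c_i$; selecting whichever of $x', y'$ realises $\min(a,a')$ (say it is $x'$, so $\min(a,a') = a$) and using $\max(b,b') \geq b$ gives $\min(a,a') + \max(b,b') \geq a + b \geq t_i - c_i$, i.e. $(x' \curlywedge y')_i \geq t_i$. Parts~2--4 follow the same template: in each case one selects the original vector whose positive- or negative-coefficient contribution realises the ``controlled'' extremum, bounds the remaining extremum by that same vector's matching contribution, and thereby reduces the sum to $(x')_i - c_i$ or $(y')_i - c_i$, which the hypothesis already bounds against $t_i - c_i$.

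I expect no genuine obstacle beyond careful sign bookkeeping. The one conceptual point worth highlighting is that the whole lemma rests on the row condition on $\Lambda$: it is precisely the restriction to at most one positive and one negative entry per row that guarantees each component of $c + \Lambda w$ is controlled by a single positive-coefficient term and a single negative-coefficient term, which is what lets the minima and maxima distribute cleanly into the forms displayed above. Degenerate rows --- those missing the positive entry, missing the negative entry, or identically zero --- are absorbed by setting the corresponding term to $0$ and require no separate treatment.
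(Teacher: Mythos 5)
Your proposal is correct and follows essentially the same route as the paper: both arguments work row by row, exploit the at-most-one-positive/one-negative structure to write each component via the coefficients of the positive and negative entries, and conclude by comparing against whichever of $(x')_i$, $(y')_i$ realises the relevant extremum. The only cosmetic difference is that you package the computation as a min/max distribution identity with a selection argument, whereas the paper enumerates the four cases explicitly; the content is identical.
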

\begin{proof}
Choose $x, y \in \mathbb{R}^r$ such that $x'=c+\Lambda x, y'=c+\Lambda y$. If row $i$ of $\Lambda$ contains a positive entry, then define $p(i)$ by $\Lambda_{i, p(i)} > 0$, and let $\alpha_i = \Lambda_{i, p(i)}$; otherwise let $p(i)=1$ and $\alpha_i = 0$. Similarly, if row $i$ of $\Lambda$ contains a negative entry, then define $q(i)$ by $\Lambda_{i, q(i)} < 0$, and let $\beta_i = -\Lambda_{i, q(i)}$; otherwise let $q(i) = 1$ and $\beta_i = 0$. Note that for each $i$, $\alpha_i, \beta_i \geq 0$.  We get
\[
(\Lambda x)_i = \alpha_i x_{p(i)} - \beta_i x_{q(i)} \qquad \mbox{and} \qquad (\Lambda y)_i = \alpha_i y_{p(i)} - \beta_i y_{q(i)}\,. 
\]

We prove Statements~1~and~2. Statements~3~and~4 follow analogously. 

{\bf 1.}  Let $z = x \wedge y$ so that, by Lemma~\ref{latticefull}, $z' = c+ \Lambda z = x' \curlywedge y'$.
\[
z'_i = c_i + (\Lambda z)_i = c_i + \alpha_i \min\{x_{p(i)}, y_{p(i)}\} - \beta_i \min\{x_{q(i)}, y_{q(i)}\}\,. 
\]
So either
\begin{eqnarray*}
z'_i & = & c_i + \alpha_i x_{p(i)} - \beta_i x_{q(i)} = c_i + (\Lambda x)_i \geq t_i, \quad \mbox{or}\\
z'_i & = & c_i + \alpha_i y_{p(i)} - \beta_i y_{q(i)} = c_i + (\Lambda y)_i \geq t_i, \quad \mbox{or}\\
z'_i & = & c_i + \alpha_i x_{p(i)} - \beta_i y_{q(i)} \geq c_i + (\Lambda x)_i \geq t_i\quad\mbox{(because}\quad y_{q(i)} \leq x_{q(i)}), \quad \mbox{or}\\
z'_i & = & c_i + \alpha_i y_{p(i)} - \beta_i x_{q(i)} \geq c_i + (\Lambda y)_i\geq t_i\quad\mbox{(because}\quad x_{q(i)} \leq y_{q(i)}).
\end{eqnarray*}
In every case, $z'_i \geq t_i$, so $z' \geq t$.

{\bf 2.} Let $z = x \vee y$ so that, by Lemma~\ref{latticefull}, $z' = c+ \Lambda z = x' \curlyvee y'$.
\[
z'_i= c_i + (\Lambda z)_i = c_i + \alpha_i \max\{x_{p(i)}, y_{p(i)}\} - \beta_i \max\{x_{q(i)}, y_{q(i)}\}\,. 
\]
So either
\begin{eqnarray*}
z'_i & = & c_i + \alpha_i x_{p(i)} - \beta_i x_{q(i)} = c_i + (\Lambda x)_i \geq t_i, \quad \mbox{or}\\
z'_i & = & c_i + \alpha_i y_{p(i)} - \beta_i y_{q(i)} = c_i + (\Lambda y)_i \geq t_i, \quad \mbox{or}\\
z'_i & = & c_i + \alpha_i x_{p(i)} - \beta_i y_{q(i)} \geq c_i + (\Lambda y)_i \geq t_i\quad\mbox{(because}\quad x_{p(i)} \geq y_{p(i)}), \quad \mbox{or}\\
z'_i & = & c_i + \alpha_i y_{p(i)} - \beta_i x_{q(i)} \geq c_i + (\Lambda x)_i\geq t_i\quad\mbox{(because}\quad y_{p(i)} \geq x_{p(i)}).
\end{eqnarray*}
In every case, $z'_i \geq t_i$, so $z' \geq t$.
\end{proof}\\

\begin{corollary}
\label{lat2}
Let $\Lambda$ be an $n \times r$ matrix with rank $r$, and such that each row contains no more than one positive entry and no more than one negative entry. Then, with the partial order defined by $K(\Lambda)$, $\mathcal{C}_{\Lambda, c}$ is a lattice for each $c \in \mathbb{R}^n_{\geq 0}$. Consequently, with assumption A3, each $\Lambda$-class of (\ref{eqmain}) is a lattice. 
\end{corollary}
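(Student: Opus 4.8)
The plan is to combine the two preceding lemmas, observing that Lemma~\ref{lat0} supplies precisely the ingredient needed to guarantee that $\mathcal{C}_{\Lambda,c}$ is closed under the lattice operations it inherits from the ambient coset $c+\mathrm{Im}(\Lambda)$. Being a lattice is a statement about closure of a \emph{set} under infima and suprema, so the work is not to produce $\curlywedge$ and $\curlyvee$ but to keep them inside $\mathcal{C}_{\Lambda,c}$.

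First I would fix $c \in \mathbb{R}^n_{\geq 0}$ and take arbitrary $x', y' \in \mathcal{C}_{\Lambda,c}$, so that $x', y' \in (c+\mathrm{Im}(\Lambda)) \cap \mathbb{R}^n_{\geq 0}$. Since $\Lambda$ has rank $r$, Lemma~\ref{latticefull} applies directly and shows that the infimum $x' \curlywedge y'$ and supremum $x' \curlyvee y'$ in the order defined by $K(\Lambda)$ exist and lie in the coset $c+\mathrm{Im}(\Lambda)$. The only remaining point is to check that these two vectors are in fact nonnegative, so that they belong to $\mathcal{C}_{\Lambda,c}$ and not merely to the coset.

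This is exactly the content of Lemma~\ref{lat0} applied with $t = 0$: the row hypothesis on $\Lambda$ in the corollary (each row with no more than one positive and no more than one negative entry) is identical to that of Lemma~\ref{lat0}, and since $x', y' \geq 0$, Statements~1~and~2 of that lemma give $x' \curlywedge y' \geq 0$ and $x' \curlyvee y' \geq 0$. Hence both the infimum and the supremum lie in $\mathcal{C}_{\Lambda,c}$, so this set is closed under $\curlywedge$ and $\curlyvee$ and is therefore a lattice. For the final assertion I would note that A3(i) forces $\Lambda$ to have rank $r$ and each of its rows to contain exactly one nonzero entry, hence at most one positive and at most one negative entry; the hypotheses just used are therefore met, so each $\Lambda$-class $\mathcal{C}_{\Lambda,x}$ with $x \in \mathbb{R}^n_{\geq 0}$ is a lattice.

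I do not expect any genuine obstacle here: the statement is an essentially immediate consequence of the two lemmas. The one conceptual point worth flagging is why Lemma~\ref{latticefull} alone is not enough — it guarantees existence of $\curlywedge$ and $\curlyvee$ only within the full coset, where nonnegativity can fail — so the nonnegativity conclusion of Lemma~\ref{lat0} (with $t=0$) is the key step that upgrades this to closure of the nonnegative slice $\mathcal{C}_{\Lambda,c}$.
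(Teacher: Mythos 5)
Your proposal is correct and follows essentially the same route as the paper, which simply invokes Statements~1~and~2 of Lemma~\ref{lat0} with $t$ taken to be the origin (existence of $\curlywedge$ and $\curlyvee$ in the coset coming from Lemma~\ref{latticefull}). Your additional remarks --- that the real content is nonnegativity of the infimum and supremum, and that A3(i) implies the row hypothesis --- are accurate elaborations of what the paper leaves implicit.
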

\begin{proof}
This follows from Statements~1~and~2 of Lemma~\ref{lat0} with $t$ as the origin. 
\end{proof}

\subsection{Each $\Lambda$-class has an infimum}

We show that including condition A5 ensures the existence of a unique minimal element on each $\Lambda$-class. 

\begin{lemma}
\label{minimal}
Consider any $n \times r$ matrix $\Lambda$ of rank $r$, and such that $K(\Lambda) \cap \mathbb{R}^n_{\leq 0} = \{0\}$. Then given any $c \in \mathbb{R}^n_{\geq 0}$, $\mathcal{C}_{\Lambda, c}$ contains a greatest lower bound in the ordering determined by $K(\Lambda)$.
\end{lemma}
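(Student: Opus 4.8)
The plan is to produce the greatest lower bound as an actual \emph{least element} of $\mathcal{C}_{\Lambda,c}$: a point $m \in \mathcal{C}_{\Lambda,c}$ with $m \preceq y$ for every $y \in \mathcal{C}_{\Lambda,c}$. Such an $m$ is automatically the greatest lower bound, since any lower bound of the class is in particular $\preceq m$. The search can be localised. Writing $D = \{y \in \mathcal{C}_{\Lambda,c} : y \preceq c\}$, I would first find the least element of $D$ and then show it bounds the whole class from below. The role of condition A5 is to make $D$ compact. Indeed $y \in D$ exactly when $c - y = \Lambda s$ for some $s \in \mathbb{R}^r_{\geq 0}$ with $y = c - \Lambda s \geq 0$, so $D = \{c - \Lambda s : s \in S\}$ with $S = \{s \in \mathbb{R}^r_{\geq 0} : \Lambda s \leq c\}$. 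If $S$ were unbounded it would contain a half-line $s_0 + t\bar s$ ($t \geq 0$) with $\bar s \neq 0$; keeping this in $S$ as $t \to \infty$ forces $\bar s \geq 0$ and $\Lambda\bar s \leq 0$, so $\Lambda\bar s \in K(\Lambda) \cap \mathbb{R}^n_{\leq 0} = \{0\}$ and hence $\bar s = 0$ (as $\Lambda$ has rank $r$), a contradiction. Thus $S$ is compact, and $D$, being its continuous image under $s \mapsto c - \Lambda s$, is compact and nonempty (it contains $c$).

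Next I would manufacture a linear functional strictly increasing in the order $\preceq$. Since $\Lambda$ has full column rank, $\Lambda^\top\Lambda$ is invertible; set $\ell = \Lambda(\Lambda^\top\Lambda)^{-1}\mathbf{1}$, so that $\Lambda^\top\ell = \mathbf{1}$. For any $v = \Lambda s \in K(\Lambda)$ with $s \in \mathbb{R}^r_{\geq 0}$ we then have $\langle \ell, v\rangle = \langle \mathbf{1}, s\rangle = \sum_k s_k \geq 0$, with equality only if $s = 0$, i.e.\ $v = 0$; hence $a \prec b \Rightarrow \langle \ell, a\rangle < \langle \ell, b\rangle$. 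Minimising the continuous map $y \mapsto \langle \ell, y\rangle$ over the compact set $D$ yields a minimiser $m \in D$. To see that $m$ is in fact the least element of $D$, I use that $\mathcal{C}_{\Lambda,c}$ is a lattice (Corollary~\ref{lat2}): for any $y \in D$ the meet $m \curlywedge y$ again lies in $\mathcal{C}_{\Lambda,c}$ and satisfies $m \curlywedge y \preceq m \preceq c$, so $m \curlywedge y \in D$; were $m \curlywedge y \neq m$ we would have $m \curlywedge y \prec m$ and thus $\langle \ell, m \curlywedge y\rangle < \langle \ell, m\rangle$, contradicting minimality. Hence $m \curlywedge y = m$, i.e.\ $m \preceq y$, for every $y \in D$.

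Finally I would upgrade ``least element of $D$'' to ``least element of the whole class''. Given arbitrary $y \in \mathcal{C}_{\Lambda,c}$, the meet $y \curlywedge c$ lies in $\mathcal{C}_{\Lambda,c}$ (Corollary~\ref{lat2}) and satisfies $y \curlywedge c \preceq c$ (Lemma~\ref{latticefull}), so $y \curlywedge c \in D$; therefore $m \preceq y \curlywedge c \preceq y$. Thus $m \in \mathcal{C}_{\Lambda,c}$ lies $\preceq$ every element of the class and is the required greatest lower bound. The main obstacle is the compactness step, where condition A5 enters precisely to exclude recession directions of $S$; the only other delicate point is the passage from $\ell$-minimality to $\preceq$-minimality, which is not automatic and genuinely relies on closure of the class under the meet $\curlywedge$ (Corollary~\ref{lat2}).
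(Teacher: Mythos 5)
Your proof is correct, and it reaches the conclusion by a genuinely different mechanism than the paper. The paper's own proof shows that every decreasing sequence in $\mathcal{C}_{\Lambda,c}$ is bounded (a normalisation argument using closedness of $K(\Lambda)$ together with $K(\Lambda)\cap\mathbb{R}^n_{\leq 0}=\{0\}$), concludes that every chain has a lower bound, invokes Zorn's Lemma to obtain a minimal element, and then uses the lattice property to conclude that this minimal element is unique (hence a least element). You instead localise to the lower set $D=\{y\in\mathcal{C}_{\Lambda,c}\,:\,y\preceq c\}$, prove it compact by excluding recession directions of the polyhedron $S$ --- the hypothesis $K(\Lambda)\cap\mathbb{R}^n_{\leq 0}=\{0\}$ enters here in exactly the same role it plays in the paper --- and then minimise the explicit functional $\ell=\Lambda(\Lambda^\top\Lambda)^{-1}\mathbf{1}$, which is strictly increasing for $\preceq$; the meet operation $\curlywedge$ then upgrades the $\ell$-minimiser to a least element of $D$ and finally of the whole class. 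What your route buys: it avoids Zorn's Lemma entirely, replacing it with Weierstrass on a compact set; it makes fully explicit the passage from ``minimal'' to ``least'', which in the paper is compressed into one sentence; and your $\ell$ plays, within this lemma, the role that $H=p_\Theta^T(\cdot)$ plays elsewhere in the paper, but is built from $\Lambda$ alone, which is appropriate since $\Theta$ is not among the hypotheses here. What the paper's route buys: brevity, and no appeal to the (standard, but unproved in your text) fact that an unbounded closed convex set contains a half-line. One caveat you share with the paper: both proofs invoke the lattice property of Corollary~\ref{lat2}, whose hypothesis --- at most one positive and one negative entry per row of $\Lambda$ --- is not among the stated hypotheses of Lemma~\ref{minimal}. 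This matters: for $\Lambda$ with rows $(1,1)$ and $(1,-1)$ one has rank $2$ and $K(\Lambda)\cap\mathbb{R}^2_{\leq 0}=\{0\}$, yet $\mathcal{C}_{\Lambda,c}=\mathbb{R}^2_{\geq 0}$ has no lower bound at all in the $K(\Lambda)$-order. So the lemma as literally stated needs the row condition (automatic under A3(i) in all applications); your proof is exactly as correct as the paper's on this point, neither more nor less.
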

\begin{proof}
Consider any chain (i.e., totally ordered subset) $C \subseteq \mathcal{C}_{\Lambda, c}$. Take any decreasing sequence $(x_i) \subseteq C$, i.e., $x_0 \succ x_1 \succ x_2 \succ \cdots$. We want to show that $(x_i)$ is bounded (and hence bounded below). Since the sequence is arbitrary, this will imply that $C$ is bounded below. Suppose, on the contrary, there exists a subsequence $(x_{i_k})$ with $|x_{i_k}|\to \infty$, implying that $|x_{i_k} - x_0|\to \infty$. Consider the bounded sequence $(x_{i_k} - x_{0})/|x_{i_k}|$, and by passing to a subsequence if necessary, assume that it is convergent, i.e., $(x_{i_k} - x_{0})/|x_{i_k}| \to y$. But $(x_{i_k} - x_{0})/|x_{i_k}| = x_{i_k}/|x_{i_k}| - x_0/|x_{i_k}|$, and since $x_{i_k}/|x_{i_k}|$ is nonnegative and of magnitude $1$, and $x_0/|x_{i_k}| \to 0$, it follows that $y$ is nonnegative and of magnitude $1$. On the other hand, $(x_{i_k} - x_{0})/|x_{i_k}| \in -K(\Lambda)$, and since $-K(\Lambda)$ is closed, $y \in -K(\Lambda)$. This implies that $-K(\Lambda)$ includes a nonzero, nonnegative vector, contradicting the assumptions of the theorem. Thus every chain has a lower bound. By Zorn's Lemma, $\mathcal{C}_{\Lambda, c}$ contains a minimal element. Since $\mathcal{C}_{\Lambda, c}$ is a lattice, this minimal element is unique. 
\end{proof}

\subsection{With reversibility, $\omega$-limit sets of nontrivial initial conditions lie in $\mathrm{int}(\mathbb{R}^n_{\geq 0})$}

The next claim is that if all reactions are reversible, no limit sets of System (\ref{eqmain}) satisfying conditions A1--A3 on a nontrivial stoichiometry class can intersect $\partial \mathbb{R}^n_{\geq 0}$. In other words, conditions A1--A3 and A6(i) imply condition A6(ii): 
\begin{lemma}
\label{lemsiphon}
Consider System (\ref{eqmain}) satisfying conditions A1--A3. Suppose additionally that all reactions are reversible, and let $S$ be some nonempty, proper subset of $\{1, \ldots, n\}$. If for some $c \gg 0$, $\mathcal{C}_{\Gamma,c}$ intersects $\overline{F}_S$, then $F_S$ is repelling. 
\end{lemma}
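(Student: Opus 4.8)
The plan is to prove the contrapositive: assuming $F_S$ is \emph{not} repelling, I would produce a constraint on stoichiometric compatibility that prevents any point of $\mathcal{C}_{\Gamma,c}$ from lying on $\overline{F}_S$ when $c \gg 0$. Throughout I write $\ell(i)$ for the unique column in which row $i$ of $\Lambda$ is nonzero (well defined by A3(i)), so that the factorisation $\Gamma=\Lambda\Theta$ gives the lockstep relation $\dot x_i = \Lambda_{i,\ell(i)}\,(\Theta v(x))_{\ell(i)}$.

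First I would exploit positive invariance of $\mathbb{R}^n_{\geq 0}$ (Lemma~10 in \cite{banajimierczynski}) together with the kinetic assumptions A2. At any $x \in F_S$ the coordinates indexed by $S^c$ vanish, so invariance forces $\dot x_i \geq 0$ for $i \in S^c$; moreover, writing $\dot x_i = \sum_j \Gamma_{ij} v_j(x)$ and using that under A2 a reaction cannot consume an absent species, each summand $\Gamma_{ij}v_j(x)$ is itself nonnegative. Hence $F_S$ fails to be repelling precisely when there is an $x \in F_S$ at which every such contribution vanishes, i.e.\ $\Gamma_{ij}v_j(x)=0$ for all $i \in S^c$ and all $j$. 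This is where the siphon notions of Appendix~\ref{appsiphon} enter: the vanishing condition says exactly that $S^c$ is a siphon, and reversibility (A6(i)) upgrades it to the two-sided statement that every reaction meeting $S^c$ meets it among both its reactants and its products. Via the lockstep relation this is equivalent to the net flux $(\Theta v(x))_k$ vanishing for every $k$ in the index set $\ell(S^c)$ of groups met by $S^c$.

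Next I would convert this structural information into a conserved functional. Condition A3(ii) supplies $w \gg 0$ with $w^T\Theta = 0$, and since $\Lambda$ has full column rank the map $u \mapsto \Lambda^T u$ is onto, so one may choose $u$ with $\Lambda^T u = w$; then $u^T\Gamma = u^T\Lambda\Theta = w^T\Theta = 0$, so that $u\cdot x$ is constant on stoichiometry classes (this $u$ is the increasing first integral used throughout). Writing any putative $p \in \mathcal{C}_{\Gamma,c}\cap\overline{F}_S$ as $p = c + \Lambda\eta$ with $\eta \in \mathrm{Im}\,\Theta$ (so $w^T\eta = 0$), the vanishing of $p_i$ for $i \in S^c$ forces $\eta_{\ell(i)} = -c_i/\Lambda_{i,\ell(i)}$, pinning down the components of $\eta$ on $\ell(S^c)$ with a definite sign determined by $c \gg 0$. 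The goal is then to combine these forced signs with the two-sided siphon condition and the balance $w^T\eta = 0$ to reach a contradiction, so that no such $p$ exists; this is the contrapositive of the asserted implication.

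The hard part will be this last step: matching the combinatorics of the siphon to the sign pattern of $\eta$ through the factorisation $\Gamma = \Lambda\Theta$. Because several species may share a group $k$ and the nonzero entries of $\Lambda$ are of mixed sign, the two-sided siphon condition does not translate transparently into single-signedness of the relevant $\eta_k$, and a single global conservation law such as $u\cdot x$ (whose coefficients inherit the signs of the entries of $\Lambda$) need not separate $c$ from $\overline{F}_S$ on its own. The careful bookkeeping that makes $w^T\eta = 0$ untenable --- equivalently, that produces from the siphon a conservation law which is positive on $S^c$ and hence takes a strictly positive value at $c \gg 0$ but value zero at $p$ --- is where the real work lies, and is precisely what the siphon machinery of Appendix~\ref{appsiphon} is designed to supply.
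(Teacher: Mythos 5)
Your reduction is correct as far as it goes, and it mirrors the paper's strategy: argue the contrapositive, use A2 plus reversibility (via K3) to show that a non-repelling face forces the two-sided condition that every reaction meeting $S^c$ has both a reactant and a product in $S^c$ (in the paper's language, $(I-P^S)\Gamma$ is a mixed-column matrix, i.e.\ $S^c$ is a siphon), and then separate $\mathcal{C}_{\Gamma,c}$ from $\overline{F}_S$ by a conservation law which is nonnegative, supported on $S^c$, and nonzero there. That final separation step is trivial once the conservation law exists: if $w$ satisfies $(I-P^S)w > 0$ and $\Gamma^T(I-P^S)w = 0$, then any $p \in \mathcal{C}_{\Gamma,c} \cap \overline{F}_S$ gives $0 < w^T(I-P^S)c = w^T(I-P^S)p = 0$, a contradiction; this is exactly the paper's Lemma~\ref{lemstoichclass}.

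The genuine gap is that the existence of such a $w$ is the entire mathematical content of the lemma, and your proposal explicitly leaves it unproved (``the hard part will be this last step'', ``where the real work lies''). This existence claim is the paper's Lemma~\ref{lemsiphon1}: if $Q = I-P^S$ and $Q\Lambda\Theta$ is mixed-column, then some $w$ with $Qw > 0$ satisfies $\Theta^T\Lambda^TQw = 0$. Its proof is not sign bookkeeping around the global integral but a case analysis exploiting A3: either two columns of $\Lambda^TQ$ are oppositely-signed multiples of the same unit vector, in which case $w$ is built directly in $\ker(\Lambda^TQ)$ with no reference to $\Theta$ at all; or $K(\Lambda^TQ)$ contains $\mathbb{R}^r_{\geq 0}$ or $\mathbb{R}^r_{\leq 0}$, in which case one solves $\Lambda^TQw = \pm y_\Theta$; and otherwise every column of $Q\Lambda$ is single-signed, the columns partition into classes $\{S_0,S_+,S_-\}$, and one-dimensionality of $\ker\Theta^T$ forces some column of $\Theta$ to couple two differently-signed classes, making a column of $Q\Lambda\Theta$ single-signed and contradicting the mixed-column hypothesis. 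Note in particular that the required conservation law is sometimes an element of $\ker\Lambda^T$ and is unrelated to $y_\Theta$ (the paper's worked example in Appendix~\ref{appsiphon} exhibits faces where $(I-P^{S_i})w_i \in \ker\Lambda^T$). This is why your concrete suggestion --- pinning down $\eta_{\ell(i)} = -c_i/\Lambda_{i,\ell(i)}$ for $i \in S^c$ and trying to contradict $y_\Theta^T\eta = 0$ --- cannot succeed in general: only the components of $\eta$ indexed by $\ell(S^c)$ are constrained, the remaining components are free, and, as you yourself observe, the single global integral does not separate $c$ from $\overline{F}_S$. Your plan correctly names the object that is needed, but constructing it from the siphon structure is precisely the step that must be supplied, and it is missing.
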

\vspace{0.3cm}

This claim involves a somewhat lengthy digression, and so is proved and illustrated in Appendix~\ref{appsiphon}. Note that the assumption that chemical reactions are reversible means, mathematically, that the rate function $v(x)$ fulfils conditions K1 and K3 in Appendix~\ref{appkinetic}. 

\subsection{An increasing first integral, stoichiometry classes are antichains}

In this subsection, make only the following assumptions on $\Gamma$, implied by assumption A3. Assume that $\Gamma = \Lambda \Theta$ where
\begin{enumerate}
\item[C1.] $\Lambda$ is an $n \times r$ matrix with rank $r$.
\item[C2.] $\Theta$ is an $r \times m$ matrix, $\mathrm{ker}(\Theta^T)$ is one dimensional, and there is a unit vector $y_\Theta \in \mathrm{ker}(\Theta^T)$ satisfying $y_\Theta \gg 0$. 
\end{enumerate}
Since $\Lambda^T$ has rank $r$, it defines a surjective map from $\mathbb{R}^n \to \mathbb{R}^r$, and so we can choose and fix a vector $p_\Theta$ such that $\Lambda^T p_\Theta = y_\Theta$. Note that $p_\Theta \in \mathrm{int}(K(\Lambda)^*)$, the dual cone to $K(\Lambda)$, since for any $z > 0$, $p_\Theta^T\Lambda z = y_\Theta^T z > 0$. Define the linear scalar function $H:\mathbb{R}^n \to \mathbb{R}$ by $H(x) = p_\Theta^Tx$. The next lemma shows that $H$ is increasing with respect to the order defined by $K(\Lambda)$. 
\begin{lemma}
\label{inc}
Consider $x, y \in \mathbb{R}^n$ such that $y \succ x$. Then $H(y) > H(x)$. 
\end{lemma}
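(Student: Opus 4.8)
The plan is to reduce the statement directly to the positivity computation already recorded in the remark immediately preceding the lemma. First I would set $w = y - x$. By definition of the order induced by $K(\Lambda)$, the hypothesis $y \succ x$ means precisely that $w \in K(\Lambda) \setminus \{0\}$. The definition of $K(\Lambda)$ then supplies some $z \in \mathbb{R}^r_{\geq 0}$ with $w = \Lambda z$, and since $\Lambda z = 0$ whenever $z = 0$, the requirement $w \neq 0$ forces $z \neq 0$; thus $z$ is nonnegative and nonzero, i.e. $z > 0$ in the standard order on $\mathbb{R}^r$.

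The second step is the linear-algebra identity exploiting the defining property $\Lambda^T p_\Theta = y_\Theta$ of $p_\Theta$. Since $H$ is linear,
\[
H(y) - H(x) = p_\Theta^T (y - x) = p_\Theta^T \Lambda z = (\Lambda^T p_\Theta)^T z = y_\Theta^T z.
\]

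Finally, because $y_\Theta \gg 0$ (every component strictly positive) while $z \geq 0$ is nonzero, the inner product $y_\Theta^T z$ is a sum of nonnegative terms at least one of which is strictly positive, whence $y_\Theta^T z > 0$. Combining with the displayed identity gives $H(y) > H(x)$, as claimed.

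There is no substantive obstacle here: the whole content is the observation that $p_\Theta$ lies in the interior of the dual cone $K(\Lambda)^*$, equivalently that $y_\Theta$ is strictly positive, which is exactly what condition C2 guarantees. The only point deserving a word of care is the deduction $z \neq 0$ from $w \neq 0$, and this is immediate since $z = 0$ would give $w = \Lambda z = 0$. Note that full column rank of $\Lambda$ (condition C1) is not actually needed for this particular implication, although it is what underlies the existence of a vector $p_\Theta$ with $\Lambda^T p_\Theta = y_\Theta$ in the first place.
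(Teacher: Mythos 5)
Your proof is correct and is essentially identical to the paper's: both write $y - x = \Lambda z$ with $z$ nonnegative and nonzero, use $\Lambda^T p_\Theta = y_\Theta$ to get $H(y)-H(x) = y_\Theta^T z$, and conclude positivity from $y_\Theta \gg 0$. Your extra remarks (that $z \neq 0$ follows since $\Lambda z = 0$ when $z=0$, and that rank of $\Lambda$ enters only through the existence of $p_\Theta$) are accurate elaborations of steps the paper leaves implicit.
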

\begin{proof}
Note that $y = x + \Lambda z$ where $z > 0$. Then 
\[
H(y)-H(x) = p_\Theta^Ty - p_\Theta^Tx = p_\Theta^T(\Lambda z) = y^T_\Theta z > 0,
\]
where the last inequality follows because $z > 0$, and $y_\Theta \gg 0$. 
\end{proof}\\

Next, we prove that restricting attention to a $\Lambda$-class, stoichiometry classes are precisely the level sets of $H$. 

\begin{lemma}
\label{Scchar}
For any $x \in \mathbb{R}^n_{\geq 0}$, $\mathcal{C}_{\Gamma,x} = \{y \sim^\Lambda x \,|\, H(y) = H(x)\}$.
\end{lemma}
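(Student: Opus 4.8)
The plan is to reduce the claimed set equality to a pointwise equivalence inside the common ambient $\Lambda$-class, and then to settle that equivalence by a short linear-algebra computation. Since $\Gamma = \Lambda\Theta$ gives $\mathrm{Im}(\Gamma) \subseteq \mathrm{Im}(\Lambda)$, both sides of the asserted identity are subsets of $\mathcal{C}_{\Lambda, x}$ (and in particular of $\mathbb{R}^n_{\geq 0}$), so it suffices to fix $y$ with $y \sim^\Lambda x$ and show that $y \in \mathcal{C}_{\Gamma, x}$, i.e.\ $y - x \in \mathrm{Im}(\Gamma)$, holds if and only if $H(y) = H(x)$.

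First I would use that $\Lambda$ has rank $r$ (condition C1), so the linear map $z \mapsto \Lambda z$ is injective; hence $y - x \in \mathrm{Im}(\Lambda)$ may be written uniquely as $y - x = \Lambda z$ for some $z \in \mathbb{R}^r$. Because $\mathrm{Im}(\Gamma) = \mathrm{Im}(\Lambda\Theta) = \Lambda(\mathrm{Im}(\Theta))$, injectivity of $\Lambda$ yields the first equivalence: $y - x \in \mathrm{Im}(\Gamma)$ if and only if $z \in \mathrm{Im}(\Theta)$. Next I would translate membership in $\mathrm{Im}(\Theta)$ into a single scalar condition via the standard orthogonality relation $\mathrm{Im}(\Theta) = (\ker(\Theta^T))^\perp$; since C2 states that $\ker(\Theta^T)$ is one-dimensional and spanned by $y_\Theta$, this gives $\mathrm{Im}(\Theta) = \{z \in \mathbb{R}^r : y_\Theta^T z = 0\}$, so that $z \in \mathrm{Im}(\Theta)$ if and only if $y_\Theta^T z = 0$. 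Finally, recalling $H(x) = p_\Theta^T x$ and $\Lambda^T p_\Theta = y_\Theta$, I would compute $H(y) - H(x) = p_\Theta^T(y - x) = p_\Theta^T \Lambda z = (\Lambda^T p_\Theta)^T z = y_\Theta^T z$, whence $H(y) = H(x)$ exactly when $y_\Theta^T z = 0$. Chaining the three equivalences closes the argument.

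This is essentially routine, so there is no serious obstacle; the points requiring care are purely bookkeeping. One must invoke injectivity of $\Lambda$ to pass cleanly between the condition $y - x \in \mathrm{Im}(\Gamma)$ on vectors in $\mathbb{R}^n$ and the condition $z \in \mathrm{Im}(\Theta)$ on the preimage, and one must read the right-hand set inside $\mathbb{R}^n_{\geq 0}$ (equivalently, inside $\mathcal{C}_{\Lambda,x}$), as is forced by $\mathcal{C}_{\Gamma,x} \subseteq \mathbb{R}^n_{\geq 0}$; without that restriction the affine set $\{y \sim^\Lambda x : H(y) = H(x)\}$ would protrude beyond the orthant. The orthogonal-complement identity $\mathrm{Im}(\Theta) = (\ker(\Theta^T))^\perp$ is the single structural fact in play, and it is precisely where the one-dimensionality of $\ker(\Theta^T)$ from C2 converts the subspace membership $z \in \mathrm{Im}(\Theta)$ into the one scalar equation $y_\Theta^T z = 0$ that $H$ is built to detect.
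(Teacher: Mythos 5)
Your proof is correct and takes essentially the same route as the paper: both arguments rest on the computation $p_\Theta^T\Lambda = y_\Theta^T$ together with the orthogonality identity $\mathrm{Im}(\Theta) = (\mathrm{ker}(\Theta^T))^{\perp}$ and the one-dimensionality of $\mathrm{ker}(\Theta^T)$, with your single chain of equivalences (via injectivity of $\Lambda$) merely repackaging the paper's two separate inclusions. Your side remark that the right-hand set must be read inside $\mathbb{R}^n_{\geq 0}$ is a reasonable clarification of the statement's notation and does not change the argument.
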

\begin{proof}
From the definitions, $\mathcal{C}_{\Gamma,x} \subseteq \mathcal{C}_{\Lambda,x}$. Note that $p_\Theta^T\Gamma = p_\Theta^T\Lambda\Theta = y_\Theta^T\Theta = 0$. So given $y \sim^\Gamma x$, and writing $y = x + \Gamma y'$, 
\[
H(y) = p_\Theta^Ty = p_\Theta^T(x+\Gamma y') = p_\Theta^Tx = H(x)\,.
\]
So $\mathcal{C}_{\Gamma,x} \subseteq \{y \sim^\Lambda x \,|\, H(y) = H(x)\}$. 

On the other hand, consider any $y \sim^\Lambda x$ such that $H(y) = H(x)$. Write $y = x + \Lambda y'$ and note that 
\[
0 = H(y) - H(x) = p_\Theta^T(y-x) = p_\Theta^T\Lambda y' = y_\Theta^T y'. 
\]
Since $y_\Theta$ spans $\mathrm{ker}(\Theta^T)$ and $\mathrm{Im}(\Theta) = [\mathrm{ker}(\Theta^T)]^{\perp}$, $y' \in \mathrm{Im}(\Theta)$. So $y' = \Theta y''$ for some $y''$, i.e., $y = x + \Lambda \Theta y'' = x + \Gamma y''$. Thus $y \sim^\Gamma x$. So $\mathcal{C}_{\Gamma,x} \supseteq \{y \sim^\Lambda x \,|\, H(y) = H(x)\}$, proving the claim. 
\end{proof}\\

\begin{corollary}
\label{unord1}
Each stoichiometry class of System (\ref{eqmain}) with assumption A3 is a $K(\Lambda)$-antichain.
\end{corollary}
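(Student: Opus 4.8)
The plan is to combine the two immediately preceding lemmas, which together say that the linear first integral $H$ is constant on each stoichiometry class yet strictly increasing in the order $\preceq$ defined by $K(\Lambda)$. First I would record that assumption~A3 supplies exactly the hypotheses C1 and C2 needed to invoke Lemmas~\ref{inc} and~\ref{Scchar}: by the remark on condition~A3, part~A3(i) forces $\Lambda$ to have rank $r$ (this is C1), while A3(ii) is precisely C2. Hence both lemmas apply to System~(\ref{eqmain}) under assumption~A3.

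Next I would fix an arbitrary stoichiometry class $\mathcal{C}_{\Gamma,e}$ and take any two points $x, y \in \mathcal{C}_{\Gamma,e}$. By Lemma~\ref{Scchar} we have $\mathcal{C}_{\Gamma,e} = \{w \sim^\Lambda e \mid H(w) = H(e)\}$, and therefore $H(x) = H(e) = H(y)$. In other words, $H$ takes a single value on the whole class, so $H$ cannot distinguish $x$ from $y$.

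To finish, I would argue by contradiction. Suppose some pair of points of the class satisfied $x \prec y$, i.e.\ $y \succ x$ in the order defined by $K(\Lambda)$. Then Lemma~\ref{inc} would give $H(y) > H(x)$, contradicting the equality $H(x) = H(y)$ established above. Hence no two points of $\mathcal{C}_{\Gamma,e}$ satisfy $x \prec y$, which is exactly the assertion that $\mathcal{C}_{\Gamma,e}$ is a $K(\Lambda)$-antichain; since $e$ was arbitrary this holds for every stoichiometry class.

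I do not anticipate any genuine obstacle: the corollary is a direct deduction from the two preceding lemmas, and the only step meriting explicit mention is the verification that A3 yields the weaker hypotheses C1 and C2 under which Lemmas~\ref{inc} and~\ref{Scchar} were proved, so that the machinery of the increasing first integral is available here.
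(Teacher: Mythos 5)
Your proposal is correct and follows essentially the same route as the paper: both proofs combine Lemma~\ref{Scchar} (the first integral $H$ is constant on each stoichiometry class) with Lemma~\ref{inc} ($H$ is strictly increasing in the order defined by $K(\Lambda)$) to conclude that no two points of a class can be order-related. Your explicit check that A3 yields C1 and C2 matches the paper's opening remark, and your contradiction argument is just the contrapositive of the paper's phrasing.
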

\begin{proof}
Note that condition A3 implies conditions C1 and C2. By Lemma~\ref{Scchar}, $x \sim^\Gamma y$ implies $H(x) = H(y)$. On the other hand, by Lemma~\ref{inc}, $H(x) = H(y)$ implies $x \not \prec y$, and it follows that $\mathcal{C}_{\Gamma,y}$ is a $K(\Lambda)$-antichain. 
\end{proof}\\

{\bf Notation.} Given the characterisation in Lemma~\ref{Scchar}, when we restrict attention to some $\Lambda$-class $\mathcal{C}_{\Lambda, c}$, we can refer to the stoichiometry class in $\mathcal{C}_{\Lambda, c}$ on which $H(\cdot)$ takes the value $h$ as $\mathcal{C}_{\Lambda, c}^h$, i.e. define $\mathcal{C}_{\Lambda, c}^h = \{y \in \mathcal{C}_{\Lambda, c}\,|\, H(y) = h\}$.

\subsection{$K$-quasipositivity}

{\bf Notation for matrices.} Given any matrix $M$, we refer to the $k$th column of $M$ as $M_k$ and the $k$th row of $M$ as $M^k$. It is convenient to phrase results in terms of ``qualitative classes'' of matrices and related ideas. Given a matrix $M$, the matrix-sets $\mathcal{Q}(M)$, $\mathcal{Q}_0(M)$ and $\mathcal{Q}_1(M)$ are defined in Appendix~\ref{appqualclass}.\\

{\bf Terminology.} Given a CCP cone $K \subseteq \mathbb{R}^n$, and an $n \times n$ matrix $J$, we say that $J$ is {\bf $K$-quasipositive} if there exists $\alpha \in \mathbb{R}$ such that $J + \alpha I \colon K \to K$. If in fact there exists $\alpha \in \mathbb{R}$ such that $J + \alpha I \colon K \to K$ and $J + \alpha I$ is $K$-irreducible (namely, if $F$ is a closed face of $K$ and $J + \alpha I \colon F \to F$, then either $F=\{0\}$ or $F=K$), then we say that $J$ is {\bf strictly $K$-quasipositive}.\\

The following lemma on $K$-quasipositivity of a special class of rank $1$ matrices appears as Theorems~5.3~and~5.4 in \cite{banajidynsys}. 

\begin{lemma}
\label{suff2}
Consider a vector $\gamma$, any vector $v \in \mathcal{Q}_0(-\gamma)$ and a CCP cone $K$ with extremals $\{y_i\}$. Define two conditions as follows:
\begin{itemize}
\item[A.] $\gamma = ry_k$ for some $k$, and either (i) $r > 0$ and $y_j \in \mathcal{Q}_1(-\gamma)$ for all $j \not = k$ or (ii) $r < 0$ and $y_j \in \mathcal{Q}_1(\gamma)$ for all $j \not = k$.
\item[B.] There exist $r_1, r_2 > 0$ and $y_i \in \mathcal{Q}_1(\gamma)\backslash\mathcal{Q}_1(-\gamma)$, $y_j \in \mathcal{Q}_1(-\gamma)\backslash\mathcal{Q}_1(\gamma)$ such that $\gamma = r_1y_i - r_2y_j$. Moreover, $y_k \in (\mathcal{Q}_1(\gamma) \cap \mathcal{Q}_1(-\gamma))$ for $k \not \in \{i,j\}$. 
\end{itemize}
If either $\gamma = 0$, or A or B holds then $\gamma v^T$ is $K$-quasipositive. 
\end{lemma}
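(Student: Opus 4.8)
The plan is to exploit the rank-one structure of $\gamma v^T$ together with the fact that $K$-quasipositivity need only be checked on the extremals of $K$. For any $x$ we have $(\gamma v^T + \alpha I)x = (v^T x)\,\gamma + \alpha x$, and since $x \mapsto (v^Tx)\gamma + \alpha x$ is linear while the closed convex cone $K$ is the (closed) conical hull of its extremals $\{y_i\}$, it suffices to exhibit a single $\alpha \in \mathbb{R}$ for which $(v^T y_l)\,\gamma + \alpha y_l \in K$ for every extremal $y_l$: a nonnegative combination of such images then lies in $K$, and limits stay in $K$ by closedness. Thus the whole argument reduces to controlling, for each extremal $y_l$, the scalar $c_l := v^T y_l$ and the vector $c_l\gamma + \alpha y_l$.

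First I would dispose of the case $\gamma = 0$, where $\gamma v^T + \alpha I = \alpha I$ maps $K$ into $K$ for any $\alpha \ge 0$. For the remaining cases the key point is that the hypotheses $v \in \mathcal{Q}_0(-\gamma)$ and the stated memberships of the extremals in $\mathcal{Q}_1(\pm\gamma)$ (Appendix~\ref{appqualclass}) pin down the signs of the inner products $c_l = v^T y_l$. Since $v$ is sign-compatible with $-\gamma$, with support contained in that of $\gamma$, a coordinatewise comparison yields: $y_l \in \mathcal{Q}_1(-\gamma) \Rightarrow c_l \ge 0$; $\,y_l \in \mathcal{Q}_1(\gamma) \Rightarrow c_l \le 0$; and $y_l \in \mathcal{Q}_1(\gamma)\cap\mathcal{Q}_1(-\gamma) \Rightarrow c_l = 0$, the last because such a $y_l$ must vanish on the support of $\gamma$, which contains the support of $v$.

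I would then treat condition A, say case (i) with $\gamma = ry_k$, $r>0$. For $l = k$ the image is $(c_k r + \alpha)y_k$, nonnegative for $\alpha$ large; for $l \ne k$ it is $c_l r\,y_k + \alpha y_l$, and the sign fact $c_l \ge 0$ (from $y_l \in \mathcal{Q}_1(-\gamma)$) with $r,\alpha > 0$ makes this a nonnegative combination of the extremals $y_k, y_l$, hence in $K$. Case (ii), with $r<0$ and $y_l \in \mathcal{Q}_1(\gamma)$ so that $c_l \le 0$, is identical after the sign flip. For condition B, writing $\gamma = r_1 y_i - r_2 y_j$, the sign facts give $c_i \le 0$, $c_j \ge 0$ and $c_l = 0$ for $l \notin \{i,j\}$. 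Substituting into $c_l\gamma + \alpha y_l$ and collecting coefficients, the image of $y_l$ ($l\notin\{i,j\}$) is simply $\alpha y_l$; the image of $y_i$ is $(c_i r_1 + \alpha)y_i - c_i r_2\,y_j$; and the image of $y_j$ is $c_j r_1\,y_i + (\alpha - c_j r_2)y_j$. Each is a combination of $y_i, y_j$ whose coefficients are all nonnegative once $\alpha > \max\{0,\, r_2 c_j,\, -r_1 c_i\}$.

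The main obstacle is the bookkeeping in condition B. Unlike condition A, where $\gamma$ is a single extremal and each image spreads over at most two extremals with an obviously nonnegative cross term, here the image of $y_i$ carries a coefficient $-c_i r_2$ on $y_j$ and the image of $y_j$ carries a coefficient $c_j r_1$ on $y_i$; these ``dangerous'' cross terms are precisely the ones controlled by $c_i \le 0$ and $c_j \ge 0$. Verifying that $y_i \in \mathcal{Q}_1(\gamma)\setminus\mathcal{Q}_1(-\gamma)$ and $y_j \in \mathcal{Q}_1(-\gamma)\setminus\mathcal{Q}_1(\gamma)$ deliver exactly these signs, and that every remaining extremal contributes nothing to $v^T y_l$, is where the qualitative-class definitions must be applied with care.
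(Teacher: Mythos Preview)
Your argument is correct: the reduction to extremals, the sign analysis of $c_l = v^T y_l$ via the definitions of $\mathcal{Q}_0$ and $\mathcal{Q}_1$, and the explicit choice of $\alpha$ in each case all go through cleanly. Note that the paper does not actually prove this lemma in-text; it simply handles $\gamma = 0$ and then cites Theorems~5.3~and~5.4 of \cite{banajidynsys}, remarking that the calculations there extend from proper cones to CCP cones. What you have written is precisely the ``straightforward calculation'' alluded to, so your proof is a faithful fleshing-out of the referenced argument rather than a different route.
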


\begin{proof}
The case $\gamma = 0$ is trivial. For the remaining cases, see \cite{banajidynsys}. Although cones were assumed in that reference to be proper, the proofs are straightforward calculations which apply for any CCP cone.
\end{proof}\\

An immediate corollary is: 
\begin{corollary}
\label{suff3}
Consider a matrix $\Gamma$, any matrix $V \in \mathcal{Q}_0(-\Gamma^T)$ and some CCP cone $K$. If each nonzero column of $\Gamma$ satisfies either condition A or condition B of Lemma~\ref{suff2}, then $\Gamma V$ is $K$-quasipositive. 
\end{corollary}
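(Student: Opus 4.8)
The plan is to expand the matrix product $\Gamma V$ as a sum of rank-one matrices, one for each column of $\Gamma$, apply Lemma~\ref{suff2} to each summand, and then exploit the fact that $K$-quasipositivity is preserved under addition. Writing $\Gamma_k$ for the $k$th column of $\Gamma$ and $V^k$ for the $k$th row of $V$, the standard outer-product expansion of a matrix product gives $\Gamma V = \sum_{k} \Gamma_k V^k$, where each term $\Gamma_k V^k = \Gamma_k (v_k)^T$ with $v_k = (V^k)^T$ is an $n \times n$ matrix of rank at most one. This is exactly the form $\gamma v^T$ treated in Lemma~\ref{suff2}.

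First I would check that each summand meets the hypotheses of Lemma~\ref{suff2}. The $k$th row of $-\Gamma^T$ is precisely $-\Gamma_k^T$, so the entrywise condition $V \in \mathcal{Q}_0(-\Gamma^T)$ says, row by row, that $v_k \in \mathcal{Q}_0(-\Gamma_k)$ for every $k$. By assumption each nonzero column $\Gamma_k$ satisfies condition A or condition B, while zero columns contribute the zero matrix; in either case Lemma~\ref{suff2} applies, and so each $\Gamma_k (v_k)^T$ is $K$-quasipositive.

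The remaining step, and the only substantive one, is to observe that a sum of $K$-quasipositive matrices is again $K$-quasipositive. For each $k$ there is $\alpha_k \in \mathbb{R}$ with $(\Gamma_k (v_k)^T + \alpha_k I)\colon K \to K$. Since $K$ is a convex cone it is closed under addition, so a sum of maps that each send $K$ into $K$ again sends $K$ into $K$; summing the shifted terms therefore gives $(\Gamma V + (\sum_k \alpha_k)I)\colon K \to K$. Setting $\alpha = \sum_k \alpha_k$ exhibits $\Gamma V$ as $K$-quasipositive, which completes the argument. No obstacle of substance arises: the content lies entirely in Lemma~\ref{suff2}, and the corollary is immediate because the defining shift constants simply add.
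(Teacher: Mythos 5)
Your proposal is correct and follows essentially the same route as the paper's own proof: expand $\Gamma V = \sum_k \Gamma_k V^k$ into rank-one terms, apply Lemma~\ref{suff2} to each (noting that $V \in \mathcal{Q}_0(-\Gamma^T)$ gives $(V^k)^T \in \mathcal{Q}_0(-\Gamma_k)$ row by row), and sum the shift constants $\alpha_k$. The only difference is that you spell out the row-wise verification of the hypotheses and the cone-additivity step, which the paper leaves implicit.
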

\begin{proof}
$\Gamma V = \sum_i\Gamma_iV^i$, and by Theorem~\ref{suff2} for each $i$ there exists $\alpha_i$ such that $\Gamma_iV^i + \alpha_i I:K \to K$. Clearly, defining $\alpha = \sum_i \alpha_i$, $\Gamma V + \alpha I:K \to K$. 
\end{proof}\\

This leads to:

\begin{lemma}
\label{quasipos}
Let $\Lambda$ be an $n \times r$ matrix with rank $r$, and no more than one nonzero entry in each row. Let $\Theta$ be an $r \times m$ matrix such that each column of $\Theta$ contains no more than one positive entry and no more than one negative entry. Let $\Gamma = \Lambda\Theta$, $V \in \mathcal{Q}_0(-\Gamma^T)$. Then $\Gamma V$ is $K(\Lambda)$-quasipositive. 
\end{lemma}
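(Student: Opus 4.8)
The plan is to reduce the statement to Corollary~\ref{suff3}, which guarantees that $\Gamma V$ is $K(\Lambda)$-quasipositive as soon as $V \in \mathcal{Q}_0(-\Gamma^T)$ (which is assumed) and every nonzero column of $\Gamma$ satisfies condition~A or condition~B of Lemma~\ref{suff2} relative to the cone $K = K(\Lambda)$. Since $\Lambda$ has rank $r$, its columns are linearly independent and are precisely the $r$ extremals of $K(\Lambda)$ (as recorded in the subsection on cones); write $y_i = \Lambda_i$. Thus the entire task is to verify, column by column, that each nonzero $\Gamma_j$ falls into case~A or case~B.

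Two structural observations drive everything. First, because each column of $\Theta$ has at most one positive and at most one negative entry, the $j$th column can be written $\Theta_j = a e_i - b e_k$ with $i \neq k$ and $a, b \geq 0$, whence $\Gamma_j = \Lambda\Theta_j = a\,y_i - b\,y_k$; moreover, since $\Lambda$ is injective, $\Gamma_j \neq 0$ if and only if $\Theta_j \neq 0$. Second, and this is the crucial point, because each row of $\Lambda$ contains at most one nonzero entry, the extremals $y_1, \ldots, y_r$ have pairwise disjoint supports. This disjointness is exactly what makes the qualitative-class conditions in Lemma~\ref{suff2} hold automatically.

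First I would treat the case where $\Theta_j$ has a single nonzero entry, so that $\gamma = \Gamma_j = r\,y_\ell$ for one extremal $y_\ell$ and a scalar $r \neq 0$; this is the setting of condition~A. For any other extremal $y_j$, disjointness of supports gives $(y_j)_s (y_\ell)_s = 0$ in every coordinate $s$, so the required membership ($y_j \in \mathcal{Q}_1(-\gamma)$ when $r > 0$, and $y_j \in \mathcal{Q}_1(\gamma)$ when $r < 0$) holds trivially, and condition~A is satisfied. Then I would treat the case where $\Theta_j$ has both a positive and a negative entry, so $\gamma = \Gamma_j = a\,y_i - b\,y_k$ with $a, b > 0$ and $i \neq k$; this is the setting of condition~B, taking $r_1 = a$ and $r_2 = b$. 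On the support of $y_i$ the vector $\gamma$ agrees in sign with $y_i$, on the support of $y_k$ it agrees in sign with $-y_k$, and $\gamma$ vanishes off both supports; using disjointness (together with $y_i, y_k \neq 0$) one checks directly that $y_i \in \mathcal{Q}_1(\gamma)\setminus\mathcal{Q}_1(-\gamma)$, that $y_k \in \mathcal{Q}_1(-\gamma)\setminus\mathcal{Q}_1(\gamma)$, and that $y_\ell \in \mathcal{Q}_1(\gamma)\cap\mathcal{Q}_1(-\gamma)$ for every remaining $\ell$, since the support of $y_\ell$ misses that of $\gamma$ entirely. Hence condition~B holds.

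With every nonzero column of $\Gamma$ shown to satisfy A or B, Corollary~\ref{suff3} delivers the conclusion. I expect the only real obstacle to be bookkeeping: correctly matching the sign data of $\gamma = a\,y_i - b\,y_k$ to the classes $\mathcal{Q}_1(\gamma)$ and $\mathcal{Q}_1(-\gamma)$, and confirming that the disjoint-support structure of the extremals forces each membership and non-membership demanded by Lemma~\ref{suff2}. Everything else is an immediate consequence of the two structural observations above.
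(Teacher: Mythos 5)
Your proposal is correct and follows essentially the same route as the paper: reduce to Corollary~\ref{suff3} and verify, column by column, that a single-signed column of $\Theta$ yields condition~A and a mixed column yields condition~B of Lemma~\ref{suff2}. The only difference is that you spell out the verification (via the disjoint-support observation for the columns of $\Lambda$) which the paper leaves to the reader, and your bookkeeping there is accurate.
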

\begin{proof}
We need to show that for each $i$, $\Gamma_i$ fulfils the conditions of Lemma~\ref{suff2}. The trivial case $\Theta_i = 0$ implies $\Gamma_i=0$. The reader can easily confirm that if $\Theta_i$ contains a single nonzero entry, then $\Gamma_i$ satisfies condition A, and if $\Theta_i$ contains a positive entry and a negative entry, then $\Gamma_i$ satisfies condition B. 
\end{proof}\\

\begin{corollary}
\label{corqp}
Consider System (\ref{eqmain}) with assumptions A1--A3. At each $x \in \mathbb{R}^n_{\geq 0}$, the Jacobian matrix $\Gamma Dv(x)$ is $K(\Lambda)$-quasipositive. 
\end{corollary}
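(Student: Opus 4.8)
The plan is to read this off directly from Lemma~\ref{quasipos} by verifying that all of its hypotheses are met at each point $x$, with the choice $V = Dv(x)$. Indeed, the corollary is nothing more than that lemma specialised to the data supplied by assumptions A1--A3, so the task reduces to three checks: that $\Lambda$ satisfies the structural requirements on the first factor, that $\Theta$ satisfies those on the second, and---crucially---that $Dv(x) \in \mathcal{Q}_0(-\Gamma^T)$ for every $x \in \mathbb{R}^n_{\geq 0}$.

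The first two checks are immediate from A3. By A3(i), $\Lambda$ is $n \times r$ with exactly one nonzero entry per row and no zero column; in particular it has at most one nonzero entry per row and, as recorded in the remark on A3, has rank $r$. By A3(ii), $\Theta_{ij}\Theta_{kj} \le 0$ for $i \neq k$, so within any fixed column no two distinct entries share a sign; hence each column of $\Theta$ has at most one positive and at most one negative entry. Thus $\Lambda$ and $\Theta$ meet precisely the hypotheses that Lemma~\ref{quasipos} imposes on them, and $\Gamma = \Lambda\Theta$ holds by A3.

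The substantive step is the membership $Dv(x) \in \mathcal{Q}_0(-\Gamma^T)$, and this is where I expect the genuine content to lie; it is furnished entirely by the kinetic assumption A2. Noting that the $(j,i)$ entry of $Dv(x)$ is $\partial v_j/\partial x_i$ while the $(j,i)$ entry of $-\Gamma^T$ is $-\Gamma_{ij}$, I would argue from conditions K1--K3 that $\partial v_j/\partial x_i$ vanishes whenever $\Gamma_{ij}=0$ and otherwise either vanishes or carries the sign $-\mathrm{sign}(\Gamma_{ij})$. Informally this is the content of ``reactions need all their reactants to proceed'' and ``increasing a reactant concentration speeds up a reaction'' (a nonnegative contribution where $\Gamma_{ij}<0$), together with the fact that increasing a product concentration can only slow the net rate of a reversible reaction (a nonpositive contribution where $\Gamma_{ij}>0$), and no dependence on species absent from reaction $j$. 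Once this sign-compatibility is recorded, $Dv(x)$ lies in the qualitative-class closure $\mathcal{Q}_0(-\Gamma^T)$ for each $x$.

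With all three hypotheses in hand, Lemma~\ref{quasipos} applied with $V = Dv(x)$ gives that $\Gamma Dv(x)$ is $K(\Lambda)$-quasipositive at the chosen $x$; since $x \in \mathbb{R}^n_{\geq 0}$ was arbitrary, the conclusion holds at every point of the nonnegative orthant. The only real obstacle is pinning down the exact statements of conditions K1--K3 and carrying out the sign bookkeeping against the reactant/product convention for $\Gamma$ set out in the remark on A1; the remainder is a direct substitution into the already-established lemma.
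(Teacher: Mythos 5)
Your proposal is correct and follows essentially the same route as the paper: check that A3 supplies the hypotheses of Lemma~\ref{quasipos} on $\Lambda$ and $\Theta$, that A2 gives $Dv(x) \in \mathcal{Q}_0(-\Gamma^T)$, and apply the lemma with $V = Dv(x)$. The only simplification you missed is that the step you flag as substantive is immediate: condition K1 in Appendix~\ref{appkinetic} literally states $Dv(x) \in \mathcal{Q}_0(-\Gamma^{T})$ at each $x$, so no sign bookkeeping from K2--K3 is needed.
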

\begin{proof}
Assumption A2 implies that $Dv(x)\in \mathcal{Q}_0(-\Gamma^T)$ at each $x \in \mathbb{R}^n_{\geq 0}$. Assumption A3 implies the assumptions of Lemma~\ref{quasipos}, which now gives the result.
\end{proof}

\subsection{Strict $K$-quasipositivity}

The aim in this section is to infer the following for System (\ref{eqmain}) satisfying conditions A1--A3: at any point $x$ where the DSR graph $G(x)$ is strongly connected, the Jacobian $J(x)$ is strictly $K(\Lambda)$-quasipositive (i.e., there exists $\alpha \in \mathbb{R}$ such that $J(x) + \alpha I\colon K(\Lambda) \to K(\Lambda)$ and $J(x) + \alpha I$ is $K(\Lambda)$-irreducible). The following is an immediate consequence of Theorem~1 in \cite{banajiburbanks}.

\begin{lemma}
\label{lemDSR}
Let $K \subseteq \mathbb{R}^n$ be a CCP cone, $A$ an $n \times m$ matrix, and $B'$ an $m \times n$ matrix. Suppose that $\mathrm{Im}\,A \not \subseteq \mathrm{span}\,F$ for any nontrivial face $F$ of $K$, and that $AB$ is $K$-quasipositive for each $B \in \mathcal{Q}_0(B')$. Then whenever the DSR graph $G_{A, B}$ is strongly connected, $AB$ is strictly $K$-quasipositive.
\end{lemma}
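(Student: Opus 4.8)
The plan is to deduce the statement directly from Theorem~1 of \cite{banajiburbanks}, after separating the two requirements packaged into the phrase ``strictly $K$-quasipositive''. By definition I must produce a single $\alpha$ for which $AB + \alpha I$ both maps $K$ into $K$ and is $K$-irreducible. The first requirement is essentially given: the chosen $B$ (the one whose DSR graph $G_{A,B}$ is strongly connected) lies in $\mathcal{Q}_0(B')$, so the standing quasipositivity hypothesis furnishes some $\alpha_0$ with $(AB + \alpha_0 I)\colon K \to K$. Since $I\colon K \to K$, any $\alpha \ge \alpha_0$ still yields a $K$-positive shift, so the only genuine content is $K$-irreducibility.

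First I would record the elementary observation that, once $AB + \alpha_0 I$ is $K$-positive, the set of faces $F$ of $K$ satisfying $(AB + \alpha I)(F) \subseteq F$ does not depend on the particular $\alpha \ge \alpha_0$: writing $(AB + \alpha I)x = (AB + \alpha_0 I)x + (\alpha - \alpha_0)x$, both summands lie in $K$ whenever $x \in F$, so the defining property of a face (if $a + b \in F$ with $a, b \in K$, then $a, b \in F$) transfers invariance between the two shifts. Consequently strict $K$-quasipositivity is equivalent to $K$-quasipositivity together with the statement that \emph{no} nontrivial face of $K$ is invariant under $AB + \alpha_0 I$, and it is this last statement that must be established.

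That statement is exactly the output of Theorem~1 of \cite{banajiburbanks}, so the remaining work is to verify that its hypotheses are met here. Requiring $AB$ to be $K$-quasipositive for \emph{every} $B \in \mathcal{Q}_0(B')$ supplies the qualitative robustness that lets the conclusion depend only on the sign/zero pattern of $B$, equivalently only on $G_{A,B}$; and the hypothesis $\mathrm{Im}\,A \not\subseteq \mathrm{span}\,F$ for every nontrivial face $F$ is the nondegeneracy condition excluding faces that would be invariant for purely structural reasons, since $\mathrm{Im}\,A \subseteq \mathrm{span}\,F$ would force $AB(F) \subseteq \mathrm{Im}\,A \subseteq \mathrm{span}\,F$ and allow $F$ to be invariant irrespective of connectivity. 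With both hypotheses in hand, strong connectedness of $G_{A,B}$ rules out any invariant nontrivial face.

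The step I expect to be the main obstacle is matching the ambient generality rather than the graph-theoretic core, which is imported wholesale: Theorem~1 of \cite{banajiburbanks} is most naturally phrased for a \emph{proper} cone, whereas here $K$ is only CCP and may fail to be full-dimensional (in the intended application $K = K(\Lambda)$ is $r$-dimensional in $\mathbb{R}^n$). I would close this gap by restricting to $W = \mathrm{span}\,K$: from $(AB + \alpha_0 I)(K) \subseteq K$ one obtains $(AB + \alpha_0 I)(W) \subseteq W$ and hence $AB(W) \subseteq W$, so the whole configuration descends to $W$, on which $K$ is proper while its faces, and the hypothesis $\mathrm{Im}\,A \not\subseteq \mathrm{span}\,F$, are unchanged. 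The cited theorem then applies verbatim, as with the proper-versus-CCP extension already used in the proof of Lemma~\ref{suff2}.
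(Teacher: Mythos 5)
Your proposal takes essentially the same route as the paper, which offers no argument at all beyond declaring the lemma an immediate consequence of Theorem~1 of \cite{banajiburbanks}; your first two paragraphs supply correct glue for that deduction (in particular, the observation that the collection of faces invariant under $AB+\alpha I$ is independent of $\alpha \geq \alpha_0$, so strictness reduces to the absence of invariant nontrivial faces, which is what the cited theorem provides). One caution on your final paragraph: the proper-versus-CCP gap you anticipate does not arise -- the cited theorem is formulated for closed, convex, pointed cones with $K$-irreducibility defined via faces, exactly as here, which is why the paper can call the deduction immediate -- and your proposed fix would not work as stated if it were needed, since restricting to $W = \mathrm{span}\,K$ forces one to replace $A, B$ by $P_W A$ and $\left.B\right|_W$, whose sign patterns in coordinates on $W$ bear no clean relation to $\mathcal{Q}_0(B')$ or to $G_{A,B}$, so neither the qualitative-robustness hypothesis nor strong connectedness descends ``verbatim.''
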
\\

We wish to apply this result with $A = \Gamma$, $B' = -\Gamma^T$, and $K = K(\Lambda)$. We have already seen that $AB$ is $K$-quasipositive for all $B\in \mathcal{Q}_0(-\Gamma^T)$, and so we need:

\begin{lemma}
\label{lemnotinspan}
Consider an $n \times r$ matrix $\Lambda$ with rank $r$ and an $r \times m$ matrix $\Theta$ with no row of zeros. Let $\Gamma = \Lambda\Theta$. Then $\mathrm{Im}\,\Gamma \not \subseteq \mathrm{span}\,F$ for any nontrivial face $F\subseteq K(\Lambda)$.
\end{lemma}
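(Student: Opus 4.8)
The plan is to exploit the fact that $K(\Lambda)$ is a \emph{simplicial} cone -- it is generated by the $r$ linearly independent columns of $\Lambda$ -- so that its faces, and hence their spans, are completely understood, and then to transport the whole question back to $\mathbb{R}^r$ using the injectivity of $\Lambda$.

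First I would record the face structure of $K(\Lambda)$. Since $\Lambda$ has rank $r$, its columns $\Lambda_1, \ldots, \Lambda_r$ are linearly independent and are precisely the extremal generators of $K(\Lambda)$; the cone is therefore linearly isomorphic, via $\Lambda$, to the orthant $\mathbb{R}^r_{\geq 0}$. Consequently every face of $K(\Lambda)$ has the form $F_I = \{\Lambda y : y \in \mathbb{R}^r_{\geq 0},\ y_j = 0 \text{ for } j \notin I\}$ for some $I \subseteq \{1, \ldots, r\}$, with span $\mathrm{span}\,F_I = \Lambda(W_I)$, where $W_I = \mathrm{span}\{e_i : i \in I\} \subseteq \mathbb{R}^r$. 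The face is nontrivial precisely when $I$ is a proper nonempty subset of $\{1, \ldots, r\}$, so that $I^c$ is nonempty.

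Next I would reduce the containment to $\mathbb{R}^r$. Writing $\mathrm{Im}\,\Gamma = \Lambda(\mathrm{Im}\,\Theta)$, the inclusion $\mathrm{Im}\,\Gamma \subseteq \mathrm{span}\,F_I$ becomes $\Lambda(\mathrm{Im}\,\Theta) \subseteq \Lambda(W_I)$. Because $\Lambda$ has full column rank it admits a left inverse $\Lambda'$ with $\Lambda'\Lambda = I$, and applying $\Lambda'$ shows the inclusion is equivalent to $\mathrm{Im}\,\Theta \subseteq W_I$. Arguing by contradiction, suppose this held for some nontrivial face $F_I$. Then every column of $\Theta$ lies in $W_I$, i.e., has vanishing entries in all rows indexed by $I^c$; equivalently, the $j$th row of $\Theta$ is zero for every $j \in I^c$. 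Since $I^c \neq \emptyset$, this contradicts the hypothesis that $\Theta$ has no row of zeros, completing the argument.

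I expect the only real content to lie in the first step: identifying the faces of $K(\Lambda)$ and their spans. Once the simplicial structure is made explicit and the injectivity of $\Lambda$ is used to pull the inclusion back to $\mathbb{R}^r$, the no-zero-row hypothesis closes the proof immediately. The one point requiring care is the observation that a nontrivial face forces $I^c$ to be nonempty, since this is exactly what makes the zero-row obstruction bite.
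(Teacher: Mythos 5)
Your proof is correct and takes essentially the same route as the paper's: both arguments rest on the simplicial structure of $K(\Lambda)$ (every nontrivial face is spanned by a proper subset of the columns of $\Lambda$), the injectivity of $\Lambda$ (unique representation, equivalently a left inverse), and the no-zero-row hypothesis on $\Theta$. The paper merely phrases it in witness form rather than by contradiction --- it picks a column $\Lambda_k$ outside the face, a nonzero entry $\Theta_{k,i(k)}$ in row $k$, and exhibits $\Gamma \hat e_{i(k)}$ as an explicit element of $\mathrm{Im}\,\Gamma$ outside $\mathrm{span}\,F$ --- which is exactly the contrapositive of your observation that the containment would force the rows of $\Theta$ indexed by $I^c$ to vanish.
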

\begin{proof}
Since $\mathrm{rank}\,\Lambda = r$, a vector $z \in \mathrm{Im}\,\Lambda$ has a unique representation $z = \sum \alpha_i\Lambda_i$. Consider some nontrivial face $F$ of $K(\Lambda)$, and choose some $k$ such that $\Lambda_k \not \in \overline F$. Since $\Theta$ has no row of zeros, choose some $i(k)$ such that $\Theta_{k,i(k)} = \alpha \not = 0$. Define $y = \Gamma \hat e_{i(k)} = \Lambda\Theta \hat e_{i(k)} = \alpha \Lambda_k + \cdots$. Since this representation of $y$ is unique, clearly $y \not \in \mathrm{span}\,F$ and so $\mathrm{Im}\,\Gamma \not \subseteq \mathrm{span}\,F$. 
\end{proof}\\

We can deduce that:

\begin{corollary}
\label{corstrict}
Consider System (\ref{eqmain}) with assumptions A1--A3. Assume that at some $x \in \mathbb{R}^n_{\geq 0}$, the DSR graph $G_{\Gamma, Dv(x)}$ is strongly connected. Then the Jacobian matrix $\Gamma Dv(x)$ is strictly $K(\Lambda)$-quasipositive. 
\end{corollary}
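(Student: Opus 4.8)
The plan is to obtain the result purely by assembling the three immediately preceding lemmas: I would apply Lemma~\ref{lemDSR} with $A = \Gamma$, $B' = -\Gamma^T$, $B = Dv(x)$, and $K = K(\Lambda)$. That lemma requires two structural hypotheses — a span condition on $\mathrm{Im}\,\Gamma$ and $K$-quasipositivity of $\Gamma B$ for all admissible $B$ — together with strong connectedness of the DSR graph; I would establish the first two in our setting and then read off the conclusion from the hypothesis on the graph.

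First I would verify the span condition $\mathrm{Im}\,\Gamma \not\subseteq \mathrm{span}\,F$ for every nontrivial face $F$ of $K(\Lambda)$. This is exactly Lemma~\ref{lemnotinspan}, whose hypotheses are that $\Lambda$ has rank $r$ and that $\Theta$ has no row of zeros. Rank $r$ is guaranteed by A3(i). The no-zero-row property I would extract from A3(ii): since $\mathrm{ker}(\Theta^T)$ is one-dimensional and spanned by a strictly positive vector $y_\Theta \gg 0$, a zero row of $\Theta$ would force the corresponding standard basis vector into $\mathrm{ker}(\Theta^T)$, which is incompatible with that kernel being spanned by a vector all of whose entries are nonzero.

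Next I would supply the quasipositivity hypothesis of Lemma~\ref{lemDSR}, namely that $\Gamma B$ is $K(\Lambda)$-quasipositive for every $B \in \mathcal{Q}_0(-\Gamma^T)$. This is precisely Lemma~\ref{quasipos}, whose hypotheses on $\Lambda$ (at most one nonzero entry per row) and on $\Theta$ (at most one positive and one negative entry per column) are immediate consequences of A3(i) and A3(ii) respectively. In particular $Dv(x) \in \mathcal{Q}_0(-\Gamma^T)$ by A2, so the matrix $\Gamma Dv(x)$ of interest is itself among those covered.

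With both hypotheses in place and the DSR graph $G_{\Gamma, Dv(x)}$ strongly connected by assumption, Lemma~\ref{lemDSR} delivers strict $K(\Lambda)$-quasipositivity of $\Gamma Dv(x)$ directly. I expect no genuine obstacle here, since the content is entirely carried by the three cited lemmas; the only points requiring care are the two hypothesis checks above, together with the observation that strong connectedness of $G_{\Gamma, Dv(x)}$ depends only on the zero/nonzero pattern of $Dv(x)$ and is therefore insensitive to the sign flip distinguishing it from the graph $G(x) = G_{\Gamma, -Dv(x)}$ appearing in A4. This last remark is what lets the hypothesis stated in the corollary line up cleanly with both Lemma~\ref{lemDSR} and condition A4.
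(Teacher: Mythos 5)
Your proposal is correct and follows essentially the same route as the paper's proof: both reduce the claim to Lemma~\ref{lemDSR} applied with $A=\Gamma$, $B'=-\Gamma^T$, $K=K(\Lambda)$, verifying its span hypothesis via Lemma~\ref{lemnotinspan} (using that A3(ii) forces $\Theta$ to have no zero row, since a zero row would place a standard basis vector in the one-dimensional kernel $\mathrm{ker}(\Theta^T)$ spanned by a positive vector) and its quasipositivity hypothesis via Lemma~\ref{quasipos}. Your closing remark on sign-insensitivity of connectedness is a harmless extra observation not needed for the corollary as stated.
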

\begin{proof}
Condition A3(ii) implies that $\Theta$ has no row of zeros, for if $\Theta_{ij} = 0$ for some $i$ and all $j$, and $z$ is a positive vector in $\mathrm{ker}\,\Theta^T$ which exists by assumption, then $\{z, \hat e_i\}$ are two linearly independent vectors in $\mathrm{ker}\,\Theta^T$. So by Lemma~\ref{lemnotinspan}, $\mathrm{Im}\,\Gamma \not \subseteq \mathrm{span}\,F$ for any nontrivial face $F\subseteq K(\Lambda)$. Since, by Lemma~\ref{quasipos}, $\Gamma Dv(x)$ is $K$-quasipositive for all $Dv(x)\in \mathcal{Q}_0(-\Gamma^T)$, Lemma~\ref{lemDSR} now implies that $\Gamma Dv(x)$ is strictly $K(\Lambda)$-quasipositive whenever $G_{\Gamma, Dv(x)}$ is strongly connected. 
\end{proof}\\

It follows immediately that:
\begin{corollary}
\label{corstrict1}
Consider System (\ref{eqmain}) with assumptions A1--A4. The Jacobian matrix $\Gamma Dv(x)$ is strictly $K(\Lambda)$-quasipositive at each $x \in \mathrm{int}(\mathbb{R}^n_{\geq 0})$. 
\end{corollary}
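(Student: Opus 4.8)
The plan is to read this off directly from Corollary~\ref{corstrict}, whose hypotheses (A1--A3 together with strong connectedness of the relevant DSR graph at a point $x$) are all available at each interior point once A4 is assumed. First I would note that with A1--A4 in force, A1--A3 certainly hold, so Corollary~\ref{corstrict} applies at any point where the appropriate DSR graph is strongly connected. It then remains only to check that A4 supplies exactly that strong-connectedness hypothesis at every $x \in \mathrm{int}(\mathbb{R}^n_{\geq 0})$.

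The one point requiring a moment's care is a discrepancy in convention: A4 is phrased in terms of $G(x) = G_{\Gamma, -Dv(x)}$, whereas Corollary~\ref{corstrict} asks for strong connectedness of $G_{\Gamma, Dv(x)}$. However, by the definition of the reduced DSR graph, the arc set depends only on the zero/nonzero patterns of the two defining matrices: the arc $S_iR_j$ is present precisely when the $(j,i)$ entry of the second matrix is nonzero, and the entries of $-Dv(x)$ and $Dv(x)$ share the same zero pattern. Since strong connectedness is a property of the underlying (unsigned) digraph, the graphs $G_{\Gamma, -Dv(x)}$ and $G_{\Gamma, Dv(x)}$ are strongly connected together. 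Thus A4 is exactly the assertion that $G_{\Gamma, Dv(x)}$ is strongly connected at each interior point.

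Combining these observations, I would fix an arbitrary $x \in \mathrm{int}(\mathbb{R}^n_{\geq 0})$, invoke A4 to obtain strong connectedness of $G_{\Gamma, Dv(x)}$, and then apply Corollary~\ref{corstrict} to conclude that $\Gamma Dv(x)$ is strictly $K(\Lambda)$-quasipositive at that point. As $x$ was arbitrary, the conclusion holds throughout $\mathrm{int}(\mathbb{R}^n_{\geq 0})$. There is essentially no obstacle here: all the substantive content lies in Corollary~\ref{corstrict} and the lemmas it rests on, and the only thing to watch is the harmless sign discrepancy between the two DSR-graph conventions, which leaves connectivity unchanged.
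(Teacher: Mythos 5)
Your proposal is correct and follows exactly the paper's route: the paper states this corollary as an immediate consequence of Corollary~\ref{corstrict} together with assumption A4, which is precisely your argument. Your extra observation that $G_{\Gamma,-Dv(x)}$ and $G_{\Gamma,Dv(x)}$ have identical arc sets (the reduced DSR graph depends only on zero/nonzero patterns, so the sign convention is irrelevant to strong connectedness) is a valid and careful point that the paper leaves implicit.
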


\subsection{Monotonicity and strong monotonicity}

We have shown in Corollaries~\ref{corqp}~and~\ref{corstrict1} that conditions A1--A4 imply that $\Gamma Dv$ is $K(\Lambda)$-quasipositive on all of $\mathbb{R}^n_{\geq 0}$ and strictly $K(\Lambda)$-quasipositive on $\mathrm{int}(\mathbb{R}^n_{\geq 0})$. The implications in terms of monotonicity and strong monotonicity of the local semiflow restricted to $\Lambda$-classes are discussed briefly. \\

{\bf Notation.} Given $a, b \in \mathbb{R}^n$, define the closed segment $[a, b] = \{\lambda a + (1-\lambda)b\,:\,\lambda \in [0,1]\}$. Open and semi-open segments $(a, b)$, $(a, b]$ and $[a, b)$ are similarly defined.\\

The following is a consequence of results in \cite{hirschsmith} (see also \cite{banajimierczynski}):
\begin{lemma}
\label{hslem}
Consider a proper cone $K \subseteq \mathbb{R}^r$, some open set $U\subseteq \mathbb{R}^r$ , and a $C^1$ vector field $f \colon U \to\mathbb{R}^r$ with Jacobian matrix $Df$. Let $X \subseteq U$ be some convex domain, positively invariant under the local flow $\phi_U$ defined by $f$, and let $\phi$ be the induced local semiflow on $X$. Assume that $Df$ is $K$-quasipositive in $X$. Consider some $x, y \in X$ with $x \preceq y$. Then $\phi_t(x) \preceq \phi_t(y)$ for each $t > 0$ such that $\phi_t(x), \phi_t(y)$ exist. If $x \prec y$ and there exists $z \in [x,y]$ such that $Df(z)$ is strictly $K$-quasipositive, then $\phi_t(x) \llcurly \phi_t(y)$ for each $t > 0$ such that $\phi_t(x), \phi_t(y)$ exist.
\end{lemma}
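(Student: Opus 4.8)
The plan is to reduce both assertions to facts about the linear non-autonomous system satisfied by the difference of the two given trajectories, and then to apply, respectively, cone invariance (for monotonicity) and a cone Perron--Frobenius argument (for strong monotonicity). Fix $t>0$ for which $\phi_t(x)$ and $\phi_t(y)$ both exist, write $u(\tau)=\phi_\tau(x)$ and $w(\tau)=\phi_\tau(y)$ on $[0,t]$, and set $z(\tau)=w(\tau)-u(\tau)$. Because $X$ is convex, the segment $[u(\tau),w(\tau)]$ lies in $X$ for every $\tau$, so $Df$ is defined and $K$-quasipositive along it, and the integral form of the mean value theorem gives $\dot z=A(\tau)z$ with $A(\tau)=\int_0^1 Df\bigl(u(\tau)+sz(\tau)\bigr)\,ds$. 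This uses only the two trajectories in the hypothesis, so no question about existence of intermediate trajectories arises. Since $Df$ is continuous and the integrand arguments trace a compact set, there is a single $\alpha\in\mathbb{R}$ with $A(\tau)+\alpha I\colon K\to K$ for all $\tau\in[0,t]$ (the identity lies in the interior of the cone of $K$-positive matrices, so a uniform shift suffices on a compact set).

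For the first (monotonicity) claim I would show that the evolution operator $\Phi(\tau,\sigma)$ of $\dot z=Az$ maps $K$ into $K$. Setting $\zeta=e^{\alpha\tau}z$ converts the equation into $\dot\zeta=B(\tau)\zeta$ with $B(\tau)=A(\tau)+\alpha I\colon K\to K$, and $z(\tau)\in K$ exactly when $\zeta(\tau)\in K$. For $\dot\zeta=B(\tau)\zeta$ with each $B(\tau)$ sending $K$ to $K$, the evolution operator preserves $K$: this is transparent from the Peano--Baker series, each of whose terms is a positive integral of products of maps sending $K$ to $K$, hence an element of the closed cone $K$. Equivalently, one may apply Nagumo's theorem directly to $\dot z=Az$, the subtangentiality $A(\tau)w\in T_K(w)$ at any $w\in\partial K$ following from $A(\tau)w=(A(\tau)+\alpha I)w-\alpha w$, with $(A(\tau)+\alpha I)w\in K\subseteq T_K(w)$ and $-\alpha w\in\mathbb{R}w\subseteq T_K(w)$. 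Hence $z(0)=y-x\in K$ forces $z(\tau)\in K$ throughout, i.e.\ $\phi_t(x)\preceq\phi_t(y)$.

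For the second claim, with $x\prec y$ so that $z(0)=y-x\in K\setminus\{0\}$, I would first upgrade the shifted matrix at $\tau=0$ to an irreducible one. Writing $\hat z=x+s_0(y-x)$ for the point at which $Df$ is strictly $K$-quasipositive, the segment over which $A(0)=\int_0^1 Df\bigl(x+s(y-x)\bigr)\,ds$ is averaged is exactly $[x,y]$, which contains $\hat z$. I claim $A(0)$ is strictly $K$-quasipositive. Indeed, if a closed face $F$ is mapped into itself by $A(0)+\alpha I$, then for $w\in F$ the vector $\int_0^1\bigl(Df(x+s(y-x))+\alpha I\bigr)w\,ds$ lies in $F$ while each integrand value lies in $K$; since a closed face $H$ of a CCP cone satisfies $H=K\cap\mathrm{span}(H)$ and $K$ projects to a pointed cone onto $\mathrm{span}(H)^{\perp}$, such a $K$-valued integral lying in $F$ forces every integrand value into $F$. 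Taking $s=s_0$ gives $(Df(\hat z)+\alpha I)F\subseteq F$, whence $F$ is trivial by $K$-irreducibility of $Df(\hat z)+\alpha I$. Thus $B(0)=A(0)+\alpha I$ is $K$-irreducible.

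It remains to conclude $z(t)\in\mathrm{relint}(K)$. Two ingredients combine. First, each $\Phi(\tau,\sigma)$ is a fundamental solution, hence invertible; an invertible linear map sending $K$ into $K$ sends $\mathrm{relint}(K)$ into $\mathrm{relint}(K)$, since a relatively open subset of $\mathrm{aff}(K)$ contained in $K$ must lie in $\mathrm{relint}(K)$. Thus once $z$ enters $\mathrm{relint}(K)$ it stays there. Second, the $K$-irreducibility of $B(0)$ should make the short-time evolution strongly positive, so that $z(\tau)\in\mathrm{relint}(K)$ for small $\tau>0$; choosing such $\tau\in(0,t]$ and propagating by the first ingredient yields $z(t)=\Phi(t,\tau)z(\tau)\in\mathrm{relint}(K)$, i.e.\ $\phi_t(x)\llcurly\phi_t(y)$. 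The main obstacle is precisely this short-time strong positivity in the non-autonomous setting: irreducibility is guaranteed only at $\tau=0$ and need not persist, so the classical autonomous statement that $K$-irreducibility of $B$ makes $e^{\tau B}$ strongly positive (via $e^{\tau B}=e^{-\tau}\sum_k\frac{\tau^k}{k!}(I+B)^k$, whose term of order $\dim K-1$ already lands in $\mathrm{relint}(K)$ by irreducibility) does not apply verbatim, and a naive perturbation bound can be dominated by the shrinking interior depth. I would close this gap by comparing $\Phi(\tau,0)$ with $e^{\tau A(0)}$ through variation of constants and the modulus of continuity of $A$, or---since this boundary behaviour is exactly what the strong monotonicity theory of Hirsch--Smith is designed to handle---by routing the final step through that machinery, whose irreducibility hypothesis is met at the point $\hat z$ located above.
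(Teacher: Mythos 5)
You are attempting something the paper itself does not do: the paper offers no proof of this lemma, stating it only as ``a consequence of results in \cite{hirschsmith} (see also \cite{banajimierczynski})''. Judged as a self-contained argument, your first (monotonicity) claim is essentially sound: the reduction via the integral mean value theorem to $\dot z = A(\tau)z$ with $A(\tau)=\int_0^1 Df(u(\tau)+sz(\tau))\,ds$, followed by cone invariance via Peano--Baker or Nagumo, is the standard route. However, your stated justification of the uniform shift is false: the identity is \emph{not} in the interior of $\pi(K)=\{M : MK\subseteq K\}$. Interior elements of $\pi(K)$ map $K\setminus\{0\}$ into $\mathrm{int}(K)$, whereas $I$ maps boundary points to boundary points, so $I\in\partial\pi(K)$. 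For polyhedral cones (in particular the simplicial cones $K(\Lambda)$ arising in this paper) the uniform-shift conclusion is nevertheless true, because $(J+\alpha I)K\subseteq K$ reduces to finitely many inequalities over extreme rays and facet normals, making the minimal admissible $\alpha$ continuous in $J$ and hence bounded on compact sets of quasipositive matrices; but for a general proper cone, which is the setting of the lemma, it can fail (on the ice-cream cone one can build quasipositive matrices converging to $0$ whose minimal shifts diverge). Your Nagumo alternative sidesteps this for monotonicity, since only cross-positivity (subtangentiality) of $A(\tau)$ is needed and that is preserved under integration with no uniformity; note, though, that your face-averaging argument for strict $K$-quasipositivity of $A(0)$ genuinely requires a single $\alpha$ valid along all of $[x,y]$, so it inherits the same issue at the lemma's stated generality.

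The genuine gap is the one you concede: the second (strong monotonicity) assertion is not proved. Everything hinges on showing that the evolution operator of $\dot z=A(\tau)z$, with irreducibility available only at $\tau=0$, pushes $z(0)\in K\setminus\{0\}$ into $\mathrm{int}(K)$ in arbitrarily small time, and you leave this open, offering either an unspecified perturbation bound (which, as you yourself observe, is threatened by the degenerating ``depth'' of interior that must be reached) or an appeal to Hirsch--Smith --- but the latter is exactly the citation the paper makes, so the proposal does not constitute an independent proof of the second claim. For completeness, the gap can be closed along lines compatible with your setup: if $z(t)\notin\mathrm{int}(K)$ then, by your propagation observation, $z(\tau)\in\partial K$ for all $\tau\in[0,t]$; the faces $F(\tau)$ of $K$ generated by $z(\tau)$ are non-decreasing in $\tau$ (since the shifted evolution operator has the form $I+(\mbox{a map sending } K \mbox{ to } K)$), so by integrality of dimension they are constant, equal to some nontrivial face $F$, on an interval $(0,\epsilon)$; a cone-preserving map taking a relative interior point of $F$ into $F$ maps all of $F$ into $F$, hence $\Phi(\tau,\sigma)F\subseteq F$ for $0<\sigma\leq\tau<\epsilon$, which upon differentiating gives $A(\sigma)F\subseteq\mathrm{span}(F)$, and letting $\sigma\to 0$ yields $(A(0)+\alpha I)F\subseteq K\cap\mathrm{span}(F)=F$, contradicting the $K$-irreducibility you established for $A(0)+\alpha I$. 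Absent some such argument, your proof covers only the first assertion of the lemma.
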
\\

Our interest is in $\Lambda$-classes. Recalling that $\Lambda$ can be regarded as a bijection from $\mathbb{R}^r$ to $\mathrm{Im}(\Lambda)$, mapping $\mathbb{R}^r_{\geq 0}$ to $K(\Lambda)$, Lemma~\ref{hslem}, Corollaries~\ref{corqp}~and~\ref{corstrict1}, together imply:

\begin{corollary}
\label{hscor}
Consider System (\ref{eqmain}) with assumptions A1--A4 and $x, y \in \mathbb{R}^n_{\geq 0}$ with $x \preceq y$. Then $\phi_t(x) \preceq \phi_t(y)$ for each $t > 0$ such that $\phi_t(x), \phi_t(y)$ exist. If $x \prec y$ and at least one of $x, y$ is in $\mathrm{int}(\mathbb{R}^n_{\geq 0})$, then $\phi_t(x) \llcurly \phi_t(y)$ (namely, $\phi_t(y) - \phi_t(x) \in \mathrm{relint}\,K(\Lambda)$) for each $t > 0$ such that $\phi_t(x), \phi_t(y)$ exist.
\end{corollary}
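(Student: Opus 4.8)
The plan is to transport the restricted dynamics from the $\Lambda$-class containing $x$ and $y$ onto $\mathbb{R}^r$, where $K(\Lambda)$ becomes the \emph{proper} cone $\mathbb{R}^r_{\geq 0}$, and then invoke Lemma~\ref{hslem} directly. First I would observe that $x \preceq y$ forces $y - x \in K(\Lambda) \subseteq \mathrm{Im}\,\Lambda$, so $x$ and $y$ lie in the common $\Lambda$-class $\mathcal{C}_{\Lambda,x}$, which is convex and forward invariant under $\phi$. Fixing $c = x$ and choosing a left inverse $\Lambda'$ with $\Lambda'\Lambda = I$, I would use the affine bijection $\psi\colon z \mapsto c + \Lambda z$ from $\mathbb{R}^r$ onto the coset $c + \mathrm{Im}\,\Lambda$; it carries $\mathbb{R}^r_{\geq 0}$ onto $K(\Lambda)$, and $X := \psi^{-1}(\mathcal{C}_{\Lambda,x}) = \{z : c + \Lambda z \geq 0\}$ is a convex, positively invariant domain. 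Since $\Gamma v$ takes values in $\mathrm{Im}\,\Gamma \subseteq \mathrm{Im}\,\Lambda$, the vector field is tangent to the coset and pulls back to the $C^1$ field $f(z) = \Lambda'\Gamma v(c + \Lambda z)$ on the open set $\tilde U = \psi^{-1}(U)$, whose induced semiflow $\tilde\phi := \psi^{-1}\circ\phi\circ\psi$ is defined on $X$.

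Next I would transfer the quasipositivity statements. A short computation gives $Df(z) = \Lambda'\Gamma Dv(c+\Lambda z)\Lambda$, and for $w \in \mathbb{R}^r_{\geq 0}$ one has $\Lambda(Df(z) + \alpha I)w = (\Gamma Dv(c + \Lambda z) + \alpha I)(\Lambda w)$, using $\Gamma Dv\,\Lambda w \in \mathrm{Im}\,\Lambda$ and the fact that $\Lambda\Lambda'$ is the identity on $\mathrm{Im}\,\Lambda$. Hence if $\Gamma Dv + \alpha I$ maps $K(\Lambda)$ into itself, then $\Lambda w \in K(\Lambda)$ gives $\Lambda(Df(z)+\alpha I)w \in K(\Lambda)$, and applying $\Lambda'$ (which sends $K(\Lambda)$ into $\mathbb{R}^r_{\geq 0}$) shows $Df(z) + \alpha I$ maps $\mathbb{R}^r_{\geq 0}$ into itself. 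By Corollary~\ref{corqp}, $Df$ is therefore $\mathbb{R}^r_{\geq 0}$-quasipositive throughout $X$, and since $\psi$ identifies the faces of $K(\Lambda)$ with those of $\mathbb{R}^r_{\geq 0}$, the $K(\Lambda)$-irreducibility supplied by Corollary~\ref{corstrict1} transfers, making $Df(z)$ strictly $\mathbb{R}^r_{\geq 0}$-quasipositive at every $z$ with $c + \Lambda z \in \mathrm{int}(\mathbb{R}^n_{\geq 0})$.

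Finally I would apply Lemma~\ref{hslem} to $f$, $K = \mathbb{R}^r_{\geq 0}$, and $X$. From $x \preceq y$ we get $\psi^{-1}(x) \leq \psi^{-1}(y)$, so $\tilde\phi$ preserves order and pushing forward by $\psi$ yields $\phi_t(x) \preceq \phi_t(y)$ for all relevant $t$. For the strong conclusion, if $x \prec y$ with (say) $y \in \mathrm{int}(\mathbb{R}^n_{\geq 0})$, then the endpoint $z = \psi^{-1}(y)$ of the segment $[\psi^{-1}(x), \psi^{-1}(y)] \subseteq X$ satisfies the strict-quasipositivity hypothesis of Lemma~\ref{hslem}, giving $\tilde\phi_t(\psi^{-1}(x)) \llcurly \tilde\phi_t(\psi^{-1}(y))$ in $\mathrm{int}(\mathbb{R}^r_{\geq 0})$; under $\psi$ this reads $\phi_t(y) - \phi_t(x) \in \mathrm{relint}\,K(\Lambda)$, as required. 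The one point needing genuine care is the cone and face identification under $\psi$ — confirming that quasipositivity and, especially, irreducibility really do transfer between $\mathbb{R}^n$ with $K(\Lambda)$ and $\mathbb{R}^r$ with $\mathbb{R}^r_{\geq 0}$ — which I expect to occupy most of the argument; the order-preservation bookkeeping and the choice of strict-quasipositivity point on $[x,y]$ are then routine.
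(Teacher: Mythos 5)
Your proposal is correct and takes essentially the same route as the paper: the paper's own (very compressed) proof consists precisely of invoking the identification $z \mapsto c + \Lambda z$ of $\mathbb{R}^r_{\geq 0}$ with $K(\Lambda)$ on a coset of $\mathrm{Im}\,\Lambda$, and then combining Lemma~\ref{hslem} with Corollaries~\ref{corqp} and~\ref{corstrict1}. You have simply made explicit the bookkeeping (pullback of the vector field, transfer of quasipositivity and $K$-irreducibility across the bijection, and use of an interior endpoint of the segment) that the paper leaves to the reader.
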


\subsection{Structure of the equilibrium set}

Define $E \subseteq \mathbb{R}^n_{\geq 0}$ to be the equilibrium set of (\ref{eqmain}), i.e. 
\[
E = \{x \in \mathbb{R}^n_{\geq 0}\,:\, \Gamma v(x) = 0\}. 
\]

\begin{lemma}
\label{lemord0}
Consider System (\ref{eqmain}) with assumptions A1--A4 and two distinct equilibria $x, y$ with $x \sim^\Lambda y$ and at least one of $x, y \in \mathrm{int}(\mathbb{R}^n_{\geq 0})$. Then either $x \llcurly y$ or $y \llcurly x$. Consequently no stoichiometry class with an equilibrium in $\mathrm{int}(\mathbb{R}^n_{\geq 0})$ contains more than one equilibrium.
\end{lemma}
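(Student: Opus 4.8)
The plan is to argue by contradiction, using the lattice structure of the $\Lambda$-class together with the first integral $H$ and strong monotonicity. Since the conclusion ``$x \llcurly y$ or $y \llcurly x$'' is symmetric in $x,y$, I would assume without loss of generality that $x \in \mathrm{int}(\mathbb{R}^n_{\geq 0})$, and suppose for contradiction that $x$ and $y$ are incomparable in the order defined by $K(\Lambda)$. The first step is to form the meet $w := x \curlywedge y$, which lies in the same $\Lambda$-class as $x$ and $y$ by Corollary~\ref{lat2}, and to show that $w$ is itself an equilibrium. Because $w \preceq x$ and $w \preceq y$, monotonicity (Corollary~\ref{hscor}) gives $\phi_t(w) \preceq \phi_t(x) = x$ and $\phi_t(w) \preceq \phi_t(y) = y$ for all $t>0$; since $\phi_t(w) \sim^\Lambda w$ by invariance of $\Lambda$-classes, $\phi_t(w)$ is a common lower bound of $\{x,y\}$ within the $\Lambda$-class and hence $\phi_t(w) \preceq w$ by the infimum property in Lemma~\ref{latticefull}. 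Recalling that $H$ is a first integral (indeed $p_\Theta^T\Gamma = y_\Theta^T\Theta = 0$, as in the proof of Lemma~\ref{Scchar}), we have $H(\phi_t(w)) = H(w)$; combined with $\phi_t(w) \preceq w$ and Lemma~\ref{inc} this forces $\phi_t(w) = w$, so $w$ is an equilibrium.

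The contradiction then comes from examining $x - w$. Writing $x = c + \Lambda x_0$ and $y = c + \Lambda y_0$, Lemma~\ref{latticefull} gives $w = c + \Lambda(x_0 \wedge y_0)$, so $x - w = \Lambda p$ where $p := x_0 - (x_0 \wedge y_0) \in \mathbb{R}^r_{\geq 0}$ has $i$-th component $\max\{x_{0,i} - y_{0,i},\,0\}$. Incomparability of $x,y$ is equivalent to incomparability of $x_0,y_0$ in $\mathbb{R}^r_{\geq 0}$, since $\Lambda$ is a bijection onto $\mathrm{Im}(\Lambda)$ carrying $\mathbb{R}^r_{\geq 0}$ onto $K(\Lambda)$ (and $\mathrm{int}\,\mathbb{R}^r_{\geq 0}$ onto $\mathrm{relint}\,K(\Lambda)$); hence $p$ has at least one strictly positive and at least one zero component, so $p$ lies on a proper face of $\mathbb{R}^r_{\geq 0}$ and $x - w = \Lambda p$ lies on a proper face of $K(\Lambda)$, i.e. $x - w \in K(\Lambda) \setminus (\mathrm{relint}\,K(\Lambda) \cup \{0\})$. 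On the other hand $w \prec x$ with $x$ interior, so strong monotonicity (Corollary~\ref{hscor}) gives $\phi_t(w) \llcurly \phi_t(x)$ for $t>0$; as both are equilibria this reads $w \llcurly x$, i.e. $x - w \in \mathrm{relint}\,K(\Lambda)$, directly contradicting the previous sentence. Therefore $x$ and $y$ are comparable.

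To finish the first assertion, comparability together with $x \neq y$ and $x \in \mathrm{int}(\mathbb{R}^n_{\geq 0})$ yields, say, $x \prec y$ (the case $y \prec x$ being symmetric), and one further application of the strong monotonicity clause of Corollary~\ref{hscor} upgrades this to $x \llcurly y$. The concluding statement is then immediate: if two equilibria $x,y$ lay on a common stoichiometry class then $H(x) = H(y)$ by Lemma~\ref{Scchar}, whereas $x \llcurly y$ forces $H(x) < H(y)$ by Lemma~\ref{inc} (equivalently, stoichiometry classes are $K(\Lambda)$-antichains, Corollary~\ref{unord1}); so distinct equilibria cannot share a stoichiometry class once one of them is interior.

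I expect the crux to be the geometric observation in the second paragraph: the meet of two incomparable points sits on the relative boundary of the order interval, so $x - (x \curlywedge y)$ can never lie in $\mathrm{relint}\,K(\Lambda)$, whereas strong monotonicity insists that it must. This is precisely where condition~A4 does the essential work (via Corollary~\ref{corstrict1}), ruling out the incomparable configuration that plain monotonicity would permit. The other delicate point is establishing that the meet $w$ is a genuine equilibrium, for which the first integral $H$ is indispensable: monotonicity alone only shows the trajectory from $w$ is nonincreasing, and it is the conservation of $H$ that pins it down.
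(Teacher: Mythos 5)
Your proof is correct and takes essentially the same route as the paper's: split into the comparable and incomparable cases, show the meet $x \curlywedge y$ is itself an equilibrium via monotonicity together with the first-integral/antichain property, and then let strong monotonicity (Corollary~\ref{hscor}) produce the contradiction. The only difference is presentational: you spell out explicitly why $x - (x \curlywedge y)$ lies on a proper face of $K(\Lambda)$ and hence cannot lie in $\mathrm{relint}\,K(\Lambda)$, a detail the paper leaves implicit when it says that applying the argument of its part (i) to the meet ``gives a contradiction.''
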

\begin{proof}
Assume, by relabelling $x$ and $y$ if necessary, that $x \not \succ y$. 
\begin{enumerate}
\item[(i)] Suppose $x \prec y$, but $x \not \llcurly y$. Since at least one of $x$ or $y$ lies in $\mathrm{int}(\mathbb{R}^n_{\geq 0})$, the line segment $[x,y]$ certainly intersects $\mathrm{int}(\mathbb{R}^n_{\geq 0})$. Corollary~\ref{hscor} then implies that for $t > 0$, $x = \phi_t(x) \llcurly \phi_t(y) = y$, a contradiction. 

\item[(ii)] Now suppose that $x$ and $y$ are not order related. Then $z \equiv x \curlywedge y$ is different from $x$ and $y$. By monotonicity $\phi_t(z) \preceq \phi_t(x) = x$ and $\phi_t(z) \preceq \phi_t(y) = y$. Since $z \equiv x \curlywedge y$, $\phi_t(z) \preceq z$. But $\phi_t(z) \in \mathcal{C}_{\Gamma,z}$ by invariance of $\mathcal{C}_{\Gamma,z}$, and $\mathcal{C}_{\Gamma,z}$ is an antichain (Corollary~\ref{unord1}), so $z \in E$. If $y \in \mathrm{int}(\mathbb{R}^n_{\geq 0})$ (resp. $x \in \mathrm{int}(\mathbb{R}^n_{\geq 0})$), applying the argument in part (i) to $z$ and $y$ (resp. $z$ and $x$) gives a contradiction.
\end{enumerate}

Since stoichiometry classes are subsets of $\Lambda$-classes and are antichains, it follows immediately that no stoichiometry class with an equilibrium in $\mathrm{int}(\mathbb{R}^n_{\geq 0})$ contains more than one equilibrium.
\end{proof}\\

{\bf Remark.} Strong ordering of equilibria followed from a considerably more involved argument in \cite{banajiangeli}. Here the fact that $\Lambda$-classes are lattices makes the conclusion simple. \\

Given $x \sim^\Lambda y$, define 
\[
P(x, y) = ((x+K(\Lambda)) \cup (x - K(\Lambda))) \cap \mathcal{C}_{\Gamma,y}\,.
\]
Using the characterisation in Lemma~\ref{Scchar}, we can alternatively write
\[
P(x, y) = ((x+K(\Lambda)) \cup (x - K(\Lambda))) \cap \{w \sim^\Lambda x \,:\, H(w) = H(y)\}.
\]

\begin{lemma}
\label{foliate}
Assume that matrices $\Lambda$, $\Theta$ and $\Gamma$ satisfy Condition A3. Then $P(x, y)$ is nonempty for any $y \sim^\Lambda x$. 
\end{lemma}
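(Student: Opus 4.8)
The alternative characterisation of $P(x,y)$ reduces the claim to finding a single $w \in \mathbb{R}^n_{\geq 0}$ with $w \sim^\Lambda x$, $H(w) = H(y)$, and $w$ comparable to $x$ in the $K(\Lambda)$-order (i.e. $w \succeq x$ or $w \preceq x$); note that $x, y \in \mathbb{R}^n_{\geq 0}$ is implicit here, since $P(x,y) \subseteq \mathcal{C}_{\Gamma,y} \subseteq \mathbb{R}^n_{\geq 0}$. The plan is to bracket $x$ between two nonnegative points of its $\Lambda$-class and then slide along a segment until $H$ attains the value $H(y)$.

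First I would form the meet $m = x \curlywedge y$ and join $M = x \curlyvee y$, which exist because each $\Lambda$-class is a lattice (Corollary~\ref{lat2}, using the single-nonzero-per-row structure of $\Lambda$ from A3(i)). By construction $m \preceq x \preceq M$ and $m \preceq y \preceq M$, and all of $m, x, y, M$ share the $\Lambda$-class $\mathcal{C}_{\Lambda,x}$. The decisive point is that $m, M \in \mathbb{R}^n_{\geq 0}$: this is Statements~1 and~2 of Lemma~\ref{lat0} applied with $t = 0$ and $x, y \geq 0$. Since $H$ is strictly increasing in the $K(\Lambda)$-order (Lemma~\ref{inc}), the relations $m \preceq y \preceq M$ yield the bracket $H(m) \leq H(y) \leq H(M)$.

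Next I would run the intermediate value theorem along the two segments emanating from $x$. Both $[x, M]$ and $[m, x]$ lie in $\mathbb{R}^n_{\geq 0}$ (convexity of the orthant) and in the coset $x + \mathrm{Im}(\Lambda)$, so all their points are nonnegative and $\sim^\Lambda x$; moreover each point of $[x,M]$ is $\succeq x$ and each point of $[m,x]$ is $\preceq x$, because $M - x, x - m \in K(\Lambda)$. Along these segments $H$ varies affinely, sweeping $[H(x), H(M)]$ and $[H(m), H(x)]$ respectively, whose union contains $H(y)$. Choosing the segment whose $H$-range contains $H(y)$ produces a point $w$ with $H(w) = H(y)$ that is nonnegative, lies in $\mathcal{C}_{\Lambda,x}$, and is comparable to $x$; Lemma~\ref{Scchar} then upgrades $w \sim^\Lambda y$ with $H(w)=H(y)$ to $w \in \mathcal{C}_{\Gamma,y}$, so $w \in P(x,y)$.

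I expect the only genuine obstacle to be the nonnegativity constraint $w \geq 0$: adjusting $H$ by moving naively along a single column of $\Lambda$ need not keep the point inside the orthant, and it is precisely the nonnegativity of the lattice meet and join (Lemma~\ref{lat0}), together with convexity of $\mathbb{R}^n_{\geq 0}$, that removes this difficulty.
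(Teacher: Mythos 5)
Your proposal is correct and follows essentially the same route as the paper's proof: form the lattice meet/join of $x$ and $y$ in $\mathcal{C}_{\Lambda,x}$ (Corollary~\ref{lat2}), use monotonicity of $H$ (Lemma~\ref{inc}) to bracket $H(y)$, and apply the intermediate value theorem along a segment of comparable points, invoking Lemma~\ref{Scchar} to conclude membership in $\mathcal{C}_{\Gamma,y}$. The only cosmetic difference is that you construct both $x \curlywedge y$ and $x \curlyvee y$ at once, whereas the paper splits into the cases $H(y) > H(x)$ and $H(y) < H(x)$ and uses only the join (resp.\ meet) in each.
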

\begin{proof}
If $H(y) = H(x)$, then by Lemma~\ref{Scchar}, $x \in \mathcal{C}_{\Gamma,y}$ and we are done. Assume that $H(y) > H(x)$ (resp. $H(y) < H(x)$). By the lattice property of $\mathcal{C}_{\Lambda, x}$ (Corollary~\ref{lat2}), $z = x \curlyvee y \in \mathcal{C}_{\Lambda, x} \cap (x+K(\Lambda))$ (resp. $z = x \curlywedge y \in \mathcal{C}_{\Lambda, x} \cap (x-K(\Lambda))$), and by Lemma~\ref{inc} $H(z) \geq H(y) > H(x)$ (resp. $H(z) \leq H(y) < H(x)$). Since $(x+K(\Lambda)) \cap \mathcal{C}_{\Lambda, x}$ (resp. $(x-K(\Lambda)) \cap \mathcal{C}_{\Lambda, x}$) is convex, it includes $[x,z]$. By the intermediate value theorem there exists $w \in [x,z]$ such that $H(w) = H(y)$. By construction $w \in P(x, y)$. 
\end{proof}\\

\begin{lemma}
\label{lembounded}
Consider any CCP cone $K \subseteq \mathbb{R}^n_{\geq 0}$ and some vector $p \in \mathrm{int}( K^*)$. Given any point $x \in \mathbb{R}^n$ and any constant $t > 0$, the set $(x + K) \cap \{y\,:\,p^Ty = p^Tx + t\}$ is bounded. Similarly, $(x - K) \cap \{y\,:\,p^Ty = p^Tx - t\}$ is bounded.
\end{lemma}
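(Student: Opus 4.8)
The plan is to reduce both claims to the boundedness of a base of the cone. Writing $y = x + k$ with $k \in K$, the constraint $p^Ty = p^Tx + t$ is equivalent to $p^Tk = t$, so the first set is exactly $x + B_t$ where $B_t = \{k \in K \,:\, p^Tk = t\}$. Since a translate of $B_t$ is bounded precisely when $B_t$ is, it suffices to show $B_t$ is bounded; the second set is handled by the identical argument with $-K$ in place of $K$ (and the constraint $p^Tk = -t$ for $k \in -K$).

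The key property I would isolate first is that $p \in \mathrm{int}(K^*)$ forces $p^Tu > 0$ for every nonzero $u \in K$. Since $p \in K^*$ we already have $p^Tu \geq 0$. If $p^Tu = 0$ held for some $u \in K \setminus \{0\}$, then for every $\varepsilon > 0$ we would have $(p - \varepsilon u)^Tu = -\varepsilon|u|^2 < 0$, so $p - \varepsilon u \notin K^*$; letting $\varepsilon \to 0$ exhibits $p$ as a boundary point of $K^*$, contradicting $p \in \mathrm{int}(K^*)$. I would prove this directly, as above, rather than invoking general results on dual cones, in order to keep the argument self-contained.

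With this in hand I would establish boundedness by contradiction. Suppose $B_t$ is unbounded, so there is a sequence $(k_i) \subseteq K$ with $p^Tk_i = t$ and $|k_i| \to \infty$. The unit vectors $k_i/|k_i|$ lie on the unit sphere, so after passing to a subsequence they converge to some $u$ with $|u| = 1$; because $K$ is a closed cone, $u \in K$. But $p^T(k_i/|k_i|) = t/|k_i| \to 0$ forces $p^Tu = 0$ with $u \neq 0$, contradicting the strict positivity just established. Hence $B_t$ is bounded, and the $x - K$ statement follows in the same way. There is no genuine obstacle here — this is essentially the standard fact that a pointed CCP cone has a compact base — and the only point requiring care is the characterisation of $\mathrm{int}(K^*)$, which is precisely why I would prove it explicitly. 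I would also remark in passing that the hypothesis $K \subseteq \mathbb{R}^n_{\geq 0}$ plays no role in this lemma: closedness, the cone property, and $p \in \mathrm{int}(K^*)$ are all that the argument uses.
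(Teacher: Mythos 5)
Your proof is correct and takes essentially the same route as the paper: both arguments rest on the fact that $p \in \mathrm{int}(K^*)$ forces $p^Tu > 0$ for every $u \in K \setminus \{0\}$, combined with compactness of $\{u \in K : |u| = 1\}$; the paper packages this as the uniform constant $R = \inf_{u \in K,\, |u|=1} p^Tu > 0$ and the explicit bound $|y-x| \leq t/R$, whereas you run the same compactness through a contradiction with a normalized unbounded sequence. Your explicit verification that interiority of $p$ in the dual cone yields strict positivity is a worthwhile addition, since the paper justifies that step only by citing $p \in K^*$, which on its own gives merely $p^Tu \geq 0$.
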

\begin{proof}
The first statement will be proved; the proof of the second is similar. With fixed $x$ and $p$, define 
\[
R = \inf_{y\in K,|y| = 1}p^Ty.
\]
Since $p^T(y-x) > 0$ (as $p \in K^*$ and $y-x \in K\backslash\{0\}$), $R > 0$ as the infimum of a positive function on a compact set. Consider any sequence $y_n$ in $(x + K) \cap \{y\,:\,p^Ty = p^Tx + t\}$. We then have $t = p^T(y_n-x) \geq R|y_n-x|$, i.e., $|y_n-x| \leq t/R$, and so $(y_n)$ is bounded.
\end{proof}\\

\begin{lemma}
\label{compconv}
Assume that matrices $\Lambda$, $\Theta$ and $\Gamma$ satisfy Condition A3. $P(x, y)$ is a nonempty compact, convex set for any $y \sim^\Lambda x$.
\end{lemma}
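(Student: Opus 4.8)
We need to show that $P(x,y)$ is nonempty, compact, and convex for any $y \sim^\Lambda x$. Nonemptiness is already Lemma~\ref{foliate}, so the work is in convexity and compactness.

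The plan is to treat the two pieces of $P(x,y)$ separately. Recall
\[
P(x,y) = \big((x+K(\Lambda)) \cup (x-K(\Lambda))\big) \cap \mathcal{C}_{\Gamma,y}.
\]
Since $\mathcal{C}_{\Gamma,y}$ is a level set of the linear functional $H$ restricted to $\mathcal{C}_{\Lambda,x}$, and since crossing between $x+K(\Lambda)$ and $x-K(\Lambda)$ corresponds (via Lemma~\ref{inc}) to $H$ increasing versus decreasing, I expect that for a given $y$ only one of the two half-cones actually meets the level set (except in the degenerate case $H(y)=H(x)$, where $P(x,y)=\{x\}$). So the first step is to split into cases on the sign of $H(y)-H(x)$, and observe that $P(x,y)$ equals one of
\[
(x+K(\Lambda)) \cap \mathcal{C}_{\Gamma,y} \quad\text{or}\quad (x-K(\Lambda)) \cap \mathcal{C}_{\Gamma,y}.
\]
This is because, by Lemma~\ref{inc}, every $w \in (x+K(\Lambda))\setminus\{x\}$ satisfies $H(w)>H(x)$, so if $H(y)<H(x)$ no such $w$ lies in $\mathcal{C}_{\Gamma,y}$, and symmetrically.

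**Convexity.** Having reduced to a single half-cone, convexity is straightforward. Take the case $H(y)\geq H(x)$, so $P(x,y) = (x+K(\Lambda))\cap \mathcal{C}_{\Gamma,y}$. Now $x+K(\Lambda)$ is convex (translate of a convex cone), and by Lemma~\ref{Scchar} the stoichiometry class $\mathcal{C}_{\Gamma,y}$ equals $\{w \sim^\Lambda x : H(w)=H(y)\}$, which is the intersection of the affine subspace $x+\mathrm{Im}(\Lambda)$ with the hyperplane $\{w : H(w)=H(y)\}$ and with $\mathbb{R}^n_{\geq 0}$ — all convex sets. Hence $P(x,y)$ is an intersection of convex sets and so convex. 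The case $H(y)<H(x)$ is identical with $x-K(\Lambda)$ in place of $x+K(\Lambda)$.

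**Compactness.** This is where Lemma~\ref{lembounded} does the work, and it is the step I expect to require the most care. I would note that $K(\Lambda) \subseteq \mathbb{R}^n_{\geq 0}$: this follows from condition A3(i), since each column of $\Lambda$ has a single nonzero entry, and A5 is \emph{not} needed here because a column of $\Lambda$ lying in $\mathbb{R}^n_{\leq 0}$ would still have $K(\Lambda)$ contained in the orthant only if all nonzero entries are positive — so I must instead use the positive functional $p_\Theta \in \mathrm{int}(K(\Lambda)^*)$ constructed in the subsection on the increasing first integral. Taking the case $H(y)\geq H(x)$, set $t = H(y)-H(x) = p_\Theta^T(y-x)\geq 0$. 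If $t=0$ then $P(x,y)=\{x\}$ is trivially compact; if $t>0$, then $P(x,y) \subseteq (x+K(\Lambda)) \cap \{w : p_\Theta^T w = p_\Theta^T x + t\}$, which is bounded by Lemma~\ref{lembounded} with $K=K(\Lambda)$ and $p=p_\Theta$. Boundedness together with closedness (each of $x+K(\Lambda)$, the hyperplane, and $\mathbb{R}^n_{\geq 0}$ is closed) gives compactness. The symmetric argument using $(x-K(\Lambda))$ and the second half of Lemma~\ref{lembounded} handles $H(y)<H(x)$. The main obstacle is simply keeping the case analysis clean and making sure the right functional $p_\Theta$ is invoked so that Lemma~\ref{lembounded} applies without needing $K(\Lambda)\subseteq\mathbb{R}^n_{\geq 0}$.
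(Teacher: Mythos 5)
Your proof is correct and follows essentially the same route as the paper's: reduce $P(x,y)$ to a single half-cone intersection (the paper does this via the antichain property of stoichiometry classes, Corollary~\ref{unord1}; you do it via Lemma~\ref{inc}, which is the same underlying fact), then get convexity and closedness as an intersection of closed convex sets, and boundedness from Lemma~\ref{lembounded} with $K=K(\Lambda)$ and $p=p_\Theta$. Your remark that $K(\Lambda)\subseteq\mathbb{R}^n_{\geq 0}$ may fail under A3(i), so that only $p_\Theta\in\mathrm{int}(K(\Lambda)^*)$ is really used in Lemma~\ref{lembounded}, is a careful reading of a point the paper passes over silently when it makes the identical application.
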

\begin{proof}
It has been shown in Lemma~\ref{foliate} that $P(x, y)$ is nonempty. Either $x=y$, in which case $P(x, y) = \{x\}$ which is trivially compact and convex; or exactly one of $(x+K(\Lambda))\cap \mathcal{C}_{\Gamma,y}$ or $(x - K(\Lambda)) \cap \mathcal{C}_{\Gamma,y}$ is nonempty (since stoichiometry classes are antichains by Corollary~\ref{unord1}). For definiteness assume that $P(x, y) = (x+K(\Lambda))\cap \mathcal{C}_{\Gamma,y}$ is nonempty (the other case is similar). $P(x, y)$ is thus closed and convex as the intersection of closed, convex sets. Applying Lemma~\ref{lembounded} with $K=K(\Lambda)$ and $p=p_\Theta$, $P(x, y)$ is bounded. 
\end{proof}\\

\begin{lemma}
\label{haseq}
Consider System (\ref{eqmain}) satisfying conditions A1--A3. Choose some $e \in E$. Then each stoichiometry class in $\mathcal{C}_{\Lambda,e}$ contains an equilibrium. 
\end{lemma}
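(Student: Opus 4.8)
The plan is to fix a stoichiometry class in $\mathcal{C}_{\Lambda, e}$, realise it as $\mathcal{C}_{\Gamma, y}$ for some representative $y \sim^\Lambda e$, and then locate an equilibrium inside the compact convex set $P(e,y)$ supplied by Lemma~\ref{compconv}. First I would dispose of the trivial case: if $H(y) = H(e)$ then by Lemma~\ref{Scchar} we have $e \in \mathcal{C}_{\Gamma, y}$, so the class already contains the equilibrium $e$. Otherwise, since stoichiometry classes are antichains (Corollary~\ref{unord1}) and $H$ is strictly increasing along the order (Lemma~\ref{inc}), exactly one of $(e+K(\Lambda)) \cap \mathcal{C}_{\Gamma, y}$ and $(e - K(\Lambda)) \cap \mathcal{C}_{\Gamma, y}$ is nonempty; say $H(y) > H(e)$, so that $P(e,y) = (e + K(\Lambda)) \cap \mathcal{C}_{\Gamma, y}$ and every $w \in P(e,y)$ satisfies $w \succeq e$. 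The case $H(y) < H(e)$ is symmetric, with $P(e,y) = (e - K(\Lambda)) \cap \mathcal{C}_{\Gamma, y}$ and $w \preceq e$.

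The second step is to show that $P(e,y)$ is forward invariant under $\phi$. Under A1--A3 the Jacobian $\Gamma Dv$ is $K(\Lambda)$-quasipositive by Corollary~\ref{corqp}, so, transporting to $\mathcal{C}_{\Lambda,e}$ via the bijection $\Lambda$, Lemma~\ref{hslem} shows the semiflow restricted to the $\Lambda$-class is monotone with respect to $K(\Lambda)$. (Note that only plain monotonicity is needed here, so A4 is not required.) Hence for $w \in P(e,y)$, from $e \preceq w$ and the fact that $e$ is an equilibrium we get $e = \phi_t(e) \preceq \phi_t(w)$, i.e.\ $\phi_t(w) \in e + K(\Lambda)$; and since stoichiometry classes are invariant, $\phi_t(w) \in \mathcal{C}_{\Gamma, y}$, so $\phi_t(w) \in P(e,y)$ whenever it exists. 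Because $P(e,y)$ is bounded and the orthant is positively invariant (A2), trajectories from $P(e,y)$ neither blow up nor leave the set, and so $\phi_t$ is defined on $P(e,y)$ for all $t \ge 0$.

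Finally I would invoke a fixed-point argument for a continuous semiflow on a compact convex set. Since $v$ is $C^1$, $\phi$ is jointly continuous, and for each integer $n\ge 1$ the time-$1/n$ map $\phi_{1/n}$ carries the nonempty compact convex set $P(e,y)$ into itself; Brouwer's theorem then yields $x_n \in P(e,y)$ with $\phi_{1/n}(x_n) = x_n$, whence $\phi_{k/n}(x_n) = x_n$ for every integer $k$. Passing to a convergent subsequence $x_n \to x^* \in P(e,y)$ and, for fixed $s>0$, taking $k_n = \lfloor ns\rfloor$ so that $k_n/n \to s$, joint continuity gives $\phi_s(x^*) = \lim_n \phi_{k_n/n}(x_n) = \lim_n x_n = x^*$. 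Thus $x^*$ is an equilibrium lying in $P(e,y) \subseteq \mathcal{C}_{\Gamma, y}$, and since $y$ was an arbitrary representative of a stoichiometry class in $\mathcal{C}_{\Lambda,e}$, every such class contains an equilibrium.

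The main obstacle is this topological existence step. Monotonicity together with Lemma~\ref{compconv} delivers the compact convex forward-invariant set $P(e,y)$ almost for free, but converting forward invariance into an actual equilibrium requires the Brouwer-plus-limit argument above: the limit of points, each fixed by a time-$1/n$ map, must be an equilibrium. The supporting technical points on which this rests—global forward existence of trajectories on the bounded set $P(e,y)$ and joint continuity of $\phi$—are routine but genuinely needed for the limiting passage to go through.
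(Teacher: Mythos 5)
Your proposal is correct and follows essentially the same route as the paper's proof: both realise the target class as the nonempty, compact, convex set $P(e,y)$ via Lemma~\ref{compconv}, show it is forward invariant as the intersection of the monotonicity-invariant set $e \pm K(\Lambda)$ with the invariant stoichiometry class, and then extract an equilibrium by the Brouwer fixed point theorem. The only differences are matters of detail: the paper cites Brouwer without spelling out the time-$1/n$ fixed-point-and-limit argument and the global forward existence of trajectories, which you correctly identify as the routine but necessary supporting steps, and you observe (correctly) that only plain monotonicity under A1--A3 is needed, whereas the paper cites Corollary~\ref{hscor}, stated under A1--A4.
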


\begin{proof}
By Lemma~\ref{compconv}, for arbitrary $x \sim^\Lambda e$, the intersections $P(e, x)$ are nonempty, compact, convex sets. Moreover these sets are forward invariant under $\phi$: $((e+K(\Lambda)) \cup (e-K(\Lambda)))$ is forward invariant by monotonicity of $\phi$ (Corollary~\ref{hscor}) and since the stoichiometry class $\mathcal{C}_{\Gamma,x}$ is forward invariant, $P(e, x)$ is forward invariant as the intersection of forward invariant sets. By the Brouwer fixed point theorem (\cite{spanier} for example) $P(e, x)$ contains an equilibrium. Since $x$ was arbitrary, each stoichiometry class in $\mathcal{C}_{\Lambda,e}$ contains an equilibrium. 
\end{proof}\\

\begin{lemma}
\label{localhomeo}
Consider System (\ref{eqmain}) satisfying conditions A1--A4 and some $e \in E \cap \mathrm{int}(\mathbb{R}^n_{\geq 0})$. There exists an equilibrium $e_0 \in (e - \mathrm{relint}(K(\Lambda))) \cap \mathrm{int}(\mathbb{R}^n_{\geq 0})$. Given any such equilibrium $e_0$ there exists a homeomorphism 
\[
\psi:[H(e_0), H(e)] \to \psi([H(e_0), H(e)]) \subseteq E
\]
such that (i) $H(\psi(h)) = h$, (ii) $\psi([H(e_0), H(e)]) \subseteq \mathrm{int}(\mathbb{R}^n_{\geq 0})$ and (iii) $h_1 < h_2 \Rightarrow \psi(h_1) \llcurly \psi(h_2)$.
\end{lemma}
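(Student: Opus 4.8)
The plan is to establish the two assertions in turn: first the existence of the lower equilibrium $e_0$, and then the monotone arc $\psi$ joining it to $e$. Throughout I rely on the fact that $H$ is a strictly increasing first integral (Lemma~\ref{inc}), that $\Lambda$-classes carry equilibria on every stoichiometry class (Lemma~\ref{haseq}), and that distinct equilibria on a common $\Lambda$-class with an interior member are strongly ordered (Lemma~\ref{lemord0}).

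For the existence of $e_0$, fix any $z \gg 0$ and, for small $\varepsilon > 0$, set $x_\varepsilon = e - \varepsilon\Lambda z$. Since $e \in \mathrm{int}(\mathbb{R}^n_{\geq 0})$, $x_\varepsilon \in \mathrm{int}(\mathbb{R}^n_{\geq 0})$ for $\varepsilon$ small, while $x_\varepsilon \sim^\Lambda e$ and $H(x_\varepsilon) = H(e) - \varepsilon\, y_\Theta^T z < H(e)$. By Lemma~\ref{haseq} the stoichiometry class $\mathcal{C}_{\Gamma, x_\varepsilon} \subseteq \mathcal{C}_{\Lambda, e}$ carries an equilibrium $e_0$; as $H(e_0) = H(x_\varepsilon) < H(e)$ we have $e_0 \neq e$, so Lemma~\ref{lemord0} (using that $e$ is interior) together with Lemma~\ref{inc} forces $e_0 \llcurly e$, placing $e_0$ in $e - \mathrm{relint}(K(\Lambda))$. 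To upgrade this to $e_0 \in \mathrm{int}(\mathbb{R}^n_{\geq 0})$, observe that $e_0 \in (e - K(\Lambda)) \cap \{\,H = H(e) - t\,\}$ with $t = \varepsilon\, y_\Theta^T z$; Lemma~\ref{lembounded} with $p = p_\Theta$ then gives $|e_0 - e| \le t/R$, where $R = \inf_{y \in K(\Lambda),\,|y|=1} p_\Theta^T y > 0$, so choosing $\varepsilon$ with $t/R < \min_i e_i$ yields $e_0 \gg 0$.

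For the homeomorphism, I would define $\psi$ class by class. For each $h \in (H(e_0), H(e))$, Lemma~\ref{haseq} provides an equilibrium $f$ on $\mathcal{C}_{\Lambda, e}^h$; applying Lemma~\ref{lemord0} to the pairs $(e_0, f)$ and $(f, e)$, each having an interior member, and fixing the orientation via the $H$-values and Lemma~\ref{inc}, gives $e_0 \llcurly f \llcurly e$. The crucial step is to deduce $f \in \mathrm{int}(\mathbb{R}^n_{\geq 0})$ from this two-sided strong ordering. Writing $f - e_0 = \Lambda a$ and $e - f = \Lambda b$ with $a, b \gg 0$ and letting $\lambda_i$ denote the single nonzero entry of row $i$ of $\Lambda$ (in column $j(i)$, by condition A3(i)), we get $(f - e_0)_i = \lambda_i a_{j(i)}$ and $(e - f)_i = \lambda_i b_{j(i)}$: if $\lambda_i > 0$ the former gives $f_i > (e_0)_i \ge 0$, and if $\lambda_i < 0$ the latter gives $f_i > e_i \ge 0$, so $f_i > 0$ for every $i$ and $f \gg 0$. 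Interiority makes $f$ the unique equilibrium on its class (Lemma~\ref{lemord0}), so setting $\psi(h) = f$ for interior $h$ and $\psi(H(e_0)) = e_0$, $\psi(H(e)) = e$ defines $\psi$ unambiguously and yields (i) and (ii).

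It then remains to check (iii) and that $\psi$ is a homeomorphism. Property (iii) is immediate: for $h_1 < h_2$ the images are distinct interior equilibria on the common $\Lambda$-class, so Lemma~\ref{lemord0} and Lemma~\ref{inc} give $\psi(h_1) \llcurly \psi(h_2)$. Since $H \circ \psi = \mathrm{id}$, $\psi$ is injective with continuous inverse $H$, so only continuity of $\psi$ is at issue. The image lies in the order interval $(e_0 + K(\Lambda)) \cap (e - K(\Lambda))$, which is closed and, by the bound $|y - e| \le (H(e) - H(e_0))/R$, compact; hence for $h_n \to h_\ast$ any subsequential limit $f$ of $\psi(h_n)$ is an equilibrium on $\mathcal{C}_{\Lambda, e}^{h_\ast}$ inside this interval, and since that class carries a unique equilibrium (namely $\psi(h_\ast)$), $f = \psi(h_\ast)$; thus the whole sequence converges and $\psi$ is continuous, giving a homeomorphism onto its image. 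I expect the interiority deduction from two-sided strong ordering to be the main obstacle, since it is precisely here that the single-nonzero-entry-per-row structure of $\Lambda$ in A3(i) is essential: without it an intermediate equilibrium could lie on $\partial\mathbb{R}^n_{\geq 0}$.
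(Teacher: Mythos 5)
Your proof is correct, and it reaches the statement by a route that differs from the paper's at the decisive step. Both arguments obtain $e_0$ in essentially the same way (an equilibrium on a nearby level set of $H$, forced into $\mathrm{int}(\mathbb{R}^n_{\geq 0})$ by the estimate $|y-e|\le (H(e)-H(y))/R$ from the proof of Lemma~\ref{lembounded}), and your compactness-plus-uniqueness argument for continuity of $\psi$ is essentially identical to the paper's. The difference is how interiority of the \emph{intermediate} equilibria is secured. The paper works inside the sets $P(e,x)=(e-K(\Lambda))\cap\mathcal{C}_{\Gamma,x}$: it fixes $\epsilon<\delta R$, where $\delta$ is the distance from $e$ to $\partial\mathbb{R}^n_{\geq 0}$, observes that for $H(e)-H(x)\le\epsilon$ every point of $P(e,x)$ lies within $\delta$ of $e$ and is hence interior, extracts an equilibrium $e_{H(x)}$ from each such nonempty compact convex forward-invariant set (Lemma~\ref{compconv} and Brouwer), and then \emph{defines} $e_0=e_{H(e)-\epsilon}$; interiority of all the $e_h$ thus comes from a single metric estimate tying them close to $e$. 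You instead treat $e_0$ as given, produce an equilibrium $f$ on each intermediate class via Lemma~\ref{haseq}, pin down the two-sided ordering $e_0\llcurly f\llcurly e$ via Lemmas~\ref{lemord0}~and~\ref{inc}, and deduce $f\gg 0$ from this sandwich using the one-nonzero-entry-per-row structure of $\Lambda$ in A3(i). This buys something real: your argument proves the lemma as literally stated, for an \emph{arbitrary} equilibrium $e_0\in(e-\mathrm{relint}(K(\Lambda)))\cap\mathrm{int}(\mathbb{R}^n_{\geq 0})$, whereas the paper's proof as written only constructs one specific $e_0$ close to $e$ and verifies the conclusion for it (which is all that the proof of Theorem~\ref{mainthm0} actually uses). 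Conversely, the paper's metric mechanism uses nothing about $\Lambda$ beyond Lemma~\ref{lembounded}, so it would survive generalisation to cones lacking the special row structure (a direction raised in the paper's conclusion), while your interiority step is genuinely tied to A3(i), as you yourself note at the end.
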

\begin{proof}
Certainly, $e$ is a not a minimal element of $\mathcal{C}_{\Lambda, e}$ since it lies in $\mathrm{int}(\mathbb{R}^n_{\geq 0})$, and moreover $e$ lies in $\mathrm{relint}(\mathcal{C}_{\Gamma,e})$. Define $R$ as in the proof of Lemma~\ref{lembounded}, fixing $K = K(\Lambda)$ and $p = p_\Theta$, namely: 
\[
R = \inf_{y\in K,|y| = 1}p_\Theta^Ty.
\]
Let $0 < \delta$ be the minimum distance from $e$ to $\partial \mathbb{R}^n_{\geq 0}$ and choose some positive $\epsilon < \delta R$. Then for any $x \preceq e$ such that $H(e)-H(x) \leq \epsilon$, $P(e, x) \subseteq \mathrm{int}(\mathbb{R}^n_{\geq 0})$: this follows since $|y-e| \leq (H(e)-H(x))/R < \delta$ for $y \in P(e, x)$ (see the proof of Lemma~\ref{lembounded}). By Lemma~\ref{compconv}, $P(e, x)$ is a nonempty, compact convex set, and consequently contains an equilibrium $e_{H(x)}$. Since $e_{H(x)} \in (e - \mathrm{relint}(K(\Lambda))) \cap \mathrm{int}(\mathbb{R}^n_{\geq 0})$, it is, by Lemma~\ref{lemord0}, the unique equilibrium in $\mathcal{C}_{\Gamma,x}$. Defining $e_0 = e_{H(e) - \epsilon}$, for $h \in [H(e_0), H(e)]$, the map $\psi: h \mapsto e_h$ is thus well defined, has image in $\mathrm{int}(\mathbb{R}^n_{\geq 0})$ and is clearly a bijection onto its image. $\psi^{-1}$ is continuous as it is simply the restriction of $H$ to the image of $\psi$. 

We next show that $\psi$ is continuous (see also the proof of Lemma~5.12 in \cite{banajiangeli}). Consider any $h \in [H(e_0), H(e)]$, a sequence of values $h_i \subseteq [H(e_0), H(e)]$ with $h_i \to h$, and the corresponding equilibria $e_i = \psi(h_i)$. Since all $e_i$ lie in the order interval $[[e_0, e]] = \{y \in \mathbb{R}^n\,:\, e_0 \preceq y \preceq e\} \subseteq \mathrm{int}(\mathbb{R}^n_{\geq 0})$ which is easily seen to be bounded, $e_i$ contains no unbounded subsequences. Consider any convergent subsequence of $(e_i)$, say $e_{i_k} \to \tilde{e}$. By closure of $E$ and $[[e_0, e]]$, $\tilde{e} \in E \cap [[e_0, e]]$, and by continuity of $H$, $H(\tilde{e}) = h$. Since $\psi(h)$ is the unique equilibrium satisfying these requirements, $\tilde{e} = \psi(h)$. Thus $\psi$ is continuous. 

Finally, that $H(\psi(h)) = h$ is immediate from the definition, and that $h_1 < h_2 \Rightarrow \psi(h_1) \llcurly \psi(h_2)$ is immediate from Lemma~\ref{lemord0} since $\mathrm{Im}(\psi)\subseteq \mathrm{int}(\mathbb{R}^n_{\geq 0})$. 
\end{proof}\\

{\bf Remark.} We could equally prove the existence of $e_0 \in (e + \mathrm{relint}(K(\Lambda))) \cap \mathrm{int}(\mathbb{R}^n_{\geq 0})$ and a homeomorphism $\psi:[H(e), H(e_0)] \to \psi([H(e), H(e_0)]) \subseteq E$.

\subsection{Local asymptotic stability of equilibria}

We are now in a position to prove the local stability of all positive equilibria for System (\ref{eqmain}) satisfying conditions A1--A4. 

{\par{\it Proof of Theorem \ref{mainthm0}}. \ignorespaces}
Consider any positive equilibrium $e$. That $e$ is the unique equilibrium on $\mathcal{C}_{\Gamma,e}$ follows from Lemma~\ref{lemord0}. Choose and fix some equilibrium $e_0 \in (e - \mathrm{relint}(K(\Lambda))) \cap \mathrm{int}(\mathbb{R}^n_{\geq 0})$ as in the proof of Lemma~\ref{localhomeo}. Via Lemma~\ref{localhomeo} define a strictly increasing homeomorphism $\psi:[H(e_0),H(e)]\to \psi([H(e_0),H(e)]) \equiv E_{e_0,e} \subseteq E$ such that $\psi(H(e_0))=e_0$ and $\psi(H(e))=e$. Note that $E_{e_0,e} \subseteq \mathrm{int}(\mathbb{R}^n_{\geq 0})$ and is strictly ordered. Define $U = (e_0 + K(\Lambda)) \cap \mathcal{C}_{\Gamma,e}$. For any point $x \in U\backslash\{e\}$, $e_0 \in x - K(\Lambda)$. On the other hand $e \not \in x - K(\Lambda)$. Since $E_{e_0,e}$ is homeomorphic to a line segment (and hence connected), $E_{e_0,e}$ must intersect $x - \mathrm{relbd}\, K(\Lambda)$. Moreover this intersection is unique, otherwise some pair of distinct equilibria in $E_{e_0,e}$ must fail to be strictly ordered. It is also clear that $(e-\mathrm{relbd}\, K(\Lambda)) \cap E_{e_0,e} = \{e\}$. Thus for all $x \in U$, define $Q(x) \equiv (x-\mathrm{relbd}\, K(\Lambda)) \cap E_{e_0,e}$, and $L(x) \equiv H(Q(x))$. We make three claims (the reader may wish to compare Lemmas~5.16,~5.17~and~5.18 in \cite{banajiangeli}): 
\begin{enumerate}
\item $L(e) > L(x)$ for $x \in U\backslash\{e\}$;
\item $Q$, and hence $L$, are continuous on $U$;
\item $L$ increases strictly along nontrivial orbits.
\end{enumerate}
The first statement is immediate since $H(e) > H(z)$ for $z \in E_{e_0,e}\backslash\{e\}$. Since both $K(\Lambda)$ and $E_{e_0,e}$ are closed sets, with intersection at a unique point, it is not hard to see that for $(x_i) \subseteq U$ with $x_i \to x$, $Q(x_i) \to Q(x)$ and the second claim follows. Finally, given $x \in U\backslash\{e\}$, by strong monotonicity (Corollary~\ref{hscor}), $\phi_t(x) \ggcurly \phi_t(Q(x)) = Q(x)$ for $t > 0$; if $z \preceq Q(x)$ then $z \llcurly \phi_{t}(x)$, i.e., $z \ne Q(\phi_{t}(x))$. So $Q(\phi_{t}(x)) \succ Q(x)$, and thus $L(\phi_{t}(x)) > L(x)$, proving the third claim. 

Thus $L$ serves as a Lyapunov function for $\left. \phi\right|_U$, and, by standard arguments, $e$ is locally asymptotically stable relative to $\mathcal{C}_{\Gamma,e}$. In particular, each $x \in \mathrm{relint}\,U$ is attracted to $e$. 
\endproof

\subsection{Global asymptotic stability of equilibria}

For a global result we need to show that the previous local constructions can be extended.

\begin{lemma}
\label{minimal1}
Consider System (\ref{eqmain}) with conditions A1--A4 defining a local semiflow $\phi$. Consider some $c \in \mathrm{int}(\mathbb{R}^n_{\geq 0})$. Suppose $\mathcal{C}_{\Lambda, c}$ contains an infimum $z \preceq \mathcal{C}_{\Lambda, c}$, and that given any $y \in \mathcal{C}_{\Lambda, c} \cap \mathrm{int}(\mathbb{R}^n_{\geq 0})$, $\phi$ has no $\omega$-limit points on $\mathcal{C}_{\Gamma,y} \cap \partial\,\mathbb{R}^n_{\geq 0}$. Then, for each $h \in [H(z), H(c)]$, the stoichiometry class $\mathcal{C}_{\Lambda, c}^h$ contains exactly one equilibrium. For $h \in (H(z), H(c)]$ this equilibrium is in $\mathrm{int}(\mathbb{R}^n_{\geq 0})$.
\end{lemma}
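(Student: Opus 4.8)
The plan is to first identify the stoichiometry class at the bottom of the $\Lambda$-class, use the resulting bottom equilibrium to seed existence on every class via Lemma~\ref{haseq}, and then rule out boundary equilibria using the hypothesis on $\omega$-limit sets together with an explicit construction of interior points. I would begin with the level $h = H(z)$. Since $z \preceq \mathcal{C}_{\Lambda,c}$, any $y \in \mathcal{C}_{\Lambda,c}$ with $H(y) = H(z)$ satisfies $y \succeq z$; if $y \neq z$ then $y \succ z$, whence $H(y) > H(z)$ by Lemma~\ref{inc}, a contradiction. Thus $\mathcal{C}_{\Lambda,c}^{H(z)} = \{z\}$, and by Lemma~\ref{Scchar} this set is exactly the stoichiometry class $\mathcal{C}_{\Gamma,z}$. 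As $\mathcal{C}_{\Gamma,z}$ is forward invariant and a singleton, $\phi_t(z) = z$ for all $t$, so $z \in E$; this disposes of the case $h = H(z)$.

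Next, seeding Lemma~\ref{haseq} with $e = z \in E$ and using $\mathcal{C}_{\Lambda,z} = \mathcal{C}_{\Lambda,c}$ yields an equilibrium in every stoichiometry class $\mathcal{C}_{\Lambda,c}^h$ with $h \in [H(z), H(c)]$, so existence is immediate. For interiority, fix $h \in (H(z), H(c)]$, set $t = (h - H(z))/(H(c) - H(z)) \in (0,1]$, and consider $y^* = (1-t)z + tc$. This point lies in $\mathcal{C}_{\Lambda,c}^h$ (it is $\sim^\Lambda c$ and $H(y^*) = h$ by linearity of $H$), and it is interior because $y^*_i \geq t\,c_i > 0$ for every $i$, using $z \geq 0$ and $c \gg 0$. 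Hence $\mathcal{C}_{\Lambda,c}^h$ meets $\mathrm{int}(\mathbb{R}^n_{\geq 0})$, and the hypothesis applied to $y = y^*$ guarantees that $\mathcal{C}_{\Gamma,y^*} \cap \partial\,\mathbb{R}^n_{\geq 0} = \mathcal{C}_{\Lambda,c}^h \cap \partial\,\mathbb{R}^n_{\geq 0}$ contains no $\omega$-limit points of $\phi$. Since any equilibrium is an $\omega$-limit point of its own constant trajectory, no equilibrium on $\mathcal{C}_{\Lambda,c}^h$ can lie on $\partial\,\mathbb{R}^n_{\geq 0}$; every equilibrium on this class is therefore interior.

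Uniqueness I would then read off from Lemma~\ref{lemord0}: two distinct equilibria in the same $\Lambda$-class, one of them interior, would be strictly ordered, contradicting the fact (Corollary~\ref{unord1}) that the stoichiometry class $\mathcal{C}_{\Lambda,c}^h$ is a $K(\Lambda)$-antichain. Combined with existence this gives exactly one equilibrium on each class, lying in $\mathrm{int}(\mathbb{R}^n_{\geq 0})$ when $h > H(z)$. The main obstacle is the interiority step: existence and uniqueness are essentially bookkeeping on top of Lemmas~\ref{haseq} and~\ref{lemord0}, but excluding boundary equilibria forces me to produce, at every intermediate level $h$, an interior point of the corresponding stoichiometry class so that the $\omega$-limit hypothesis can be invoked. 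The convex-combination construction $y^* = (1-t)z + tc$ is what makes this work, exploiting that $c$ is interior while $z$ is merely nonnegative; I would verify carefully that $t \in (0,1]$ corresponds exactly to $h \in (H(z), H(c)]$ and that the degenerate possibility $H(z) = H(c)$ (that is, $c = z$) leaves only the already-treated level $h = H(z)$.
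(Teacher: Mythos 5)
Your proof is correct and follows essentially the same route as the paper's: identify $\{z\}$ as a singleton stoichiometry class so that $z \in E$, seed Lemma~\ref{haseq} with $z$ for existence, pick an interior point on the segment from $z$ to $c$ at each level $h$ so the $\omega$-limit hypothesis excludes boundary equilibria, and conclude uniqueness from Lemma~\ref{lemord0} and the antichain property. Your explicit convex combination $y^* = (1-t)z + tc$ is just a concrete instantiation of the paper's appeal to continuity of $H$ on $(z,c]$ and ``basic properties of convex sets,'' so the two arguments coincide in substance.
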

\begin{proof}
Since stoichiometry classes are antichains (Corollary~\ref{unord1}), and $z \preceq \mathcal{C}_{\Lambda, c}$, $\{z\}$ must be an entire stoichiometry class, and consequently (since stoichiometry classes are invariant) $z$ is an equilibrium. Since $\mathcal{C}_{\Lambda, c}$ contains an equilibrium, by Lemma~\ref{haseq}, each stoichiometry class in $\mathcal{C}_{\Lambda, c}$ contains an equilibrium. Since $\mathcal{C}_{\Gamma,c}$ is a nontrivial stoichiometry class, by the assumption on $\omega$-limit sets of $\phi$ all equilibria in $\mathcal{C}_{\Gamma,c}$ lie in $\mathrm{int}(\mathbb{R}^n_{\geq 0})$. By Lemma~\ref{lemord0}, there is in fact a unique equilibrium in $\mathcal{C}_{\Gamma,c}$. In summary, $\mathcal{C}_{\Gamma,c}$ contains a unique equilibrium and this equilibrium lies in $\mathrm{int}(\mathbb{R}^n_{\geq 0})$.

Choose any $h \in (H(z), H(c)]$. By continuity of $H$, there exists $x \in (z, c]$ such that $H(x) = h$, i.e. $x \in \mathcal{C}_{\Lambda, c}^h$. By basic properties of convex sets, $x \in \mathrm{int}(\mathbb{R}^n_{\geq 0})$, and so $\mathcal{C}_{\Gamma, x}$ is nontrivial. Applying to $\mathcal{C}_{\Gamma, x}$ the argument applied to $\mathcal{C}_{\Gamma,c}$, $\mathcal{C}_{\Gamma, x}$ contains exactly one equilibrium, and this equilibrium is in $\mathrm{int}(\mathbb{R}^n_{\geq 0})$.
\end{proof}\\ 

\begin{lemma}
\label{homeo}
Consider any $c \in \mathrm{int}(\mathbb{R}^n_{\geq 0})$ and the nontrivial $\Lambda$-class $\mathcal{C}_{\Lambda, c}$ with the assumptions of Lemma~\ref{minimal1}. There exists a strictly increasing homeomorphism $\psi:[H(z), H(c)] \to \psi([H(z), H(c)]) \subseteq E$ and such that (i) $H(\psi(h)) = h$, (ii) $h_1 < h_2 \Rightarrow \psi(h_1) \llcurly \psi(h_2)$.
\end{lemma}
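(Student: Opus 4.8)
The plan is to globalise the construction in the proof of Lemma~\ref{localhomeo}, replacing the small interval $[H(e_0), H(e)]$ there by the full interval $[H(z), H(c)]$ and using Lemma~\ref{minimal1} to supply uniqueness of equilibria on each level set of $H$. Concretely, for each $h \in [H(z), H(c)]$ Lemma~\ref{minimal1} tells us that the stoichiometry class $\mathcal{C}_{\Lambda, c}^h$ contains exactly one equilibrium, and I would define $\psi(h)$ to be that equilibrium. Property (i), $H(\psi(h)) = h$, is then immediate from the definition of $\mathcal{C}_{\Lambda, c}^h$, and injectivity of $\psi$ follows at once since distinct values of $h$ give equilibria with distinct $H$-values. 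The inverse $\psi^{-1}$ is just the restriction of the continuous linear map $H$ to $\mathrm{Im}(\psi)$, hence continuous.

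The substantive step is continuity of $\psi$, which I would establish exactly as in Lemma~\ref{localhomeo} by a compactness-plus-uniqueness argument. Fix $h$ and a sequence $h_i \to h$ in $[H(z), H(c)]$, and set $e_i = \psi(h_i)$. First I would confine $(e_i)$ to a compact set: since $z \preceq \mathcal{C}_{\Lambda, c}$ we have $e_i - z \in K(\Lambda)$, so with $R = \inf_{y \in K(\Lambda), |y| = 1} p_\Theta^T y > 0$ (as in Lemma~\ref{lembounded}) one gets $|e_i - z| \le (H(e_i) - H(z))/R \le (H(c) - H(z))/R$, bounding all the $e_i$; together with closedness of $E$, of $\{y \succeq z\}$, and of the half-space $\{y : H(y) \le H(c)\}$, this traps the $e_i$ in a compact set. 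Any convergent subsequence $e_{i_k} \to \tilde e$ then satisfies $\tilde e \in E$ (closure of $E$), $\tilde e \sim^\Lambda c$ (closure of $\mathrm{Im}(\Lambda)$), and $H(\tilde e) = h$ (continuity of $H$), so $\tilde e$ is an equilibrium in $\mathcal{C}_{\Lambda, c}^h$; by the uniqueness in Lemma~\ref{minimal1}, $\tilde e = \psi(h)$. Since every convergent subsequence of the bounded sequence $(e_i)$ has the same limit $\psi(h)$, we conclude $e_i \to \psi(h)$, giving continuity.

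For the strict order property (ii), I would take $h_1 < h_2$ and observe that $\psi(h_1), \psi(h_2)$ are distinct (distinct $H$-values), lie in the same $\Lambda$-class, and at least one lies in $\mathrm{int}(\mathbb{R}^n_{\geq 0})$: indeed for $h_2 > H(z)$, Lemma~\ref{minimal1} places $\psi(h_2)$ in the interior. Lemma~\ref{lemord0} then gives the dichotomy $\psi(h_1) \llcurly \psi(h_2)$ or $\psi(h_2) \llcurly \psi(h_1)$; the second alternative would force $H(\psi(h_2)) < H(\psi(h_1))$ by Lemma~\ref{inc}, i.e. $h_2 < h_1$, a contradiction, so $\psi(h_1) \llcurly \psi(h_2)$. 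This also covers the endpoint $h_1 = H(z)$, where $\psi(H(z)) = z$, since there only $\psi(h_2)$ need be interior. Strict monotonicity in particular re-confirms injectivity and shows $\psi$ is increasing, completing the proof that $\psi$ is a strictly increasing homeomorphism onto its image in $E$.

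I expect the continuity of $\psi$ to be the only step requiring real care — specifically, pinning down the compact set that traps the $(e_i)$. Unlike in Lemma~\ref{localhomeo}, we cannot bound the equilibria above by an order interval $[[e_0, e]]$, and must instead combine the global lower bound $z$ with the level constraint $H \le H(c)$ via the $R$-estimate; everything else is a direct transcription of the local argument.
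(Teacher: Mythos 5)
Your proposal is correct and follows essentially the same route as the paper: define $\psi(h)$ to be the unique equilibrium on $\mathcal{C}_{\Lambda,c}^h$ supplied by Lemma~\ref{minimal1}, prove continuity of $\psi$ by the compactness-plus-uniqueness argument of Lemma~\ref{localhomeo} (with $\psi^{-1}$ continuous as the restriction of $H$), and obtain the strict ordering (ii) from Lemma~\ref{lemord0}, using Lemma~\ref{inc} to exclude the reversed alternative. The only cosmetic difference is your boundedness estimate via the constant $R$ of Lemma~\ref{lembounded}; note the paper's order-interval bound would in fact also survive globalisation, since Lemmas~\ref{lemord0} and~\ref{inc} place every $\psi(h_i)$ in $[[z, \psi(H(c))]]$, but your estimate is an equally valid and somewhat more self-contained substitute.
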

\begin{proof}
By Lemma~\ref{minimal1} for each $h \in [H(z), H(c)]$, $\mathcal{C}_{\Lambda, c}^h$ contains a unique equilibrium $e_h$ and for $h \in (H(z), H(c)]$, $e_h \in \mathrm{int}(\mathbb{R}^n_{\geq 0})$. Defining $\psi$ by $\psi(h) = e_h$ it is clear that $\psi$ is a bijection. Continuity of $\psi$ and $\psi^{-1}$ now follow as in Lemma~\ref{localhomeo}. That $h_1 < h_2 \Rightarrow \psi(h_1) \llcurly \psi(h_2)$ follows from Lemma~\ref{lemord0}.
\end{proof}\\

We are now finally ready to prove the main global convergence result.

{\par{\it Proof of Theorem \ref{mainthm}}. \ignorespaces}
Consider any $c \in \mathrm{int}(\mathbb{R}^n_{\geq 0})$, the nontrivial stoichiometry class $\mathcal{C}_{\Gamma,c}$ and the nontrivial $\Lambda$-class $\mathcal{C}_{\Lambda, c}$. By Lemma~\ref{minimal}, assumptions A3 and A5 imply that there exists $z \in \mathcal{C}_{\Lambda, c}$ with $z \preceq \mathcal{C}_{\Lambda, c}$. By condition A6, for $y \in \mathrm{int}(\mathbb{R}^n_{\geq 0})$, $\phi$ has no $\omega$-limit points on $\mathcal{C}_{\Gamma,y} \cap \partial\,\mathbb{R}^n_{\geq 0}$. Thus Lemma~\ref{minimal1} applies: for each $h \in [H(z), H(c)]$, $\mathcal{C}_{\Lambda, c}^h$ contains exactly one equilibrium $e_h$ and for $h \in (H(z), H(c)]$, $e_h \in \mathrm{int}(\mathbb{R}^n_{\geq 0})$. Let $e \equiv e_{H(c)}$ be the equilibrium in $\mathcal{C}_{\Gamma,c}$. 

By Lemma~\ref{homeo}, the map $\psi:[H(z), H(c)] \to \psi([H(z), H(c)]) \equiv E_{z,c} \subseteq E$ defined by $\psi(h) = e_h$ is a strictly increasing homeomorphism. We now follow the proof of Theorem~\ref{mainthm0} with $e_0 = z$. Since $z \preceq \mathcal{C}_{\Lambda, c}$, $P(z, c) = (z + K(\Lambda)) \cap \mathcal{C}_{\Gamma,c} = \mathcal{C}_{\Gamma,c}$. One immediate consequence is that $\mathcal{C}_{\Gamma,c}$ is bounded by Lemma~\ref{compconv}. Thus given any $y \in \mathcal{C}_{\Gamma,c}$, $\omega(y)$ (the $\omega$-limit set of $y$) exists.

For $y \in \mathcal{C}_{\Gamma,c}$ we define $Q(y) = (y-\mathrm{relbd}\,K) \cap E_{z,c}$ and construct the Liapunov function $L(\cdot) = H(Q(\cdot))$ defined on all of $\mathcal{C}_{\Gamma,c}$. That $Q$ is well defined, $L(e) > L(y)$ for $y \in \mathcal{C}_{\Gamma,c}\backslash\{e\}$, $Q$, and hence $L$, are continuous on $\mathcal{C}_{\Gamma,c}$ follow as in the proof of Theorem~\ref{mainthm0}. Note that $L$ is strictly increasing, namely $L(\phi_t(y)) > L(y)$ for $t>0$, if $y \in (\mathcal{C}_{\Gamma,c}\backslash\{e\}) \cap \mathrm{int}(\mathbb{R}^n_{\geq 0})$, but if $y \in\mathcal{C}_{\Gamma,c} \cap \partial \mathbb{R}^n_{\geq 0}$ we can only claim that $L$ is nondecreasing, namely $L(\phi_t(y)) \geq L(y)$ for $t>0$: it may occur that $Q(y) = z$, in which case the entire segment $[z,y]$ may lie in $\partial \mathbb{R}^n$ and we only have $\phi_t(y) \succeq \phi_t(z) = z$. Thus $\omega(y)$ lies in $\{e\} \cup (\mathcal{C}_{\Gamma,c} \cap \partial \mathbb{R}^n_{\geq 0})$. But by condition A6, $\phi$ has no $\omega$-limit points on $\partial \mathbb{R}^n_{\geq 0}$, and so $\omega(y) = \{e\}$. Thus all initial conditions in $\mathcal{C}_{\Gamma,c}$ are attracted to $e$.  
\endproof

\section{Conclusions}

A class of CRNs has been described with strong convergence properties under only weak kinetic assumptions. The networks in this class are defined primarily by the existence of a certain factorisation of their Jacobian matrices, and strong connectedness of their DSR graphs. Roughly speaking, the convergence properties of these CRNs spring from the combination of monotonicity of the associated dynamical systems and the existence of integrals of motion. 

A natural question is how one can identify systems of reactions which fall into the class described in this paper. Such identification would begin with deciding whether a matrix $\Gamma$ admits a factorisation as in condition~A3: we sketch how an algorithm for this purpose would proceed. The starting point is to identify collinear rows of $\Gamma$. A partition of the row-set of $\Gamma$ into $r$ maximal collinear subsets provides a candidate for a first factor $\Lambda$ of the correct form, i.e., as in condition A3(i). Given such a $\Lambda$, the second factor $\Theta$ is uniquely determined (since $\mathrm{ker}\,\Lambda$ is trivial), and it can be checked whether $\mathrm{ker}\,\Theta^T$ is one-dimensional, and if so, whether it intersects the interior of some orthant $\mathcal{O}$ in $\mathbb{R}^r$. If this is the case, but $\mathcal{O} \neq \mathbb{R}^r_{\geq 0}$, then defining $S$ to be the diagonal matrix with diagonal entries in $\{-1, 1\}$ which maps $\mathcal{O}$ to $\mathbb{R}^r_{\geq 0}$, we examine $(\Lambda S)\,(S\Theta)$, and check whether $S\Theta$ has at most one positive entry and at most one negative entry in each column. If this is the case, then $(\Lambda S)\,(S\Theta)$ now provides the desired factorisation.

For the class discussed here, monotonicity is with respect to an order defined by a cone with linearly independent extremal vectors. An interesting question is whether it is possible to extend the theory to more general cones thus progressing with the program of identifying CRNs with simple behaviour: while several of the proofs here were simplified by the fact that $\Lambda$-classes were lattices, the results in \cite{banajiangeli,banajimierczynski} suggest that this may not be crucial to the geometric argument.

Finally, it was observed that Example 1 fell into a class which can also be proved to be monotone in ``reaction coordinates'' \cite{angelileenheersontag}. An interesting theme for future work is to work towards a synthesis of the approaches to monotonicity of CRNs in normal (``species'') coordinates and reaction coordinates. 

\section*{Acknowledgements}
The research of both authors was supported by EPSRC grant EP/J008826/1 ``Stability and order preservation in chemical reaction networks''. 

\appendix

\section{Qualitative classes of matrices}
\label{appqualclass}
A real matrix $M$ determines the {\bf qualitative class} $\mathcal{Q}(M)$ consisting of all matrices with the same sign pattern as $M$. Explicitly, $X \in \mathcal{Q}(M)$ if and only if i) $X$ has the same dimensions as $M$, ii) $(M_{ij} > 0) \Rightarrow (X_{ij} > 0)$, iii) $(M_{ij} < 0) \Rightarrow (X_{ij} < 0)$ and iv) $(M_{ij} = 0) \Rightarrow (X_{ij} = 0)$. \\

The closure of $\mathcal{Q}(M)$ is denoted by $\mathcal{Q}_0(M)$, namely $X \in \mathcal{Q}_0(M)$ if and only if i) $X$ has the same dimensions as $M$, ii) $(M_{ij} > 0) \Rightarrow (X_{ij} \geq 0)$, iii) $(M_{ij} < 0) \Rightarrow (X_{ij} \leq 0)$ and iv) $(M_{ij} = 0) \Rightarrow (X_{ij} = 0)$. \\

$\mathcal{Q}_1(M)$ is defined by deleting iv) from the defining properties of $\mathcal{Q}_0(M)$.

\section{Kinetic assumptions}
\label{appkinetic}
Closely following \cite{banajimierczynski} the following natural assumptions are made about the kinetics, namely about the function $v(x)$ in (\ref{eqmain}). $\mathcal{I}_{j, l}$ is the set of indices of chemicals occurring on the left of reaction $j$ and $\mathcal{I}_{j, r}$ is the set of indices of the chemicals occurring on the right of reaction $j$.
\begin{enumerate}
\item[(K1)]
$\Gamma_{ij}(\partial v_j/\partial x_i) \leq 0$, and if $\Gamma_{ij} = 0$ then $\partial v_j/\partial x_i = 0$. More briefly, in the notation of the previous appendix, $\left[\frac{\partial v_i(x)}{\partial x_j}\right] \in \mathcal{Q}_0(-\Gamma^\mathrm{T})$ at each $x$. This condition has been discussed before in \cite{banajiSIAM,leenheer} for example, and is satisfied by all reasonable kinetics provided no chemical occurs on both sides of
a reaction.

\item[(K2)]
If reaction $j$ is irreversible then
\begin{enumerate}
\item[(i)]
$v_j \geq 0$ with $v_j = 0$ if and only if $x_i = 0$ for some $i \in \mathcal{I}_{j, l}$.
\item[(ii)]
If $x_i > 0$ for all $i \in \mathcal{I}_{j,l}$, then $\partial v_j/\partial x_i > 0$ for each $i \in \mathcal{I}_{j,l}$.
\end{enumerate}
\item[(K3)]
If reaction $j$ is reversible then
\begin{enumerate}
\item[(i)]
If $x_{i} = 0$ for some $i \in \mathcal{I}_{j,l}$ (resp. for some $i \in \mathcal{I}_{j,r}$) then $v_j \leq 0$ (resp. $v_j \geq 0$).
\item[(ii)]
If $x_{i} = 0$ for some $i \in \mathcal{I}_{j,l}$ (resp. for some $i \in \mathcal{I}_{j,r}$), then $v_j < 0$ (resp. $v_j > 0$) if and only if $x_{i'} > 0$ for each $i' \in \mathcal{I}_{j,r}$ (resp. for each $i' \in \mathcal{I}_{j,l}$).
\item[(iii)]
If $x_i > 0$ for all $i \in \mathcal{I}_{j,l}$ (resp. for all $i \in \mathcal{I}_{j,r}$) then, for each $i \in \mathcal{I}_{j,l}$ (resp. $i \in \mathcal{I}_{j,r}$),
$\partial v_j(x)/\partial x_i > 0$ (resp. $\partial v_j(x)/\partial x_i < 0$). 
\end{enumerate}
\end{enumerate}
One consequence of K3(iii) is that for a reversible system of reactions $\left[\frac{\partial v_i(x)}{\partial x_j}\right] \in \mathcal{Q}(-\Gamma^\mathrm{T})$ at each $x \in \mathrm{int}(\mathbb{R}^n_{\geq 0})$.

\section{Persistence in the reversible case}
\label{appsiphon}
Given nonempty $S \subsetneq \{1, \ldots, n\}$, define $P^S$ as the  $n \times n$ projection matrix which maps the nonnegative orthant to $\overline{F}_S$, the closure of the elementary face $F_S \subseteq \mathbb{R}^n_{\geq 0}$ (i.e., $P^S_{ij} = 1$ if $i=j \in S$ and $P^S_{ij} = 0$ otherwise). A {\bf mixed-column matrix} is a matrix such that each nonzero column contains both a positive and a negative entry. We note that as shown in \cite{banajimierczynski} condition A2 implies that:
\begin{enumerate}
\item An elementary face $F_S$ of $\mathbb{R}^n_{\geq 0}$ is either repelling or the vector field on $F_S$ is everywhere tangent to $F_S$. In the latter case the set $S^c$ is termed a {\bf siphon} \cite{angelipetrinet, shiusturmfels}. The face $F_{S^c}$ associated with a siphon $S$ will be termed a {\bf siphon face}.

\item For a system of reversible reactions with no chemical appearing on both sides of any reaction, $F_S$ fails to be repelling if and only if $(I-P^{S})\Gamma$ is a mixed-column matrix. In intuitive terms, if the concentration of chemical $i$ from one side of reversible reaction $j$ is zero on $F_S$, while concentrations of all chemicals on the other side of reaction $j$ are nonzero on $F_S$, then $F_S$ must be repelling since chemical $i$ is being produced by reaction $j$. This occurrence manifests as $(I-P^{S})\Gamma$ failing to be a mixed-column matrix. 
\end{enumerate}

\begin{lemma}
\label{lemstoichclass}
Let $S$ be a nonempty proper subset of $\{1, \ldots, n\}$. Given an $n \times m$ matrix $\Gamma$, suppose there exists $w$ satisfying $\Gamma^T(I-P^S)w = 0$ and $(I-P^S)w > 0$. Then for each $c \in \mathrm{int}(\mathbb{R}^n_{\geq 0})$, $\mathcal{C}_{\Gamma,c}$ does not intersect $\overline{F}_S$.
\end{lemma}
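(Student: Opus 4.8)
The plan is to extract from the hypothesis $\Gamma^T(I-P^S)w = 0$ a conserved linear functional that takes strictly positive values at interior points but vanishes on all of $\overline{F}_S$, thereby separating $c$ from the face. First I would set $u = (I-P^S)w$ and record its structural properties. By the definition of $P^S$, the vector $u$ is supported on $S^c$, i.e., $u_i = 0$ for $i \in S$; the assumption $(I-P^S)w > 0$ says exactly that $u$ is nonnegative and nonzero, so $u_i \geq 0$ for $i \in S^c$ with at least one strict inequality; and $\Gamma^T u = 0$ places $u$ in $\ker(\Gamma^T) = (\mathrm{Im}\,\Gamma)^\perp$.

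The next observation is that $x \mapsto u^Tx$ is constant on each stoichiometry class. Indeed, if $x, y$ lie in a common stoichiometry class then $x-y \in \mathrm{Im}\,\Gamma$, and since $u \perp \mathrm{Im}\,\Gamma$ we get $u^T(x-y) = 0$. Thus $u^T$ is a nonnegative, $S^c$-supported conservation law for the system.

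I would then argue by contradiction. Suppose some $y \in \mathcal{C}_{\Gamma,c} \cap \overline{F}_S$ exists. On one hand, $y \in \overline{F}_S$ forces $y_i = 0$ for $i \in S^c$, and since $u$ is supported on $S^c$ this gives $u^Ty = \sum_{i \in S^c} u_i y_i = 0$. On the other hand, $y \in \mathcal{C}_{\Gamma,c}$ and the conservation law give $u^Ty = u^Tc$; but $c \gg 0$ together with $u \geq 0$, $u \neq 0$ supported on $S^c$ yields $u^Tc = \sum_{i \in S^c} u_i c_i > 0$, since at least one term has $u_i > 0$ and $c_i > 0$. These two evaluations of $u^Ty$ are incompatible, so no such $y$ exists and $\mathcal{C}_{\Gamma,c}$ is disjoint from $\overline{F}_S$.

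Once $u$ is introduced the argument is essentially a bookkeeping exercise, so I do not anticipate a serious obstacle. The only point demanding care is the sign-and-support accounting: one must verify that the support of $u$ lies entirely in $S^c$ (so it annihilates any vector vanishing on $S^c$, in particular every point of $\overline{F}_S$) while its pairing with the strictly positive vector $c$ is \emph{strictly} positive (so strictness propagates from a single nonzero coordinate of $u$). Both facts follow immediately from the definition of $P^S$ and the hypothesis $(I-P^S)w > 0$.
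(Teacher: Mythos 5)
Your proof is correct and is essentially the paper's own argument: the functional $u^T = w^T(I-P^S)$ you construct is exactly what the paper pairs against the intersection equation $c + \Gamma y = P^S z$, and both arguments derive the same contradiction $0 = u^Tc > 0$. The only cosmetic difference is that you phrase it as a nonnegative conservation law supported on $S^c$, while the paper phrases it as a direct matrix computation.
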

\begin{proof}
$\mathcal{C}_{\Gamma,c}$ intersects $\overline{F}_S$ if and only if there exist $y \in \mathbb{R}^m, z \in \mathbb{R}^n_{\geq 0}$ which solve the equation
\[
c+\Gamma y = P^Sz.
\]
Left-multiplying both sides of the above equation by $w^T(I-P^S)$ gives
\[
w^T(I-P^S)c+w^T(I-P^S)\Gamma y = w^T(I-P^S)c = w^T(I-P^S)P^Sz = 0
\]
Since $c \gg 0$ and $(I-P^S)w > 0$ this is a contradiction, and so $\mathcal{C}_{\Gamma,c} \cap \overline{F}_S = \emptyset$. 
\end{proof}\\

The above lemma tells us that if there exists $w$ such that $(I-P^S)w > 0$ and $\Gamma^T(I-P^S)w = 0$ then no nontrivial stoichiometry class intersects $\overline{F}_S$.

\vspace{0.5cm}

\begin{lemma}
\label{lemsiphon1}
Suppose that $\Lambda$ and $\Theta$ are matrices satisfying the conditions in A3, and  $Q$ is a projection which takes $\mathbb{R}^n_{\geq 0}$ to some nontrivial (closed) face of $\mathbb{R}^n_{\geq 0}$. If $J = Q\Lambda \Theta$ is a mixed-column matrix, then there exists $w$ satisfying $Qw > 0$ and $\Theta^T\Lambda^TQw=0$. 
\end{lemma}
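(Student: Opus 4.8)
The plan is to reduce the claim to a statement about nonnegative solutions of a homogeneous linear system and then to decode the mixed-column hypothesis combinatorially. Write $Q = P^T$ for the nonempty index set $T \subsetneq \{1,\dots,n\}$ with $Q$ projecting onto $\overline{F}_T$. Since $Q$ is a symmetric idempotent, $\Theta^T\Lambda^T Q w$ depends on $w$ only through $u := Qw$, and $Qu = u$; hence it suffices to produce a vector $u \geq 0$, $u \neq 0$, supported on $T$, with $\Theta^T\Lambda^T u = 0$, and then take $w = u$. Because condition A3(ii) gives $\mathrm{ker}(\Theta^T) = \mathrm{span}\{y_\Theta\}$ with $y_\Theta \gg 0$, the equation $\Theta^T\Lambda^T u = 0$ is equivalent to $\Lambda^T u = \mu\, y_\Theta$ for some scalar $\mu$. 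So the whole task is to find a nonzero $u \in \mathbb{R}^n_{\geq 0}$ supported on $T$ whose image $\Lambda^T u$ is a (possibly zero) multiple of $y_\Theta$.

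The structure of $\Lambda$ makes the set of attainable images transparent. By condition A3(i) each row $i$ has a single nonzero entry $\lambda_i$, lying in a column $c(i)$; consequently the columns of $\Lambda$ have pairwise disjoint supports partitioning $\{1,\dots,n\}$, and $\Lambda^T \mathbf{e}_i = \lambda_i\, \mathbf{f}_{c(i)}$, where $\mathbf{f}_k$ is the $k$-th standard basis vector of $\mathbb{R}^r$. Thus $\{\Lambda^T u : u \geq 0,\ \mathrm{supp}(u)\subseteq T\}$ is the product over $k$ of the one-dimensional cone in coordinate $k$ generated by the signs $\{\mathrm{sign}(\lambda_i) : i \in T,\ c(i) = k\}$. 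Classifying each column $k$ of $\Lambda$ by its \emph{type} relative to $T$ (the entries $\lambda_i$ with $i\in T$, $c(i)=k$ being all positive, all negative, of both signs, or absent) this cone is exactly a box $\prod_k C_k$ with $C_k$ equal to $\mathbb{R}_{\geq 0}$, $\mathbb{R}_{\leq 0}$, $\mathbb{R}$, or $\{0\}$ respectively. Since $y_\Theta \gg 0$, a multiple $\mu y_\Theta$ with $\mu > 0$ lies in this box iff every column has type $+$ or $\pm$; one with $\mu < 0$ iff every column has type $-$ or $\pm$; and a nonzero kernel vector ($\mu=0$) exists iff some column has type $\pm$. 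In each situation an explicit $u$ can be assembled coordinate block by coordinate block using the disjointness of the supports.

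It remains to show the mixed-column hypothesis forces one of these three situations. Here I would use that each column $j$ of $\Theta$ has exactly one positive entry (row $p$) and one negative entry (row $q$), so $J_j = \Theta_{pj}\,Q\Lambda_p - |\Theta_{qj}|\,Q\Lambda_q$ is carried by the disjoint supports of columns $p$ and $q$ of $\Lambda$, and reading off its signs in terms of the types of $p$ and $q$ yields a short finite case analysis. The upshot is that if no column of $\Lambda$ has type $\pm$, then the mixed-column condition forbids every edge joining columns of different types: a nonzero $J_j$ whose columns $p,q$ have distinct types in $\{+,-,0\}$ is always single-signed, contradicting mixedness. I expect this sign bookkeeping to be the main obstacle, since it is easy to mishandle the interplay of the two signs contributed by $\Theta$ with the signs already present in $\Lambda$. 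Finally, one-dimensionality of $\mathrm{ker}(\Theta^T)$ is equivalent to connectedness of the graph whose vertices are the columns of $\Lambda$ and whose edges are the columns of $\Theta$; together with the no-cross-type-edge conclusion this forces all columns to share a single type. The all-absent type is impossible because $T \neq \emptyset$ makes at least one column meet $T$, so either every column has type $+$ or every column has type $-$, placing us in one of the three solvable cases and completing the construction of $u$ and hence of $w$.
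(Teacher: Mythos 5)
Your proposal is correct and takes essentially the same route as the paper's own proof: your three solvable cases (some column of $\Lambda$ of type $\pm$, all columns of type $+$ or $\pm$, all of type $-$ or $\pm$) are exactly the paper's three cases (a nonnegative kernel vector of $\Lambda^TQ$ built from two opposite-signed columns, or $K(\Lambda^TQ) \supseteq \mathbb{R}^r_{\geq 0}$, or $K(\Lambda^TQ) \supseteq \mathbb{R}^r_{\leq 0}$), and your connectivity/no-cross-type-edge argument is the paper's observation that otherwise some column of $\Theta$ joins differently-signed blocks (else $\Theta$ is block-diagonal, and $y_\Theta$ restricted to a block contradicts one-dimensionality of $\mathrm{ker}(\Theta^T)$). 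The packaging differs only cosmetically -- you argue directly from the mixed-column hypothesis where the paper argues contrapositively, and your ``box'' description of the attainable set $\{\Lambda^TQw : Qw \geq 0\}$ makes the case split slightly more explicit.
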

\begin{proof}
Suppose some column of $\Lambda^TQ$ is a nonzero multiple of a unit vector $\hat e_k$ and another is an oppositely signed multiple of $\hat e_k$. In particular, suppose $(\Lambda^TQ)_i = t_1\hat e_k$, $(\Lambda^TQ)_j = -t_2\hat e_k$, where $t_1, t_2 > 0$. Clearly $Q\hat e_i = \hat e_i$ and $Q\hat e_j=\hat e_j$ (otherwise $(\Lambda^TQ)_i = 0$, resp. $(\Lambda^TQ)_j = 0$), and the vector $w = t_2\hat e_i + t_1 \hat e_j$ satisfies $Qw = w > 0$ and $\Lambda^TQw = t_2t_1\hat e_k - t_1t_2\hat e_k = 0$. 

Suppose $K(\Lambda^TQ) \supseteq \mathbb{R}^r_{\geq 0}$ or $K(\Lambda^TQ) \supseteq \mathbb{R}^r_{\leq 0}$. If $K(\Lambda^TQ) \supseteq \mathbb{R}^r_{\geq 0}$, then for some $w>0$, $\Lambda^TQw = y_\Theta$, where $0 \ll y_\Theta \in \mathrm{ker}\,\Theta^T$ and so $\Theta^T\Lambda^TQw=0$. Similarly if $K(\Lambda^TQ) \supseteq \mathbb{R}^r_{\leq 0}$, then for some $w>0$, $\Lambda^TQw = -y_\Theta$, and so $\Theta^T\Lambda^TQw=0$.

Thus if there exists no $w$ satisfying $Qw > 0$, $\Theta^T\Lambda^TQw=0$, then each of the following must be true:
\begin{enumerate}
\item Given a column of $\Lambda^TQ$ of the form $t_1\hat e_j$ where $t_1 \not = 0$, there is no column of $\Lambda^TQ$ of the form $t_2\hat e_j$ with $t_1t_2 <0$. 
\item $K(\Lambda^TQ) \not \supseteq \mathbb{R}^r_{\geq 0}$ and $K(\Lambda^TQ) \not \supseteq \mathbb{R}^r_{\leq 0}$.
\end{enumerate}
These together imply that each column of $Q\Lambda$ has fixed sign (i.e., for each $i$, either $(Q\Lambda)_i = 0$, or $(Q\Lambda)_i < 0$, or $(Q\Lambda)_i > 0$), and there exist two columns with different sign. More precisely, defining $S_0, S_+, S_{-} \subseteq \{1, \ldots, r\}$ by $i \in S_0 \Leftrightarrow$ $(Q\Lambda)_i = 0$, $i \in S_+ \Leftrightarrow$ $(Q\Lambda)_i \in \mathbb{R}^n_{\geq 0} \backslash\{0\}$, and $i \in S_- \Leftrightarrow$ $(Q\Lambda)_i \in \mathbb{R}^n_{\leq 0} \backslash\{0\}$, then $\{S_0, S_+, S_{-}\}$ is a partition of $\{1, \ldots, r\}$ and at least two of $S_0, S_+, S_{-}$ are nonempty. By the definition of $\Theta$ there exists some column of $\Theta$, say column $k$, containing nonzero entries $\Theta_{ik}$ and $\Theta_{jk}$ such that $i,j$ are not both in the same member of the partition $\{S_0, S_+, S_{-}\}$ (i.e., such that $(Q\Lambda)_i$ and $(Q\Lambda)_j$ are of different sign in the sense defined above). If this were not the case, then $\Theta$ could be written in block-diagonal form, in which case it is easy to show that $\mathrm{ker}\,\Theta^T$ contains a nonnegative vector which is not a multiple of $y_\Theta$. But then $(Q\Lambda\Theta)_k = \Theta_{ik}(Q\Lambda)_i + \Theta_{jk}(Q\Lambda)_j$ is a nonzero vector all of whose nonzero entries are of the same sign, and hence $Q\Lambda\Theta$ fails to be mixed-column matrix. 
\end{proof}\\

{\par{\it Proof of Lemma \ref{lemsiphon}}. \ignorespaces}
If $F_S$ fails to be repelling and the reactions are reversible, then $(I-P^S)\Gamma$ is a mixed-column matrix. Applying Lemma~\ref{lemsiphon1} with $Q = I-P^S$, there exists $w$ satisfying $(I-P^S)w > 0$ and $\Gamma^T(I-P^S)w = 0$. But then, by Lemma~\ref{lemstoichclass}, $\mathcal{C}_{\Gamma,c}$ does not intersect $\overline{F}_S$ for any $c \gg 0$. In other words no nontrivial stoichiometry class intersects $F_S$. 
\endproof\\

We illustrate the preceding lemmas with an example. Consider the matrices
\[
\Lambda = \left(\begin{array}{rcc}1&0&0\\0&1&0\\1&0&0\\-1&0&0\\0&0&1\end{array}\right), \,\, \Theta = \left(\begin{array}{rrr}1&0&1\\-1&1&0\\0&-2&-2\end{array}\right), \,\, \Gamma = \Lambda\Theta = \left(\begin{array}{rrr}1&0&1\\-1&1&0\\1&0&1\\-1&0&-1\\0&-2&-2\end{array}\right).
\]
Consider the pairs
\[
\begin{array}{ll}
S_1=\{2,3,5\} & w_1=(1,0,0,1,0)^T\\
S_2=\{1,2,5\} & w_2=(0,0,1,1,0)^T\\
S_3=\{2,5\} & w_3=(1,0,1,2,0)^T\\
S_4=\{3,4\} & w_4=(2,2,0,0,1)^T\\
S_5=\{1,4\} & w_5=(0,2,2,0,1)^T\\
S_6=\{4\} & w_6=(1,2,1,0,1)^T\\
S_7=\{3\} & w_7=(3,2,0,1,1)^T\\
S_8=\{1\} & w_8=(0,2,3,1,1)^T
\end{array}
\]
For each $S_i \subseteq \{1,2,3,4,5\}$ listed above, $(I-P^{S_i})\Gamma$ is a mixed-column matrix, and these are the only proper nonempty subsets of $\{1,2,3,4,5\}$ with this property. Consider for example the set $S_1=\{2,3,5\}$, giving
\[
(I-P^{S_1})\Gamma = \left(\begin{array}{ccccc}1&0&0&0&0\\0&0&0&0&0\\0&0&0&0&0\\0&0&0&1&0\\0&0&0&0&0\end{array}\right)\left(\begin{array}{rrr}1&0&1\\-1&1&0\\1&0&1\\-1&0&-1\\0&-2&-2\end{array}\right) = \left(\begin{array}{rrr}1&0&1\\0&0&0\\0&0&0\\-1&0&-1\\0&0&0\end{array}\right)
\]
which is clearly a mixed-column matrix. Further, in each case $w_i$ satisfies $(I-P^{S_i})w_i > 0$ and $\Gamma^T(I-P^{S_i})w_i = 0$. Thus, as predicted by Lemma~\ref{lemsiphon1}, whenever $(I-P^{S})\Gamma$ is a mixed-column matrix, there exists a corresponding vector $r$, satisfying $(I-P^{S})r > 0$ and $\Gamma^T(I-P^{S})r = 0$. Note that in the first three cases $(I-P^{S_i})w_i \in \mathrm{ker}\,\Lambda^T$, while in the rest $\Lambda^T(I-P^{S_i})w_i = (2,2,1)^T \in \mathrm{ker}\,\Theta^T$. 

\section{Checking condition A6(ii) for Example 3}
\label{appEx3}

For the class of kinetics treated here, as described in \cite{angelipetrinet}, siphons can be identified with sets of chemicals for which each ``input reaction'' is also an ``output reaction''; in other words $S$ is a siphon if and only if for each $i \in S$ every reaction able to produce species $i$ requires some chemical $j \in S$. Similar to the process in \cite{angelipetrinet} using Petri net graphs, siphons can readily be identified from examination of the DSR graph, provided that arcs are signed. 

A siphon is {\bf minimal} if it does not contain (strictly) any other siphon. Clearly every siphon face lies in the closure of at least one siphon face associated with a minimal siphon: so if we can identify each minimal siphon $\Sigma$ and prove that nontrivial stoichiometry classes do not intersect $\overline{F}_{\Sigma^c}$, this ensures that nontrivial stoichiometry classes intersect no siphon faces at all. 

Recall that for Example 3 the stoichiometric matrix takes the form:
\[
\Gamma = \left(\begin{array}{rrrr}-1 & 0 & 0 & 1\\-1 & 1 & 0 & 0\\1 & -1 & 0 & 0\\0 & 1 & -1 & 0\\0 & 0 & -1 & 1\\0 & 0 & 1 & -1\end{array}\right)
\]
corresponding to the substrate ordering $[S_1,E,ES_1,S_2,F,FS_2]$. In this case there are exactly three minimal siphons: $\Sigma_1 = \{2,3\}$ corresponding to substrates $\{E, ES_1\}$; $\Sigma_2 = \{5,6\}$ corresponding to substrates $\{F, FS_2\}$; and $\Sigma_3 = \{1,3,4,6\}$ corresponding to substrates $\{S_1, ES_1,S_2, FS_2\}$. To proceed, note that some nontrivial stoichiometry class intersects the closed face $\overline{F}_{\Sigma_i^c}$ if and only if there exists $c \in \mathrm{int}(\mathbb{R}^6_{\geq 0})$ and $z \in \mathbb{R}^6_{\geq 0}$ such that
\[
c + \Gamma y = P^{\Sigma^c_i}z.
\]
However if such $c \in \mathrm{int}(\mathbb{R}^6_{\geq 0})$ and $z \in \mathbb{R}^6_{\geq 0}$ exist, then defining $v_1 = (0,1,1,0,0,0)^T$, $v_2 = (0,0,0,0,1,1)^T$ and  $v_3 = (1,0,1,1,0,1)^T$, we obtain the following contradictions:
\[
0 < v_i^Tc = v_i^Tc + v_i^T\Gamma y = v_i^TP^{\Sigma^c_i}z = 0 \quad (i = 1,2,3).
\]
These contradictions confirm that the equations $c + \Gamma y = P^{\Sigma_i^c}z$ cannot be satisfied for any $c \gg 0$ and $z \geq 0$, and thus no nontrivial stoichiometry class intersects any of the three closed faces $\overline{F}_{\Sigma_i^c}$. 

\bibliographystyle{siam}

\end{document}